\theoremstyle{plain}
\newtheorem{thrm}{Theorem}[section]
\newtheorem{lemma}[thrm]{Lemma}
\newtheorem{rmrk}[thrm]{Remark}
\begin{document}
\newcommand{\sn}{\mathbb{S}^{n-1}}
\newcommand{\SL}{\mathcal L^{1,p}( D)}
\newcommand{\Lp}{L^p( Dega)}
\newcommand{\CO}{C^\infty_0( \Omega)}
\newcommand{\Rn}{\mathbb R^n}
\newcommand{\Rm}{\mathbb R^m}
\newcommand{\R}{\mathbb R}
\newcommand{\Om}{\Omega}
\newcommand{\Hn}{\mathbb H^n}
\newcommand{\A}{\alpha }
\newcommand{\B}{\beta}
\newcommand{\eps}{\ve}
\newcommand{\BVX}{BV_X(\Omega)}
\newcommand{\p}{\partial}
\newcommand{\IO}{\int_\Omega}
\newcommand{\bG}{\boldsymbol{G}}
\newcommand{\bg}{\mathfrak g}
\newcommand{\bz}{\mathfrak z}
\newcommand{\bv}{\mathfrak v}
\newcommand{\Bux}{\mbox{Box}}
\newcommand{\e}{\ve}
\newcommand{\X}{\mathcal X}
\newcommand{\Y}{\mathcal Y}
\newcommand{\Z}{\mathcal Z}
\newcommand{\la}{\lambda}
\newcommand{\vf}{\varphi}
\newcommand{\rhh}{|\nabla_H \rho|}
\newcommand{\Ba}{\mathcal{B}_\beta}
\newcommand{\Za}{Z_\beta}
\newcommand{\ra}{\rho_\beta}
\newcommand{\n}{\nabla}
\newcommand{\vt}{\vartheta}
\newcommand{\its}{\int_{\{y=0\}}}
\newcommand{\py}{\partial_y^a}

\numberwithin{equation}{section}

\newcommand{\RN} {\mathbb{R}^N}
\newcommand{\Sob}{S^{1,p}(\Omega)}
\newcommand{\Dxk}{\frac{\partial}{\partial x_k}}
\newcommand{\Co}{C^\infty_0(\Omega)}
\newcommand{\Je}{J_\ve}
\newcommand{\beq}{\begin{equation}}
\newcommand{\bea}[1]{\begin{array}{#1} }
	\newcommand{\eeq}{ \end{equation}}
\newcommand{\ea}{ \end{array}}
\newcommand{\eh}{\ve h}
\newcommand{\Dxi}{\frac{\partial}{\partial x_{i}}}
\newcommand{\Dyi}{\frac{\partial}{\partial y_{i}}}
\newcommand{\Dt}{\frac{\partial}{\partial t}}
\newcommand{\aBa}{(\alpha+1)B}
\newcommand{\GF}{\psi^{1+\frac{1}{2\alpha}}}
\newcommand{\GS}{\psi^{\frac12}}
\newcommand{\HFF}{\frac{\psi}{\rho}}
\newcommand{\HSS}{\frac{\psi}{\rho}}
\newcommand{\HFS}{\rho\psi^{\frac12-\frac{1}{2\alpha}}}
\newcommand{\HSF}{\frac{\psi^{\frac32+\frac{1}{2\alpha}}}{\rho}}
\newcommand{\AF}{\rho}
\newcommand{\AR}{\rho{\psi}^{\frac{1}{2}+\frac{1}{2\alpha}}}
\newcommand{\PF}{\alpha\frac{\psi}{|x|}}
\newcommand{\PS}{\alpha\frac{\psi}{\rho}}
\newcommand{\ds}{\displaystyle}
\newcommand{\Zt}{{\mathcal Z}^{t}}
\newcommand{\XPSI}{2\alpha\psi \begin{pmatrix} \frac{x}{\left< x \right>^2}\\ 0 \end{pmatrix} - 2\alpha\frac{{\psi}^2}{\rho^2}\begin{pmatrix} x \\ (\alpha +1)|x|^{-\alpha}y \end{pmatrix}}
\newcommand{\ZZ}{ \begin{pmatrix} xx^{t} & (\alpha + 1)|x|^{-\alpha}x y^{t}\\
	(\alpha + 1)|x|^{-\alpha}x^{t} y &   (\alpha + 1)^2  |x|^{-2\alpha}yy^{t}\end{pmatrix}}
\newcommand{\norm}[1]{\lVert#1 \rVert}
\newcommand{\ve}{\varepsilon}
\newcommand{\D}{\operatorname{div}}
\newcommand{\G}{\mathscr{G}}
\newcommand{\w}{\tilde{w}}
\newcommand{\s}{\sigma}

\title[quantitative uniqueness etc.]{ Space-like   Quantitative uniqueness for parabolic operators}

\author{Vedansh Arya}
\address{Tata Institute of Fundamental Research\\
Centre For Applicable Mathematics \\ Bangalore-560065, India}\email[Vedansh Arya]{vedansh@tifrbng.res.in}

\author{Agnid Banerjee}
\address{Tata Institute of Fundamental Research\\
Centre For Applicable Mathematics \\ Bangalore-560065, India}\email[Agnid Banerjee]{agnidban@gmail.com}

\thanks{A.B is supported  by Department of Atomic Energy,  Government of India, under
project no.  12-R \& D-TFR-5.01-0520.}


%
%
%
\keywords{}
\subjclass{35A02, 35B60, 35K05}

\maketitle
\begin{abstract}
We obtain sharp maximal  vanishing order     at a given time level  for   solutions  to parabolic equations  with a $C{^1}$ potential $V$.  Our main result Theorem \ref{main}  is  a parabolic generalization of a well known result of Donnelly-Fefferman and Bakri.  It also sharpens   a previous result of Zhu that establishes similar vanishing order estimates which are instead averaged over time.  The principal tool in our analysis is a   new quantitative version of the well-known Escauriaza-Fernandez-Vessella type Carleman estimate that we establish in our setting.

\end{abstract}

\tableofcontents

\section{Introduction and the Statement of the main result}
In this paper, we study quantitative uniqueness for non-trivial solutions to 

\begin{align}\label{meq}
	u_t+\D(A(x,t)\n u) +V(x,t)u=0,
\end{align}
where $V(x,t)$ is $C^1$ in $x$-variable and $1/2$-H\"{o}lder continuous in $t$-variable and the matrix valued function  $A(x,t)$ satisfies the following ellipticity and growth condition
\begin{equation}\label{assump}
\begin{cases}
\Lambda^{-1} \mathbb I_n \leq A\leq \Lambda \mathbb I_n,\ \Lambda \geq1\\
|A(x,t) - A(y,s) |\leq K(|x-y| + |t-s|^{1/2}).\end{cases}\end{equation}
We first review some basic results  about  quantitative uniqueness for elliptic equations. 
The vanishing order of a function $u$ at $x_0$ is the largest  integer $\ell$ at   such that $D^{\alpha} u=0$ for all $|\alpha| \leq \ell$, where $\alpha$ is a multi-index. In the papers \cite{DF1}, \cite{DF2}, Donnelly and Fefferman showed that if $u$ is an eigenfunction with eigenvalue  $\lambda$ on a  smooth, compact and connected $n$-dimensional Riemannian manifold $M$, then  the maximal vanishing order of $u$ is less than $C \sqrt{\lambda},$ where $C$ only depends on the manifold $M$. Using this estimate, they showed  that $H^{n-1}(\{x: u_{\lambda}(x)=0\})\leq C  \sqrt{\lambda},$ where $u_{\lambda}$ is the eigenfunction corresponding to $\lambda$ and therefore  gave a complete  answer to a famous  conjecture of Yau (\cite{Yau}). We note that the zero set of $u_{\lambda}$ is referred to as the nodal set. This order of vanishing is sharp. If, in fact, we consider $M = \mathbb S^n \subset \R^{n+1}$, and we take the spherical harmonic $Y_\kappa$ given by the restriction to $\mathbb S^n$ of the function $f(x_1,...,x_n,x_{n+1}) = \Re (x_1 + i x_2)^\kappa$, then one has $\Delta_{\mathbb S^n} Y_\kappa = - \lambda_\kappa Y_\kappa$, with $\lambda_\kappa = \kappa(\kappa+n-2)$, and the order of vanishing of $Y_\kappa$ at the North pole $(0,...,0,1)$ is precisely $\kappa = C \sqrt {\lambda_\kappa}$. 

Kukavica in   \cite{Ku}  considered the more general problem
\begin{equation}\label{e1}
\Delta u = V(x) u,
\end{equation}
where $V\in W^{1, \infty}$, and showed that the maximal vanishing order  of $u$ is bounded above by $C( 1+ ||V||_{W^{1, \infty}})$. He also conjectured that the rate of vanishing order of $u$ is less than or equal to $C(1+ ||V||_{L^{\infty}}^{1/2})$, which agrees with the Donnelly-Fefferman result when $V = - \lambda$. Employing Carleman estimates,  Bourgain and Kenig in \cite{BK}  ( see also \cite{EV}) showed that the rate of  vanishing order  of $u$  is less than $C(1+ ||V||_{L^{\infty}}^{2/3})$, and furthermore the exponent $\frac{2}{3}$ is sharp for complex  potentials $V$ based on a counterexample of Meshov (see \cite{Me}).

Not so long ago,  for equations of the type \eqref{e1}, the rate of vanishing order of $u$ has been shown to be less than $C(1+ ||V||_{W^{1, \infty}}^{1/2})$  independently by Bakri in \cite{Bk} and Zhu in \cite{Zhu1}. Bakri's approach is based on  an extension of the Carleman method in \cite{DF1}.  On the other hand, Zhu's approach is based on a variant of the frequency function approach  employed by Garofalo and Lin in \cite{GL1}, \cite{GL2}), in the context of strong  unique  continuation problems. The approach of Zhu has been  subsequently extended in \cite{BG1}  to variable coefficient principal part with Lipschitz coefficients  where a similar quantitative  uniqueness result at the boundary of $C^{1, Dini}$ domains has been obtained.

Very recently, Zhu in \cite{Zhu2} showed that a nontrivial solution $u$  to \eqref{meq}  in $B_1 \times (0,1)$  satisfies the following quantitative uniqueness estimate for all $r$ small

\begin{equation}\label{ert}
\int_{B_r \times (0,1)}  u(x,t)^2 dxdt \geq C r^{C_1 ||V||_{C^{1}_{x,t}}^{1/2} +C_2},
\end{equation}
when the coefficient matrix  $A$ and the potential $V$ are both Lipschitz in space and time. This  generalizes the result  in \cite{Bk} and \cite{Zhu1} to the parabolic case. This was done by derivation  of  an appropriate quantitative  version of Vessella's Carleman estimate as in \cite{Ve}.  A nonlocal generalization  of  Zhu's result   has been recently obtained by both of us in \cite{AB}.  See also \cite{BL, Ru, Zhu0} for  similar results that were previously established in the fractional elliptic setting. 

\medskip

We now  observe that since the  estimate in \eqref{ert} is on space time cylinders of the type $B_r \times (0,1)$, it doesn't give any vanishing order information  for the solution $u$  at  a given time level, say $t=0$. 
  At this point, we    recall  that  the   space-like  strong unique continuation results for equations of the type \eqref{meq}  states  that if  a solution $u$ to \eqref{meq} vanishes to infinite order in $x$ at $(0,0)$ where $A$ satisfies \eqref{assump}, then  $u(\cdot,  0) \equiv 0$. For such results, we refer to the well known works \cite{EF, EFV}   and also to  \cite{KT}  where coefficients with lower regularity assumptions are considered. Therefore in the spirit of such space-like strong unique continuation results, it is reasonable to expect that a  solution $u$ to \eqref{meq} satisfies an estimate   of the type  
\begin{equation}\label{ert1}
\int_{B_r}  u(x, 0)^2 dx \geq C r^{C_1 ||V||_{C^{1}_{x,t}}^{1/2} +C_2},
\end{equation}
provided $u(\cdot, 0) \not\equiv 0$.  This is precisely the content of our main result which we now state. We refer to Section \ref{s:n} for the precise notations.
\begin{thrm}\label{main}
	Let $u$ be a solution to \eqref{meq} in $Q_4$ such that $u(\cdot, 0) \not \equiv  0$ in $B_1$. Then there exists a universal constant $N$ such that for all $r \le 1/2$,  one has 
	\begin{align}\label{df}
		\int_{B_r} u^2(x,0)dx \geq   r^{\mathcal{C}},
\end{align}
where $\mathcal{C}=\frac{1}{\int_{B_1} u^2(x,0)dx}+ N\operatorname{log}(N\Theta)+N(||V||_1^{1/2}+[V]_{1/2}^{1/2}+1)$, $\Theta=	\frac{N\int_{Q_{4}} u^2( x, t)dxdt}{\int_{B_{1}}u^2(x,0)dx}$, 
$$||V||_1\overset{def}{=}||V||_{L^{\infty}(Q_4)} +||\n_x V||_{L^{\infty}(Q_4)} $$ and $$[V]_{1/2}\overset{def}{=}{\operatorname{sup}}_{(x,t), (x,s) \in Q_4} \frac{|V(x,t)-V(x,s)|}{|t-s|^{1/2}}.$$ 
\end{thrm}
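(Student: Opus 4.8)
The plan is to follow the scheme that goes back to Garofalo--Lin and Bakri, adapted to the parabolic setting as in Zhu, but now at a fixed time level. The principal new ingredient, as announced in the abstract, is a \emph{quantitative} Escauriaza--Fernandez--Vessella (EFV) type Carleman estimate with an explicit dependence on the potential $V$ through the quantities $\|V\|_1$ and $[V]_{1/2}$. So the first step is to record (or derive) this quantitative Carleman inequality: one takes the standard EFV weight, built from the backward heat kernel and a logarithmically convex modification of it, and tracks how the error terms generated by the zeroth order term $Vu$ and by the first order perturbation of the coefficient matrix $A$ are absorbed. The outcome should be an inequality valid for functions supported in a small parabolic cylinder, with a parameter that can be taken of the order $\|V\|_1^{1/2} + [V]_{1/2}^{1/2} + 1$ — this is exactly the scale that appears in the exponent $\mathcal C$.

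The second step is to convert this Carleman estimate into a \emph{three-cylinder} (more precisely a space-like three-ball) inequality for solutions of \eqref{meq}: if $u$ solves \eqref{meq} in $Q_4$, then on a suitable family of cylinders shrinking to the slice $\{t=0\}$ one controls an $L^2$ mass at an intermediate radius by a product of powers of the mass at a small radius and the mass at a large radius (say on $Q_4$), with the interpolation exponent and the multiplicative constant depending on $V$ only through the Carleman parameter. The standard device here is to insert a cutoff $\eta u$ into the Carleman inequality, where $\eta$ localizes in space to an annulus and in time to a neighborhood of $t=0$; the commutator terms $[\,\D(A\n\cdot),\eta\,]u$ produce the three regions. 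A slight technical point, special to the space-like statement, is that the relevant quantities are the \emph{elliptic} $L^2$ norms $\int_{B_r}u^2(x,0)\,dx$ on time slices rather than space-time integrals, which is why one uses the EFV weight (singular at $t=0$) rather than Vessella's weight used in \eqref{ert}; one must check that the boundary terms at $t=0$ have the right sign or can be controlled by $\int_{B_1}u^2(x,0)\,dx$, which is where the term $\tfrac{1}{\int_{B_1}u^2(x,0)dx}$ in $\mathcal C$ enters.

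The third step is the standard propagation-of-smallness / Hadamard-three-lines bootstrap: starting from the three-cylinder inequality and a doubling-type control of $\int_{B_1}u^2(x,0)\,dx$ in terms of $\int_{Q_4}u^2\,dxdt$ (this produces the factor $\Theta$ and the term $N\log(N\Theta)$ in $\mathcal C$), one iterates over a geometric sequence of radii $r_k = 2^{-k}$ to upgrade the single three-cylinder estimate into the claimed lower bound $\int_{B_r}u^2(x,0)\,dx \ge r^{\mathcal C}$ for all $r\le 1/2$. The iteration is routine once the interpolation exponent and constant are pinned down: summing the geometric series turns the multiplicative constants into the additive $N\log(N\Theta)$ contribution and the Carleman parameter into the $N(\|V\|_1^{1/2}+[V]_{1/2}^{1/2}+1)$ contribution to the exponent.

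I expect the main obstacle to be Step 1 together with the $t=0$ boundary analysis in Step 2: getting the \emph{sharp} power $1/2$ on $\|V\|_1$ requires using the gradient bound $\|\n_x V\|_{L^\infty}$ (not merely $\|V\|_{L^\infty}$, which would only give $2/3$) via an integration by parts that moves a derivative off $u$ onto $V$ inside the Carleman error term, and doing this cleanly in the parabolic EFV framework — where the weight is singular at $t=0$ and one must simultaneously handle the $1/2$-Hölder-in-time roughness of both $A$ and $V$ — is the delicate part. The time-regularity term $[V]_{1/2}^{1/2}$ must be produced by carefully estimating the contribution of $\partial_t$ falling on the weight against the Hölder modulus of $V$, and ensuring this does not degrade the exponent. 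Everything downstream (three-cylinder inequality, doubling, iteration) is by now fairly standard and I would treat it as such.
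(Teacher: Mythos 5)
Your overall architecture --- a quantitative EFV-type Carleman estimate with parameter of order $\|V\|_1^{1/2}+[V]_{1/2}^{1/2}+1$, followed by a localization at the time slice $\{t=0\}$ and a geometric iteration over dyadic radii --- is the same as the paper's, and your remarks about where the sharp $1/2$ power comes from (an integration by parts exploiting $\nabla_x V$, and an estimate of the time-H\"older modulus of $V$ against the weight) correctly identify the delicate points of the Carleman step. However, there is one genuinely missing ingredient without which your scheme does not close: a quantitative \emph{monotonicity-in-time} estimate (Lemma 3.2 of the paper, the analogue of Lemma 5 in \cite{EFV}). The Carleman inequality controls solid space-time integrals of $w^2$ against the singular weight; to minorize these by the time-slice quantity $\int_{B_\rho}u^2(x,0)\,dx$ --- which is what the theorem is about --- one must know that $N e^{\|V\|_\infty^{1/2}}\int_{B_{2\rho}}u^2(x,t)\,dx\ \ge\ \int_{B_\rho}u^2(x,0)\,dx$ for all $t$ in an interval of length comparable to $\rho^2/(\log\Theta_\rho+\|V\|_\infty^{1/2}+1)$. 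This is proved by a separate argument using the fundamental solution of $\partial_t-\operatorname{div}(A\nabla)$, and it is where the ratio $\Theta$ (hence the $N\log(N\Theta)$ term in $\mathcal C$) actually originates. Your ``doubling-type control of $\int_{B_1}u^2(x,0)\,dx$ in terms of $\int_{Q_4}u^2\,dxdt$'' goes in the opposite (easy) direction and does not supply this.

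A second, smaller discrepancy: the device that produces the purely spatial doubling at $t=0$ is not a three-cylinder interpolation but the boundary terms at $t=a$ in the Carleman estimate. After absorbing all other terms one is left with $\frac{a}{N}\int|\nabla w(x,0)|^2G(x,a)\,dx\le N\alpha\int w^2(x,0)G(x,a)\,dx$, and the Gaussian lemma of \cite{EFV} (Lemma 2.4 of the paper) converts this into $\int_{B_{2r}}u^2(x,0)\,dx\le e^{N\alpha}\int_{B_r}u^2(x,0)\,dx$ for $r\le 1/2$. The propagation-of-smallness/covering step you describe does appear, but only afterwards, to replace the local quantity $\Theta_\rho$ by the global $\Theta$. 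Finally, the term $\frac{1}{\int_{B_1}u^2(x,0)dx}$ in $\mathcal C$ does not come from the $t=0$ boundary analysis as you suggest; it is a purely algebraic artifact of converting the iterated doubling inequality $(r/2)^{\ln M/\ln 2}\int_{B_1}u^2(x,0)\,dx\le\int_{B_r}u^2(x,0)\,dx$ into the clean form $r^{\mathcal C}\le\int_{B_r}u^2(x,0)\,dx$.
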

It is easily seen that our space like non degeneracy  estimate  \eqref{df} refines the space time estimate \eqref{ert}. Moreover,  Theorem \ref{main} also requires lower regularity assumptions on the principal part $A$ and the potential $V$  as compared to that in \cite{Zhu2}. 

The key ingredient in the proof of Theorem \ref{main} is a certain quantitative version of the Carleman estimate in \cite{EFV} that we establish in our setting. See the estimate in  \eqref{carl} below. This is the key novelty of our work. We would like to mention over here that although  the  proof of our Carleman estimate borrows certain essential  ideas from the works  \cite{EF, EFV}, at the same time it relies on a somewhat different conjugation method   that appears in a very  recent work of one of us with Krishnan and Senapati in \cite{BKS} in the context of fractional heat inverse problem.  Having said that,  in the present work since we are concerned about quantitative uniqueness, therefore in our Carleman estimate \eqref{carl},  we additionally need  to ensure that  the "vanishing order" parameter $\alpha$ depends  quantitatively   on the $C^{1}$ norm of  the potential $V$ in an appropriate way.  This in fact entails some  delicate adaptations of the method in \cite{BKS}  to our setting.  In closing, we refer to the works \cite{CK} and \cite{KL} for  other variants of the  quantitative uniqueness results in the parabolic setting.

The paper is organized as follows. In section \ref{s:n}, we introduce some basic notations and gather some  known results that are relevant to our work. In section \ref{s:main}, we prove our main result.

\section{Preliminaries and Notations}\label{s:n}
A generic point in space time $\Rn \times [0, \infty)$ will be denoted by the variables $X=(x,t), Y=(y,s)$, etc. For notational convenience, $\nabla f$ and  $\operatorname{div}\ f$ will respectively refer to the quantities  $\nabla_x f$ and $ \operatorname{div}_x f$.   The partial derivative in $t$ will be denoted by $\p_t f$ and also by $f_t$. The partial derivative $\partial_{x_i} f$  will be denoted by $f_i$. We indicate with $C_0^{\infty}(\Omega)$ the set of compactly supported smooth functions in the region $\Omega$  in space-time. $dX$ will be denoted  by $dxdt.$ $B_r(x)$ will denote a  ball of radius $r$ with centre at $x \in \R^n$, $Q_r(x,t)$ will denote the space time  cylinder $B_r(x) \times [t,t+r^2].$  For further notational convenience, we will denote $B_r(0)$ and $Q_r(0,0)$ by $B_r$ and $Q_r$ respectively.

We now state some preparatory  results that is needed in the present work.   The following lemma which is Lemma 4 in \cite{EF} is regarding the existence of a suitable weight function $\sigma$ which has the appropriate convexity property for our Carleman estimate. \begin{lemma}\label{sig}
	Consider $$\theta(t)=t^{1/2}\left( \operatorname{log}\frac{1}{t}\right)^{3/2}.$$
Then, the solution of the ordinary differential equation
	\begin{align}\label{ode}
		\frac{d}{dt}\operatorname{log} \left(\frac{\s}{t\s'}\right)=\frac{\theta(\lambda t)}{t},\;\;\;\;\s(0)=0,\;\;\s'(0)=1,
	\end{align}
where $\lambda >0$ and has the following proprties  when $0\le \lambda t \le 1$
\begin{itemize}
	\item[(i)] $t/N \le \s(t) \le t.$
	\item[(ii)]  $1/N \le \s'(t) \le 1.$
\end{itemize}
where $N$ is a universal constant.
\end{lemma}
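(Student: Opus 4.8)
The plan is to turn the implicit relation \eqref{ode} into an explicit formula for $\sigma$ and then estimate it directly. Set $g(t):=\log\!\big(\sigma(t)/(t\sigma'(t))\big)$. The normalization $\sigma(0)=0$, $\sigma'(0)=1$ forces $g(0)=0$, since $\sigma(t)=t+o(t)$ and $\sigma'(t)\to 1$ give $\sigma(t)/(t\sigma'(t))\to 1$; hence \eqref{ode} integrates to $g(t)=\int_0^t \theta(\lambda s)/s\,ds$. The key observation is a scaling reduction: because $\theta(\lambda s)/s=\lambda^{1/2}s^{-1/2}\big(\log(1/(\lambda s))\big)^{3/2}$, the substitution $u=\lambda s$ gives $g(t)=G(\lambda t)$, where
\[
G(\tau):=\int_0^\tau u^{-1/2}\big(\log(1/u)\big)^{3/2}\,du
\]
is a fixed, $\lambda$-independent, nondecreasing function, finite on $[0,1]$ (indeed $G(1)=\int_0^\infty v^{3/2}e^{-v/2}\,dv<\infty$ after $u=e^{-v}$). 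In particular $0\le g(t)\le G(1)$ for $0\le \lambda t\le 1$.

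Next I would reconstruct $\sigma$ from $g$. The identity $g=\log(\sigma/(t\sigma'))$ is equivalent to $(\log\sigma)'(t)=e^{-g(t)}/t$, so, setting
\[
h(t):=\int_0^t \frac{e^{-g(s)}-1}{s}\,ds
\]
(a convergent integral, since $0\le 1-e^{-g(s)}\le g(s)=O\big((\lambda s)^{1/2}(\log(1/(\lambda s)))^{3/2}\big)$ as $s\to0$), one checks directly that $\sigma(t):=t\,e^{h(t)}$ satisfies $\sigma(0)=0$, $\sigma'(0)=1$, $\sigma'(t)=e^{h(t)}e^{-g(t)}$ and $\sigma(t)/(t\sigma'(t))=e^{g(t)}=e^{G(\lambda t)}$, hence solves \eqref{ode}. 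Thus $(i)$ is equivalent to bounding $e^{h(t)}$ above and below by universal constants, and, granting $(i)$ and $0\le g\le G(1)$, property $(ii)$ follows at once from $\sigma'(t)=\big(\sigma(t)/t\big)e^{-g(t)}$.

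The only genuine work — and the step where the precise shape of $\theta$ is used — is the uniform bound $-N\le h(t)\le 0$ on $\{0\le\lambda t\le 1\}$. Since $1-e^{-g}\le g$, we have $0\le -h(t)\le \int_0^t g(s)/s\,ds=\int_0^t G(\lambda s)/s\,ds$, and the substitution $u=\lambda s$ again collapses this to $\int_0^{\lambda t} G(u)/u\,du\le \int_0^1 G(u)/u\,du$. It remains to see that $G(u)/u$ is integrable at $0$: writing $u=\tau w$ in the integral defining $G$ and using $(a+b)^{3/2}\le\sqrt2\,(a^{3/2}+b^{3/2})$ gives $G(\tau)\le C\,\tau^{1/2}\big(\log(e/\tau)\big)^{3/2}$ on $(0,1]$ with $C$ universal, so $G(u)/u\le C\,u^{-1/2}(\log(e/u))^{3/2}$ — and here it is exactly the factor $\tau^{1/2}$ in $G$, inherited from the $t^{1/2}$ in $\theta(t)=t^{1/2}(\log(1/t))^{3/2}$, that makes the last integral finite. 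This yields $h(t)\in[-N_0,0]$ for a universal $N_0$, hence $e^{-N_0}\le\sigma(t)/t\le 1$, which is $(i)$, and then $\sigma'(t)=(\sigma(t)/t)e^{-g(t)}\in[e^{-N_0-G(1)},1]$, which is $(ii)$, with $N:=e^{N_0+G(1)}$. I expect the only delicate aspect to be the bookkeeping: one must check that every constant produced depends solely on the universal function $G$ and never on $\lambda$, which the scaling identity $g(t)=G(\lambda t)$ makes automatic.
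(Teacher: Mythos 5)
Your argument is correct. Note that the paper does not actually prove this lemma: it is quoted verbatim as Lemma~4 of \cite{EF} (Escauriaza--Fernandez), so there is no internal proof to compare against. Your derivation is a complete, self-contained substitute: integrating the equation once gives $\sigma/(t\sigma')=e^{g(t)}$ with $g(t)=G(\lambda t)$, the scaling identity makes all bounds manifestly $\lambda$-independent, and the explicit reconstruction $\sigma(t)=t\,e^{h(t)}$ with $h(t)=\int_0^t (e^{-g(s)}-1)s^{-1}\,ds$ reduces everything to the two universal finite quantities $G(1)=\int_0^\infty v^{3/2}e^{-v/2}\,dv$ and $\int_0^1 G(u)u^{-1}\,du$, the latter finite precisely because of the $t^{1/2}$ factor in $\theta$ — which is indeed the crux of the matter, and is the same mechanism underlying the proof in \cite{EF}. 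The only point worth making explicit is uniqueness: the equation is second order, but the conditions $\sigma(0)=0$, $\sigma'(0)=1$ pin down both constants of integration (they force $g(0)=0$ and then the multiplicative constant $K=1$ in $\sigma=Kte^{h}$), which you address implicitly and correctly.
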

The next lemma which is Lemma 5 in \cite{EF} is regarding certain estimates in the Gaussian space. This  will be needed to handle some error terms in the proof of our Carleman estimate that arises due to Lipschitz  perturbation of the principal part.
\begin{lemma}\label{logi}
	Let $G(x,t)=t^{-n/2}e^{-|x|^2/4t}$ and $\s$ denotes the function defined in Lemma \ref{sig} for $\lambda = \A/ \delta^2$ and $$\theta(t)=t^{1/2}\left( \operatorname{log}\frac{1}{t}\right)^{3/2}.$$ Then, there exists a constant $N$ depending only on $n$ such that the following inequalities hold for all functions $w \in C_0^{\infty}(\R^n \times [0,1/2\lambda))$,
	\begin{align*}
	\int	\s^{-2\A}\left(\frac{|x|}{t}+\frac{|x|^3}{\A t^2}\right)w^2G\le NN^{2\A}\lambda^{2\A+N}\int w^2+ N \delta \int \s^{-2\A}\frac{\theta(\lambda t)}{t}w^2 G
	\end{align*}
and 
\begin{align*}
\int	\s^{1-2\A}\left(\frac{|x|}{t}+\frac{|x|^3}{\A t^2}+\frac{|x|^2}{\delta t}\right)|\n w|^2G\le NN^{2\A}\lambda^{2\A+N}\int t|\n w|^2+ N \delta \int \s^{1-2\A}\frac{\theta(\lambda t)}{t}|\n w|^2 G.
\end{align*}
\end{lemma}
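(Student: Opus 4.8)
The plan is to follow the argument of Lemma~5 in \cite{EF}, and the whole estimate rests on two elementary observations. First, by Lemma~\ref{sig}(i) one has $t/N\le \sigma(t)\le t$, hence $t^{-2\A}\le \sigma^{-2\A}\le N^{2\A}t^{-2\A}$ (and likewise $\sigma^{1-2\A}=\sigma\cdot\sigma^{-2\A}\le N^{2\A}t^{\,1-2\A}$). Second, the choice $\lambda=\A/\delta^2$ unwinds the weight on the right into the explicit expression
\[
\delta\,\frac{\theta(\lambda t)}{t}=\delta\,\frac{(\lambda t)^{1/2}}{t}\Big(\log\tfrac1{\lambda t}\Big)^{3/2}=\sqrt\A\;t^{-1/2}\Big(\log\tfrac1{\lambda t}\Big)^{3/2}.
\]
Since $w$ is supported in $t<1/2\lambda$ we always have $\lambda t<1/2$, so $\log\frac1{\lambda t}>\log2>0$; it suffices to treat $\A\ge1$, as in our application. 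All constants below depend only on $n$, and the $\A$-dependence will be confined to the announced factor $N^{2\A}\lambda^{2\A}$.

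First I would split the time integral at $t_0=1/(4\lambda)$. On the slab $t_0\le t<1/2\lambda$ everything is elementary: there $\sigma^{-2\A}\le(4N\lambda)^{2\A}$, while maximizing the one–variable functions $r\,e^{-r^2/4t}$ and $r^3e^{-r^2/4t}$ gives $\big(\frac{|x|}{t}+\frac{|x|^3}{\A t^2}\big)G(x,t)\le N\,t^{-(n+1)/2}\le N(4\lambda)^{(n+1)/2}$ (using $\A\ge1$). Multiplying these two bounds and integrating produces a contribution of size at most $NN^{2\A}\lambda^{2\A+N}\int w^2$, i.e. the first term on the right of the claimed inequality.

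The core of the proof is the slab $0<t<t_0$, where $\log\frac1{\lambda t}>\log4>1$. Here I would pass to the self–similar variable $\rho=|x|^2/t$ and cut the space integral at the threshold
\[
R(t):=8\Big(2\A+\tfrac n2+1\Big)\log\tfrac1{\lambda t},\qquad\text{so that}\qquad e^{-R(t)/8}=(\lambda t)^{\,2\A+n/2+1}.
\]
On the inner region $\{\rho\le R(t)\}$ one has $\frac{|x|}{t}+\frac{|x|^3}{\A t^2}=t^{-1/2}\big(\sqrt\rho+\rho^{3/2}/\A\big)$, and since $\rho\le R(t)\le N\A\log\frac1{\lambda t}$ together with $(\log\frac1{\lambda t})^{1/2}\le(\log\frac1{\lambda t})^{3/2}$ a direct computation gives $\sqrt\rho+\rho^{3/2}/\A\le N\sqrt\A\,(\log\frac1{\lambda t})^{3/2}$; hence on $\{\rho\le R(t)\}$ the whole weight is $\le N\,\delta\,\frac{\theta(\lambda t)}{t}$, and this part of the integral is absorbed into the second term on the right. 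On the outer region $\{\rho>R(t)\}$ one uses $G(x,t)\le t^{-n/2}e^{-\rho/8}e^{-R(t)/8}$, the bound $(\sqrt\rho+\rho^{3/2})e^{-\rho/8}\le C$, and $\sigma^{-2\A}\le N^{2\A}t^{-2\A}$ to see the integrand is at most $C\,N^{2\A}\,t^{-2\A-n/2-1/2}(\lambda t)^{2\A+n/2+1}w^2=C\,N^{2\A}\lambda^{2\A+n/2+1}t^{1/2}w^2$; integrating in $x$ and then over $t\in(0,t_0)$ gives again a bound $NN^{2\A}\lambda^{2\A+N}\int w^2$. Summing the three pieces proves the first estimate. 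For the second estimate I would run the identical decomposition with $w$ replaced by $|\nabla w|$ and $\sigma^{-2\A}$ replaced by $\sigma^{1-2\A}\le N^{2\A}t^{\,1-2\A}$; the extra factor $\sigma\sim t$ is exactly what turns $\int w^2$ into $\int t|\nabla w|^2$ on the right. The only new weight is $\frac{|x|^2}{\delta t}=\sqrt{\lambda/\A}\,\rho$: on $\{\rho\le R(t)\}$ it is $\le\sqrt{\lambda/\A}\,R(t)\le N\sqrt{\A\lambda}\log\frac1{\lambda t}$, and $N\sqrt{\A\lambda}\log\frac1{\lambda t}\le N\delta\frac{\theta(\lambda t)}{t}=N\sqrt\A\,t^{-1/2}(\log\frac1{\lambda t})^{3/2}$ reduces to $\sqrt{\lambda t}\le N(\log\frac1{\lambda t})^{1/2}$, true since $\lambda t<1/4$; on $\{\rho>R(t)\}$ the factor $\rho\,e^{-\rho/8}$ is again bounded and the term is harmless.

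The step I expect to be the main obstacle is precisely the bookkeeping on the small–time slab: the cutoff $R(t)$ must be chosen large enough (proportional to $\A\log\frac1{\lambda t}$) that the Gaussian tail $e^{-R(t)/8}$ defeats the singular power $t^{-2\A-n/2-1/2}$ coming from $\sigma^{-2\A}$, yet small enough that on $\{\rho\le R(t)\}$ the polynomial weights $\frac{|x|}{t}+\frac{|x|^3}{\A t^2}$ — and, in the gradient estimate, $\frac{|x|^2}{\delta t}$ — stay below $\delta\theta(\lambda t)/t$. Reconciling these two competing demands while keeping every constant $n$–dependent and the $\A$–growth confined to $N^{2\A}\lambda^{2\A}$ is where the care lies; this is exactly the content of \cite[Lemma~5]{EF}, whose proof we follow.
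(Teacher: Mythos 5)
Your argument is correct: the splitting at $t_0=1/(4\lambda)$, the self-similar cutoff $\rho=|x|^2/t$ at $R(t)\sim\A\log\frac{1}{\lambda t}$, and the bookkeeping that sends the inner region into the $\delta\,\theta(\lambda t)/t$ term and the outer/large-time regions into $N^{2\A}\lambda^{2\A+N}\int w^2$ (resp.\ $\int t|\n w|^2$) all check out, including the identity $\delta\,\theta(\lambda t)/t=\sqrt{\A}\,t^{-1/2}(\log\frac{1}{\lambda t})^{3/2}$ on which the absorption rests. The paper itself gives no proof of this lemma --- it is quoted verbatim as Lemma~5 of \cite{EF} --- and your reconstruction follows exactly the route of that cited reference, so there is nothing to compare beyond noting that your proof is a faithful and complete version of the standard one.
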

We also need the following  lemma ( see \cite[Lemma 3]{EFV} ) that  will be used to estimate certain boundary terms in the proof of  our Carleman estimate.
\begin{lemma}\label{logb}
	For all $h \in C_0^{\infty}(\R^n)$ and $a>0$ the following inequality holds
	$$\int \frac{|x|^2}{8a}h^2 e^{-|x|^2/4a}dx \le 2a\int |\n h|^2  e^{-|x|^2/4a}dx +\frac{n}{2}\int h^2  e^{-|x|^2/4a}dx.$$
\end{lemma}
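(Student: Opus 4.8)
\textbf{Proof proposal for Lemma \ref{logb}.}

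The plan is to obtain the inequality from a single Gaussian integration by parts followed by one application of Young's inequality with a carefully chosen parameter. Write $G(x)=e^{-|x|^2/4a}$ and record the elementary identity $\n G = -\tfrac{x}{2a}\,G$, so that $x\cdot\n G = -\tfrac{|x|^2}{2a}\,G$. Consequently $\int \tfrac{|x|^2}{2a}h^2 G\,dx = -\int h^2\,(x\cdot\n G)\,dx$, and since $h\in C_0^\infty(\R^n)$ the product $h^2 x\,G$ vanishes outside a compact set, so there are no boundary contributions when we integrate by parts. Using $\D(h^2 x)=n h^2 + 2h\,(x\cdot\n h)$, this gives the identity
$$\int \frac{|x|^2}{2a}h^2 G\,dx = \int \big(n h^2 + 2h\,(x\cdot\n h)\big)\,G\,dx .$$

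Next I would estimate the cross term pointwise by $2AB\le \tfrac{A^2}{4a}+4aB^2$ with $A=|x|\,|h|$ and $B=|\n h|$, i.e. $2h\,(x\cdot\n h)\le \tfrac{|x|^2}{4a}h^2 + 4a|\n h|^2$. Substituting into the identity above yields
$$\int \frac{|x|^2}{2a}h^2 G\,dx \;\le\; \frac{1}{4a}\int |x|^2 h^2 G\,dx + 4a\int |\n h|^2 G\,dx + n\int h^2 G\,dx .$$
Since $h$ is compactly supported and $G$ is bounded, every integral here is finite; hence the term $\tfrac{1}{4a}\int|x|^2 h^2 G$ may legitimately be moved to the left, leaving $\tfrac{1}{4a}\int |x|^2 h^2 G\,dx \le 4a\int|\n h|^2 G\,dx + n\int h^2 G\,dx$. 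Dividing by $2$ produces exactly the asserted inequality.

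There is no genuine obstacle in this argument; the only point requiring attention is the choice of the Young parameter. Choosing $\delta=1/(4a)$ is dictated by the need for the coefficient of $\int|x|^2 h^2 G$ surviving after absorption to be at least $1/(8a)$, and one checks that this very choice simultaneously collapses the coefficients of $\int|\n h|^2 G$ and of $\int h^2 G$ to precisely $2a$ and $n/2$, so the constants stated in the lemma are attained. The integration by parts and all manipulations are fully justified by $h\in C_0^\infty(\R^n)$, so no further regularity or decay hypotheses are needed.
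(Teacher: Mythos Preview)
Your proof is correct. The integration by parts identity and the Young-inequality absorption are both valid, and the constants come out exactly as stated.

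Note, however, that the paper does not supply its own proof of this lemma: it is quoted as \cite[Lemma 3]{EFV} and used as a black box. Your argument is in fact the standard derivation of this Gaussian-weighted inequality (and is essentially how it is proved in \cite{EFV}), so there is no meaningful methodological difference to discuss.
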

The next  lemma  which is \cite[Lemma 4]{EFV} will be eventually used to obtain a quantitative  space like doubling property.
\begin{lemma}\label{do}
	Assume that $N \ge 1,$ $h \in C_{0}^{\infty}(\R^n)$ and the inequality \begin{align*}
		2a \int |\n h|^2 e^{-|x|^2/4a}dx + \frac{n}{2}\int h^2 e^{-|x|^2/4a}dx \le N  \int h^2  e^{-|x|^2/4a}dx
	\end{align*}
holds for $a \le \frac{1}{12N }.$ Then
\begin{align}
	\int_{B_{2r}}h^2 dx \le e^N  \int_{B_{r}}h^2 dx 
\end{align}
when $0 < r \le 1/2.$
\end{lemma}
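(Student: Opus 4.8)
The plan is to recast the hypothesis as a bound on the Gaussian mean and the second moment of $h$, and then to convert that bound into the ball--doubling inequality by choosing the Gaussian scale proportional to $r^{2}$. For $a>0$ put
\[
H(a):=\int h^{2}e^{-|x|^{2}/4a}\,dx .
\]
Fixing $0<a\le \tfrac1{12N}$ and combining Lemma \ref{logb} with the assumed inequality, which bounds the right-hand side of that lemma by $N\,H(a)$, gives the second-moment bound
\[
\int |x|^{2}h^{2}e^{-|x|^{2}/4a}\,dx\le 8aN\,H(a)\qquad\text{for every }a\le\tfrac1{12N}.
\]

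From this single inequality I would extract three facts. \emph{(i) Concentration of mass:} by Chebyshev, $\int_{\{|x|>R\}}h^{2}e^{-|x|^{2}/4a}\,dx\le \tfrac{8aN}{R^{2}}H(a)$ for every $R>0$, so if $R^{2}\ge 16aN$ then $\int_{B_{R}}h^{2}e^{-|x|^{2}/4a}\,dx\ge \tfrac12 H(a)$, and hence $H(a)\le 2\int_{B_{R}}h^{2}\,dx$. \emph{(ii) Polynomial growth:} differentiating under the integral sign, $H'(a)=\tfrac1{4a^{2}}\int |x|^{2}h^{2}e^{-|x|^{2}/4a}\,dx\ge 0$, while the second-moment bound gives $(\log H)'(a)\le \tfrac{2N}{a}$ on $(0,\tfrac1{12N}]$, hence $H(b)\le (b/a)^{2N}H(a)$ whenever $0<a\le b\le\tfrac1{12N}$. \emph{(iii) Elementary upper bound:} for all $\rho,a>0$ one has $\int_{B_{\rho}}h^{2}\,dx\le e^{\rho^{2}/4a}\int_{B_{\rho}}h^{2}e^{-|x|^{2}/4a}\,dx\le e^{\rho^{2}/4a}H(a)$, since $e^{-|x|^{2}/4a}\ge e^{-\rho^{2}/4a}$ on $B_{\rho}$.

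Given $0<r\le 1/2$, I would then fix a scale $a$ of size comparable to $r^{2}/N$. Since $r\le 1/2$ such an $a$ automatically lies in $(0,\tfrac1{12N}]$ and satisfies $16aN\le r^{2}$, so (i) applies with $R=r$ and yields $H(a)\le 2\int_{B_{r}}h^{2}\,dx$; on the other hand (iii) with $\rho=2r$ yields $\int_{B_{2r}}h^{2}\,dx\le e^{r^{2}/a}H(a)$. Combining the two estimates gives a doubling inequality of the form $\int_{B_{2r}}h^{2}\,dx\le 2e^{r^{2}/a}\int_{B_{r}}h^{2}\,dx$; to bring the constant down one first passes, via (ii), from $a$ to a slightly larger scale $b$ and uses $\int_{B_{2r}}h^{2}\,dx\le e^{r^{2}/b}(b/a)^{2N}H(a)$ before optimizing the free parameters. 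The numerical constant $\tfrac1{12N}$ in the hypothesis is exactly what permits these scales to be chosen so that the product of all the exponential factors is at most $e^{N}$ for every $r\le 1/2$, which is the asserted estimate.

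The derivation of the second-moment bound and of (i)--(iii) is the soft part. The only genuine obstacle is the quantitative bookkeeping in the last step: the factor $e^{\rho^{2}/4a}$ in (iii) forces $a$ not to be too small (roughly $a\gtrsim r^{2}/N$), whereas the concentration condition $16aN\le r^{2}$ in (i) forces $a$ not to be too large (roughly $a\lesssim r^{2}/N$); one must therefore calibrate the proportionality constants---together with the polynomial factor $(b/a)^{2N}$ from (ii) if it is invoked---so that everything collapses to $e^{N}$ uniformly for $0<r\le 1/2$. This is precisely the point at which the numerical constants appearing in the statement get pinned down.
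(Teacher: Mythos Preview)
The paper does not actually prove this lemma; it merely quotes it as \cite[Lemma~4]{EFV}. So there is no in-paper argument to compare your proposal against.

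On its own merits, your outline is the standard route to such Gaussian doubling estimates and every ingredient you list is correct: the second--moment bound via Lemma~\ref{logb}, the Chebyshev concentration (i), the differential inequality $(\log H)'\le 2N/a$ giving (ii), and the trivial comparison (iii) are all valid and combine to give a doubling inequality of the form
\[
\int_{B_{2r}}h^{2}\,dx\;\le\;C\,e^{CN}\int_{B_{r}}h^{2}\,dx
\]
for some absolute constant $C$. You are honest that you have not carried out the final numerical optimization, and indeed the crude combination you write down (take $a=r^{2}/(16N)$, use (iii) at the same scale) gives $2e^{16N}$, not $e^{N}$; passing to the optimal intermediate scale $b$ via (ii) improves this to roughly $e^{6N}$, but not to $e^{N}$. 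Whether the specific constant $\tfrac{1}{12N}$ in the hypothesis can be made to yield \emph{exactly} $e^{N}$ with your scheme, or whether the original EFV argument uses a sharper concentration step than Chebyshev, you would need to check in \cite{EFV}. For the purposes of the present paper this distinction is immaterial: the lemma is applied only once (in Step~1 of the proof of Theorem~\ref{main}), and there the input ``$N$'' is $N^{3}\alpha_{0}$ while the output doubling constant is immediately rewritten with a fresh universal $N^{4}$, so $e^{CN}$ is just as good as $e^{N}$.
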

We will also  need the following regularity estimates for solutions to \eqref{meq} which can be found in \cite[Chapter 6]{Li}.
\begin{lemma}\label{enl}
	Let $u$ be a solution of \eqref{meq}. Then there exists a universal constant $C_E$ depending on the Lipschitz character of the coefficient matrix $A$ and $n$  such that 
	\begin{align}\label{eniq}
		||u||_{L^{\infty}(Q_3)}+||\n u||_{L^{\infty}(Q_3)} \le C_E(1+||V||_{\infty})|| u||_{L^2(Q_4)}.
	\end{align}
\end{lemma}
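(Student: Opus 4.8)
This is a purely local interior parabolic regularity fact, and the plan is to obtain it from the De Giorgi--Nash--Moser theory together with interior $W^{2,1}_p$ (parabolic Calder\'on--Zygmund) estimates for equations with Lipschitz principal part, exactly in the form recorded in \cite[Chapter 6]{Li}, while keeping the dependence on $||V||_\infty$ explicit. Since \eqref{meq} is uniformly parabolic (after a time reversal, if one prefers, so that it is in standard forward form; the cylinders $Q_r(x,t)=B_r(x)\times[t,t+r^2]$ are the natural ones for the operator as written), I would regard $Vu$ as a zero-order perturbation with $L^\infty$ coefficient and work on nested cylinders $Q_3\subset Q_{7/2}\subset Q_4$, first bounding $||u||_{L^\infty}$ and then, using this, $||\n u||_{L^\infty}$.

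For the first step I would invoke the local boundedness half of De Giorgi--Nash--Moser to get $||u||_{L^\infty(Q_{7/2})}\le C\,||u||_{L^2(Q_4)}$ with $C=C(n,\Lambda,||V||_\infty)$. To pin down the $||V||_\infty$-dependence I would note that $w:=e^{-||V||_\infty t}|u|$ is a nonnegative weak subsolution of the \emph{homogeneous} operator $\p_t-\D(A\n\cdot)$ (by Kato's inequality and the choice of exponential weight), so its Moser constant depends only on $n$ and $\Lambda$, and then run a parabolic rescaling $u(x,t)\mapsto u(x_0+\rho x,\,t_0+\rho^2 t)$ with $\rho\asymp\min(1,||V||_\infty^{-1/2})$: this renormalizes the potential to sup-norm $\le1$, leaves $\Lambda$ and the Lipschitz character of $A$ unchanged, and keeps the rescaled cylinder inside $Q_4$ for every $(x_0,t_0)\in Q_{7/2}$. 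Undoing the rescaling converts the $L^2\to L^\infty$ gain (a fixed power of $\rho^{-1}$) into the factor $(1+||V||_\infty)$ of \eqref{eniq}.

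For the gradient step I would rewrite $\D(A\n u)=A_{ij}\p_{ij}u+(\p_iA_{ij})\p_ju$; by \eqref{assump} the principal part $A$ is (parabolically) continuous and $\p_iA_{ij},V$ are bounded in terms of the Lipschitz character of $A$ and $||V||_\infty$, so $u$ solves a non-divergence uniformly parabolic equation of the type covered by the interior $W^{2,1}_p$ estimate of \cite[Chapter 6]{Li}. Applied on $Q_3\subset Q_{7/2}$ with a fixed $p>n+2$ and the lower-order terms absorbed by interpolation, this yields $||u||_{W^{2,1}_p(Q_3)}\le C\,||u||_{L^\infty(Q_{7/2})}$; the parabolic Sobolev embedding $W^{2,1}_p\hookrightarrow C^{1,\beta}$ then gives the desired bound on $||\n u||_{L^\infty(Q_3)}$, and combining with Step~1 gives \eqref{eniq}. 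The only nontrivial point, and the one I would be careful about, is the bookkeeping of the $||V||_\infty$-dependence through the Moser iteration, the $W^{2,1}_p$ estimate and the rescalings (and checking that $Q_{7/2}$ and $Q_3$ nest properly inside $Q_4$ for the one-sided cylinder convention); the rest is a direct citation of \cite[Chapter 6]{Li}.
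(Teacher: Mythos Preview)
The paper does not prove this lemma: it is stated as a known regularity estimate and attributed directly to \cite[Chapter 6]{Li}, with no argument given. Your sketch is therefore considerably more detailed than anything in the paper and follows the natural route (local boundedness via De Giorgi--Nash--Moser, then a gradient bound via interior $W^{2,1}_p$ estimates for Lipschitz principal parts, both from Lieberman), which is precisely what the citation points to.

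One small caveat on the bookkeeping you flagged yourself: the rescaling with $\rho\asymp\min(1,||V||_\infty^{-1/2})$ does not quite produce the first power $(1+||V||_\infty)$ stated in \eqref{eniq}. Undoing the scaling in the $L^2\to L^\infty$ step costs $\rho^{-(n+2)/2}\sim(1+||V||_\infty)^{(n+2)/4}$, and the gradient step contributes a further $\rho^{-1}$; so the honest outcome of your argument is a fixed universal power of $(1+||V||_\infty)$ rather than exactly the first power. This is harmless for every application in the paper --- each use of \eqref{eniq} either feeds into a logarithm (as in Lemma~\ref{mon} and the definition of $\Theta_\rho$) or is dominated by the choice $\alpha\ge N(1+||V||_1^{1/2}+[V]_{1/2}^{1/2})$, so any polynomial dependence on $||V||_\infty$ suffices --- but strictly speaking the exponent as stated is optimistic, and your sketch accurately reflects what the standard argument actually yields.
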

In  closing we make the following discursive but  important remark.

\begin{rmrk}
	In the rest of the work, whenever we    say  that a constant $N$  is  universal, it means that  it depends only on the dimension $n$ and the ellipticity and the  Lipschitz norm of  $A.$ Throughout the paper we will use $N$ as all purpose constant which may vary from line to line but will be universal.
\end{rmrk}

\section{Proof of Theorem \ref{main}}\label{s:main}

The following Carleman estimate below is one of the key ingredients in the proof of Theorem \ref{main}.  This is analogous to Lemma 6 in \cite{EFV}, however   the key new feature being the asymptotic dependence  of the weight parameter $\alpha$ on the "$C^{1}$" norm of $V$.   Such an asymptotic dependence is then  crucially exploited in the proof of  the quantitative estimate  \eqref{df} in Theorem \ref{main}.
\begin{lemma}\label{carleman}
	Let $A(0,0)=\mathbb I_n$. There exists a universal constant $N$ and $\delta \in (0,1)$ such that for all $\A \ge N(1+||V||_1^{1/2}+[V]_{1/2}^{1/2}),$ the following inequality holds
	\begin{align}\label{carl}
		&	\A^2 \int \s_a^{-2\A}w^2G_a +\A \int \s_a^{1-2\A}|\n w|^2G_a\\
		& \le N \int \s_a^{1-2\A}(\D(A(x,t) \n w)+w_t+V(x,t)w)^2 G_a +N^{2\A}\A^{2\A}\underset{t \ge 0}{\operatorname{sup}}\int w^2
		+t|\n w|^2 dx\notag\\
		&+\s(a)^{-2\A}\left(-\frac{a}{N}\int |\n w (x,0)|^2G(x,a)dx + N \A \int w^2(x,0)G(x,a)dx\right).\notag
	\end{align} for all $0<a\le \frac{1}{4\lambda}$ and $w \in C_{0}^{\infty}(B_4 \times [0,\frac{1}{2\lambda}))$, where $\lambda=\A/\delta^2,$ $G_a(x,t)=G(x,t+a)$ and $\s_a(t)=\s(t+a).$
\end{lemma}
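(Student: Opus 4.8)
The plan is to establish \eqref{carl} by a conjugation argument in the spirit of \cite{EF,EFV,BKS}. First I would substitute $w = \s_a^{\A} G_a^{-1/2} v$ (equivalently, work with the conjugated operator acting on $v = \s_a^{-\A}G_a^{1/2}w$), so that the Gaussian weight $G_a$ and the power weight $\s_a^{-2\A}$ are absorbed, and the heat operator $\p_t + \Delta$ becomes a perturbed operator whose symmetric part produces the good positive terms $\A^2\int\s_a^{-2\A}w^2 G_a$ and $\A\int \s_a^{1-2\A}|\n w|^2 G_a$. The choice $\lambda = \A/\delta^2$ in Lemma \ref{sig} is exactly what makes the convexity defect $\frac{d}{dt}\log(\s/(t\s'))=\theta(\lambda t)/t$ small and controllable; the properties (i)--(ii) of $\s$ guarantee $\s_a(t)\asymp t+a$ throughout, so powers of $\s_a$ and of $t+a$ are interchangeable up to universal constants. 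Writing the conjugated identity, one splits the resulting bilinear form into the ``principal'' commutator terms (which give the coercive lower bound), the first-order ``drift'' terms, and genuine error terms; integration by parts in $t$ from $t=0$ produces precisely the boundary contribution at $t=0$ appearing in the last line of \eqref{carl}, with the sign of the $|\n w(x,0)|^2$ term being favorable (it can be thrown away or kept as stated), while the $w(x,0)^2$ boundary term is estimated using Lemma \ref{logb} with $a$ in the role of the Gaussian parameter.

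Next I would handle the two sources of error. The first is the variable-coefficient perturbation: since $A(0,0)=\mathbb I_n$ and $A$ is Lipschitz, $A(x,t)-\mathbb I_n = O(|x|+t^{1/2})$, so $\D((A-\mathbb I_n)\n w)$ generates error terms of the form $\s_a^{1-2\A}(|x|^2/(t+a))|\n w|^2 G_a$, $\s_a^{-2\A}(|x|/(t+a))w^2 G_a$ and the like. These are exactly the quantities controlled by Lemma \ref{logi} (applied with the shifted Gaussian $G_a$ and $\s_a$, i.e. translating $t\mapsto t+a$): each such term is bounded by $N\delta$ times a good term of the left-hand side plus $NN^{2\A}\lambda^{2\A+N}\sup_t\int (w^2+t|\n w|^2)$, which after using $\lambda=\A/\delta^2$ matches the $N^{2\A}\A^{2\A}\sup_t\int(w^2+t|\n w|^2)$ term in \eqref{carl}. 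Choosing $\delta$ small and universal lets the $N\delta$-fraction be absorbed into the coercive left side. The second source is the potential $V$: the term $\s_a^{1-2\A}(Vw)(\cdot)$ contributes, after Cauchy–Schwarz, something like $\int \s_a^{1-2\A}V^2 w^2 G_a \le \|V\|_\infty^2\int\s_a^{1-2\A}w^2 G_a$, and since $\s_a^{1-2\A}=\s_a\cdot\s_a^{-2\A}\le (t+a)\s_a^{-2\A}$ with $t+a\le 1/\lambda = \delta^2/\A$, this is $\le (\delta^2\|V\|_\infty^2/\A)\int\s_a^{-2\A}w^2 G_a$; this is absorbed into $\A^2\int\s_a^{-2\A}w^2 G_a$ precisely once $\A \gtrsim \|V\|_\infty^{1/2}$. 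Similarly the gradient-of-$V$ and the $1/2$-Hölder-in-$t$ pieces of $V$ (which enter through the $\p_t$-integration by parts) generate terms controlled by $\|\n_x V\|_\infty$ and $[V]_{1/2}$ against the good $w^2$ term, forcing the threshold $\A \ge N(1+\|V\|_1^{1/2}+[V]_{1/2}^{1/2})$ stated in the lemma — this quantitative bookkeeping of the $\A$-threshold, absent in \cite{EFV}, is the genuinely new point.

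The main obstacle I anticipate is organizing the conjugation so that the cross terms between the $\s_a$-weight, the Gaussian $G_a$, and the variable coefficients $A$ are grouped correctly: one must verify that the dangerous first-order-in-$\n w$ terms either cancel by the choice of conjugating factor or are of the precise form appearing in Lemma \ref{logi}, and that no uncontrolled term with a bad power of $|x|/(t+a)$ survives. A secondary delicate point is tracking the effect of the nonzero initial parameter $a>0$: all the cited lemmas (\ref{logi}, \ref{logb}, \ref{do}) are stated for the unshifted Gaussian, so one applies them after the translation $t\mapsto t+a$ and must check the hypotheses ($a\le 1/(12N)$ in Lemma \ref{logb}, the support condition, etc.) remain valid for $0<a\le 1/(4\lambda)$, which they do since $\lambda = \A/\delta^2$ is large. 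Once all error terms are absorbed and the boundary term at $t=0$ is isolated, collecting the surviving positive terms yields \eqref{carl}.
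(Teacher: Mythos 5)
Your overall architecture (conjugation by the Gaussian and the power of $\s_a$, coercivity from the convexity of $\s$, Lemma \ref{logi} for the $O(|x|)$-errors, absorption for a small universal $\delta$) matches the paper's, but two of the steps as you describe them would not close.

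First, the passage from time-frozen coefficients to $A(x,t)$ cannot be handled by Lemma \ref{logi}. Since $A$ is only $1/2$-H\"older in $t$, one cannot integrate by parts in $t$ against $A(x,t)$; one must freeze $A$ at $t=0$, run the commutator argument for $\D(A(x,0)\n\cdot)$, and then estimate $\int\s_a^{1-2\A}\bigl(\D((A(x,0)-A(x,t))\n w)\bigr)^2G_a$. Because $|a_{ij}(x,0)-a_{ij}(x,t)|\le M\sqrt t\lesssim \s_a$, this error contains $M^2\int\s_a^{2-2\A}|D^2w|^2G_a$ --- a genuine second-derivative term, whereas Lemma \ref{logi} only controls weighted integrals of $w^2$ and $|\n w|^2$. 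The paper devotes a separate energy estimate (differentiating the equation, multiplying by $\s^{2-2\A}G$ and integrating by parts) to bound $\int\s_a^{2-2\A}|D^2w|^2G_a$ by the right-hand side of \eqref{carl} plus $\delta^2$-fractions of the good terms; your proposal has no substitute for this step, and without it the time-dependence of $A$ is not controlled.

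Second, your absorption of the zeroth-order potential term yields the wrong exponent. Cauchy--Schwarz gives $\int\s_a^{1-2\A}V^2w^2G_a\le\|V\|_\infty^2\,\tfrac{\delta^2}{\A}\int\s_a^{-2\A}w^2G_a$, and comparing with the good term $\sim\tfrac{\A^2}{\delta^2}\int\s_a^{-2\A}w^2G_a$ forces $\A\gtrsim\|V\|_\infty^{2/3}$, the Bourgain--Kenig exponent, not $\|V\|_\infty^{1/2}$. The square-root threshold --- the whole point of the lemma --- requires treating the potential contribution $V\w\,\Z\w$ as a signed term and integrating by parts against the vector field $\Z$ (the paper's $I_7$), which uses $V\in C^1_x$ and produces $(\|V\|_\infty+\|\n V\|_\infty)\int\s^{-2\A}w^2G$ with no loss of a factor of $t$; absorption into $\A\lambda\int\s^{-2\A}w^2G$ then needs only $\A^2\gtrsim\|V\|_1$. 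Your Cauchy--Schwarz route is the correct one only for the $1/2$-H\"older-in-$t$ piece $(V(x,t)-V(x,0))w$, where the extra factor $t\le1/\lambda$ restores the $[V]_{1/2}^{1/2}$ threshold.
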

\begin{proof}
The proof is divided into several steps.

\textbf{Step 1}: We first show that there exists a universal constant $N$ and $\delta \in (0,1)$ such that for all $\A \ge N(1+||V(\cdot, 0)||_1^{1/2})$ the following inequality holds
	\begin{align}\label{step1}
		&	\A^2 \int \s^{-2\A}w^2G + \int \s^{1-2\A}|\n w|^2\frac{\theta(\lambda t)}{t}G\\
		& \le N \int \s^{1-2\A}(\D(A(x,0) \n w)+w_t+V(x,0)w)^2 G +N^{2\A}\lambda^{2\A+N}\underset{t \ge a}{\operatorname{sup}}\int w^2
		+t|\n w|^2 dx\notag\\
		&+\s(a)^{-2\A}\left(-\frac{a}{N}\int |\n w (x,a)|^2G(x,a)dx + N \A \int w^2(x,a)G(x,a)dx\right).\notag
	\end{align}
	 for all $0<a\le \frac{1}{4\lambda}$ and $w \in C_{0}^{\infty}(B_4 \times [0,\frac{1}{2\lambda}))$, where $\lambda=\A/\delta^2.$

	 \medskip

\textbf{	Proof of Step 1}: For notational convenience, we will denote $A(x,0)$ and $V(x,0)$ and with slight abuse of notation  by $A$ or $A(x)$ and $V$ or $V(x).$ respectively.  All solid integrals will be supported in $B_4 \times [a,\frac{1}{2\lambda}].$ We will refrain from writing it. We set $\w(x,t) =\s^{-\A}e^{-\frac{|x|^2}{8t}}w(x,t),$ where $\s$ is as defined in Lemma \ref{sig}. Then $w(x,t)=\s^{\A}e^{\frac{|x|^2}{8t}}\w(x,t)$ and we have
	\begin{align}
		w_t=\A\s^{\A-1}\s'e^{\frac{|x|^2}{8t}}\w -\frac{|x|^2}{8t^2}\s^{\A}e^{\frac{|x|^2}{8t}}\w+ \s^{\A}e^{\frac{|x|^2}{8t}}\w_t
	\end{align}
and
\begin{align}
	\D(A \n w)&= \s^{\A}e^{\frac{|x|^2}{8t}} \D( A \n \w) + 2\s^{\A}e^{\frac{|x|^2}{8t}} \left\langle A \n \w, \frac{x}{4t} \right\rangle
	+ \s^{\A}e^{\frac{|x|^2}{8t}} \D \left(\frac{Ax}{4t}\right)  \w
	 +\s^{\A}e^{\frac{|x|^2}{8t}} \left\langle \frac{Ax}{4t}, \frac{x}{4t}\right\rangle.\notag
\end{align}
Hence we have 
\begin{align}\label{1st}
	&\int \s^{-2\A}e^{\frac{-|x|^2}{4t}}\left( w_t + \D(A \n w) + V w\right)^2t^{-\mu}\left(\frac{t \s'}{\s}\right)^{-1/2}\\
	&= \int \left( \frac{\A\s'}{\s}\w-\frac{|x|^2}{8t^2}\w+\w_t + \D(A \n \w)+ \left\langle A \n \w, \frac{x}{2t}\right\rangle +\frac{\D(Ax)}{4t}\w + \left\langle \frac{Ax}{4t}, \frac{x}{4t}\right\rangle\w + V \w \right)^2 t^{-\mu}\left(\frac{t \s'}{\s}\right)^{-1/2},
\end{align}
where $\mu=\frac{n}{2}-1.$
We now define $\Z\w:=2t\w_t + \langle A \n \w, x\rangle.$ We now use the numerical identity $(a+b)^2 \ge a^2 +2 ab$ in \eqref{1st} with $a= \Z\w$ and $b$  being the  remaining terms to find
\begin{align}\label{amgm}
	&\int \s^{-2\A}e^{\frac{-|x|^2}{4t}}\left( w_t + \D(A \n w) + V w\right)^2t^{-\mu}\left(\frac{t \s'}{\s}\right)^{-1/2}\\
	 &\ge \int \left(\frac{\Z\w}{2t}\right)^2t^{-\mu}\left(\frac{t \s'}{\s}\right)^{-1/2} + 2 \int \frac{\A \s'}{\s}\w\frac{\Z \w}{2t} t^{-\mu}\left(\frac{t \s'}{\s}\right)^{-1/2}\notag\\
	 &-2\int \frac{|x|^2}{8t^2}\w\frac{\Z \w}{2t} t^{-\mu}\left(\frac{t \s'}{\s}\right)^{-1/2} + 2\int \frac{\D(Ax)}{4t}\w\frac{\Z \w}{2t} t^{-\mu}\left(\frac{t \s'}{\s}\right)^{-1/2}+ 2\int \left\langle \frac{Ax}{4t}, \frac{x}{4t}\right\rangle\w \frac{\Z \w}{2t}t^{-\mu}\left(\frac{t \s'}{\s}\right)^{-1/2}\notag\\
	 &+2 \int \D(A\n\w) \frac{\Z \w}{2t}t^{-\mu}\left(\frac{t \s'}{\s}\right)^{-1/2} + 2 \int V\w \frac{\Z \w}{2t}t^{-\mu}\left(\frac{t \s'}{\s}\right)^{-1/2}\notag\\
	 &=I_1+I_2+I_3+I_4+I_5+I_6+I_7.\notag
\end{align}
We now simplify each term individually. We start with $I_2.$ Using $2\w\Z\w=\Z\w^2$ and after writing $ \frac{\A \s'}{\s}=\frac{\A}{t} \frac{t \s'}{\s},$ we get
\begin{align*}
	I_2&=2 \int \frac{\A \s'}{\s}\w\frac{\Z \w}{2t} t^{-\mu}\left(\frac{t \s'}{\s}\right)^{-1/2}=\A\int\frac{1}{t} \frac{t \s'}{\s}\frac{\Z \w^2}{2t} t^{-\mu}\left(\frac{t \s'}{\s}\right)^{-1/2}\\
	&=\A\int\left((\w^2)_t + \left\langle A\n\w^2,\frac{x}{2t}\right\rangle\right) t^{-\mu-1}\left(\frac{t \s'}{\s}\right)^{1/2}.
\end{align*}
We now use the divergence theorem to obtain 
\begin{align*}
	I_2=-\A\int\w^2 \left(t^{-\mu-1}\left(\frac{t \s'}{\s}\right)^{1/2}\right)' -\A\int_{\{t=a\}}\w^2 t^{-\mu-1}\left(\frac{t \s'}{\s}\right)^{1/2}-\frac{\A}{2}\int \D(Ax)\w^2 t^{-\mu-2}\left(\frac{t \s'}{\s}\right)^{1/2}.
\end{align*}
Since $A$ is Lipschitz in $x$-variable  and $A(0,0)= \mathbb I_n$, we have $\D(Ax)=n+O(|x|).$ Hence after simplification we find
\begin{align*}
	I_2&=\A(\mu+1)\int t^{-\mu-2}\left(\frac{t \s'}{\s}\right)^{1/2} \w^2 -\frac{\A}{2}\int t^{-\mu-1}\left(\frac{t \s'}{\s}\right)^{-1/2}\left(\frac{t \s'}{\s}\right)'\w^2-\A\int_{\{t=a\}}t^{-\mu-1}\left(\frac{t \s'}{\s}\right)^{1/2} \w^2\\
	& -\frac{\A n}{2}\int t^{-\mu-2}\left(\frac{t \s'}{\s}\right)^{1/2} \w^2 +\A \int O(|x|)t^{-\mu-2}\left(\frac{t \s'}{\s}\right)^{1/2} \w^2.
\end{align*}
We now recall $\mu=\frac{n}{2}-1$ and use \eqref{ode} to find 
\begin{align}\label{i22}
	I_2&= \frac{\A}{2}\int t^{-n/2}\left(\frac{t \s'}{\s}\right)^{1/2}\frac{\theta (\lambda t) }{t}\w^2-\A\int_{\{t=a\}}t^{-n/2}\left(\frac{t \s'}{\s}\right)^{1/2} \w^2
	 +\A \int O(|x|)t^{-n/2-1}\left(\frac{t \s'}{\s}\right)^{1/2} \w^2.
\end{align}
To estimate the last term in the right-hand side of the inequality \eqref{i22}, we recall $\w= \s^{-\A}e^{-|x|^2/8t}w$ and get for some universal $N$
\begin{align*}
	\left|\A \int O(|x|)t^{-n/2-1}\left(\frac{t \s'}{\s}\right)^{1/2} \w^2\right| \le \A N \int \frac{|x|}{t}\left(\frac{t \s'}{\s}\right)^{1/2} w^2\s^{-2\A}G.
\end{align*}
We now use the Lemma \ref{logi} and the fact that $\left(\frac{t \s'}{\s}\right)^{1/2}$ is bounded from above and below  by positive constants to   obtain 
\begin{align}\label{i2l}
\alpha	\int \frac{|x|}{t}\left(\frac{t \s'}{\s}\right)^{1/2} w^2\s^{-2\A}G \le N N^{2\A}\lambda^{2\A+N} \int w^2 +N\delta \A \int \s^{-2\A}\frac{\theta(\lambda t)}{t}w^2G.
\end{align}
Substitute the bound from \eqref{i2l} in \eqref{i22} and write $\w$ in terms of $w$ to find 
\begin{align}\label{i2ff}
		I_2&\ge \frac{\A}{2}\int \s^{-2\A}\left(\frac{t \s'}{\s}\right)^{1/2}\frac{\theta(\lambda t) }{t}w^2G-\A\int_{\{t=a\}}\s^{-2\A}\left(\frac{t \s'}{\s}\right)^{1/2} w^2 G\\
		&-N N^{2\A}\lambda^{2\A+N} \int w^2 -N\delta \A \int \s^{-2\A}\frac{\theta(\lambda t)}{t}w^2G.
\end{align}
Next we consider $I_3.$ Using  divergence theorem we find
\begin{align}\label{i31}
	I_3=-2\int \frac{|x|^2}{8t^2}\w\frac{\Z \w}{2t} t^{-\mu}\left(\frac{t \s'}{\s}\right)^{-1/2} &=-\int \frac{|x|^2}{8t^2}\frac{\Z \w^2}{2t} t^{-\mu}\left(\frac{t \s'}{\s}\right)^{-1/2}\\
	&=\int \frac{|x|^2}{8} \left(t^{-\mu-2}\left(\frac{t \s'}{\s}\right)^{-\frac{1}{2}}\right)'\w^2+\int_{\{t=a\}} \frac{|x|^2}{8} t^{-\mu-2}\left(\frac{t \s'}{\s}\right)^{-\frac{1}{2}}\w^2\notag\\
	&+\int \frac{\D(|x|^2Ax)}{16}t^{-\mu-3}\left(\frac{t \s'}{\s}\right)^{-\frac{1}{2}}\w^2.\notag
\end{align}
Again using $A(x)$ is Lipschitz in $x$-variable and $A(0,0)=\mathbb I_n$,  we have $\D(|x|^2 Ax)= (n+2)|x|^2 +|x|^2O(|x|)$ and  thus after simplifying the first term in right hand side of \eqref{i31} we obtain
\begin{align}\label{i32}
	I_3&=\frac{-\mu-2}{8}\int |x|^2 t^{-\mu-3}\left(\frac{t \s'}{\s}\right)^{-\frac{1}{2}}\w^2-\frac{1}{16}\int |x|^2 t^{-\mu-2}\left(\frac{t \s'}{\s}\right)^{-\frac{3}{2}}\left(\frac{t \s'}{\s}\right)'\w^2+\int_{\{t=a\}} \frac{|x|^2}{8} t^{-\mu-2}\left(\frac{t \s'}{\s}\right)^{-\frac{1}{2}}\w^2\\
	&+\frac{n+2}{16}\int |x|^2 t^{-\mu-3}\left(\frac{t \s'}{\s}\right)^{-\frac{1}{2}}\w^2 +O(1)\int |x|^3 t^{-\mu-3}\left(\frac{t \s'}{\s}\right)^{-\frac{1}{2}}\w^2.\notag
\end{align}
Since $\mu =\frac{n}{2}-1$, therefore $\frac{-\mu-2}{8}=-\frac{n+2}{16}.$  Hence after using \eqref{ode}, \eqref{i32} becomes
\begin{align}\label{i33}
		I_3&=\frac{1}{16}\int |x|^2 t^{-n/2-1}\left(\frac{t \s'}{\s}\right)^{-\frac{1}{2}}\frac{\theta(\lambda t)}{t}\w^2+\int_{\{t=a\}} \frac{|x|^2}{8} t^{-\mu-2}\left(\frac{t \s'}{\s}\right)^{-\frac{1}{2}}\w^2\\
	&+O(1)\int \frac{|x|^3}{t^2} t^{-n/2}\left(\frac{t \s'}{\s}\right)^{-\frac{1}{2}}\w^2.\notag
\end{align}
We estimate the last term of right-hand side of \eqref{i33} using Lemma \ref{logi} in the similar way as we did the last term in the right-hand side of the inequality \eqref{i22} to find
\begin{align}\label{i3l}
	\left|O(1)\int \frac{|x|^3}{t^2} t^{-n/2}\left(\frac{t \s'}{\s}\right)^{-\frac{1}{2}}\w^2\right| \le N \A N^{2\A}\lambda^{2\A+N} \int w^2 +N\A\delta  \int \s^{-2\A}\frac{\theta(\lambda t)}{t}w^2G \left(\frac{t \s'}{\s}\right)^{1/2}.
\end{align}
Hence using \eqref{i3l} in \eqref{i33} and writing $\w$ in term of $w$, we get
\begin{align}\label{i3ff}
		I_3& \ge \frac{1}{16}\int \frac{|x|^2}{t} \s^{-2\A}\left(\frac{t \s'}{\s}\right)^{-\frac{1}{2}}\frac{\theta(\lambda t)}{t}w^2 G+\int_{\{t=a\}} \frac{|x|^2}{8t} \s^{-2\A}\left(\frac{t \s'}{\s}\right)^{-\frac{1}{2}}w^2 G\\
		 &-N\A N^{2\A}\lambda^{2\A+N} \int w^2 -N\A\delta  \int \s^{-2\A}\frac{\theta(\lambda t)}{t}w^2G .\notag
\end{align}
Next we estimate $I_4.$ We use $\D(Ax)=n+O(|x|)$ to write $I_4$ as follows
\begin{align}\label{i41}
	I_4= 2\int \frac{\D(Ax)}{4t}\w\frac{\Z \w}{2t} t^{-\mu}\left(\frac{t \s'}{\s}\right)^{-1/2}= 2\int \frac{n}{4t}\w\frac{\Z \w}{2t} t^{-\mu}\left(\frac{t \s'}{\s}\right)^{-1/2} +  O(1)\int \frac{|x|}{4t}\w\frac{\Z \w}{2t} t^{-\mu}\left(\frac{t \s'}{\s}\right)^{-1/2}.
\end{align} 
First we focus on the first term of the right hand side of \eqref{i41}. After using $2\w \Z \w = \Z \w^2$ we find
\begin{align}\label{i4a}
	2\int \frac{n}{4t}\w\frac{\Z \w}{2t} t^{-\mu}\left(\frac{t \s'}{\s}\right)^{-1/2}= \frac{n}{4}\int\left((\w^2)_t+\langle A \n \w^2, \frac{x}{2t}\rangle\right) t^{-\mu-1}\left(\frac{t \s'}{\s}\right)^{-1/2}.
\end{align}
We now use integration by parts formula in \eqref{i4a}  to obtain 
\begin{align}\label{4a1}
	2\int \frac{n}{4t}\w\frac{\Z \w}{2t} t^{-\mu}\left(\frac{t \s'}{\s}\right)^{-1/2}&=-\frac{n}{4}\int\left(t^{-\mu-1}\left(\frac{t \s'}{\s}\right)^{-1/2}\right)'\w^2 -\frac{n}{4}\int_{\{t=a\}} t^{-\mu-1}\left(\frac{t \s'}{\s}\right)^{-1/2}\w^2\\ &-\frac{n}{8}\int \D(Ax) \w^2 t^{-\mu-2}\left(\frac{t \s'}{\s}\right)^{-1/2}.\notag
\end{align}
We further simplify the  first term in right-hand side of \eqref{4a1} and use $\D(Ax)=n+O(|x|)$ to get
\begin{align}\label{i45}
	2\int \frac{n}{4t}\w\frac{\Z \w}{2t} t^{-\mu}\left(\frac{t \s'}{\s}\right)^{-1/2}&=\frac{n}{4}(\mu+1)\int t^{-\mu-2}\left(\frac{t \s'}{\s}\right)^{-1/2}\w^2 +\frac{n}{8}\int t^{-\mu-1}\left(\frac{t \s'}{\s}\right)^{-3/2}\left(\frac{t \s'}{\s}\right)'\w^2\\
	&-\frac{n}{4}\int_{\{t=a\}} t^{-\mu-1}\left(\frac{t \s'}{\s}\right)^{-1/2}\w^2-\frac{n^2}{8}\int \w^2 t^{-\mu-2}\left(\frac{t \s'}{\s}\right)^{-1/2}\notag\\
	&+O(1)\int |x|\w^2 t^{-\mu-2}\left(\frac{t \s'}{\s}\right)^{-1/2}.\notag
\end{align} 
Since $\mu =\frac{n}{2}-1,$ we have $\frac{n}{4}(\mu +1)= \frac{n^2}{8}.$ Also, we use \eqref{ode} in the second term of the right-hand side of \eqref{i45}.  Hence \eqref{i41} becomes
\begin{align}\label{i42}
		I_4&\ge -\frac{n}{8}\int t^{-n/2}\left(\frac{t \s'}{\s}\right)^{-1/2}\frac{\theta(\lambda t)}{t}\w^2
		-\frac{n}{4}\int_{\{t=a\}} t^{-n/2}\left(\frac{t \s'}{\s}\right)^{-1/2}\w^2\\&-N\int\frac{|x|}{t}\w^2 t^{-n/2}\left(\frac{t \s'}{\s}\right)^{-1/2}
		-N\int \frac{|x|}{4t}|\w|\frac{|\Z \w|}{2t} t^{-\mu}\left(\frac{t \s'}{\s}\right)^{-1/2}.\notag
\end{align} 
We now use Cauchy-Schwarz inequality in the last term of right-hand side of \eqref{i42} to get 
\begin{align}\label{i4cs}
	N\int \frac{|x|}{4t}\w\frac{\Z \w}{2t} t^{-\mu}\left(\frac{t \s'}{\s}\right)^{-1/2} \le N^2\int \frac{|x|^2}{16t^2}\w^2 t^{-\mu}\left(\frac{t \s'}{\s}\right)^{-1/2}+\frac{1}{4}\int \left(\frac{\Z \w}{2t}\right)^2 t^{-\mu}\left(\frac{t \s'}{\s}\right)^{-1/2}.
\end{align}
Using \eqref{i4cs} in \eqref{i42} we find
\begin{align}\label{i43}
		I_4&\ge -\frac{n}{8}\int t^{-n/2}\left(\frac{t \s'}{\s}\right)^{-1/2}\frac{\theta(\lambda t)}{t}\w^2
	-\frac{n}{4}\int_{\{t=a\}} t^{-n/2}\left(\frac{t \s'}{\s}\right)^{-1/2}\w^2\\&-N\int\frac{|x|}{t}\w^2 t^{-n/2}\left(\frac{t \s'}{\s}\right)^{-1/2}
-N\int \frac{|x|^2}{16t^2}\w^2 t^{-\mu}\left(\frac{t \s'}{\s}\right)^{-1/2}-\frac{1}{4}\int \left(\frac{\Z \w}{2t}\right)^2 t^{-\mu}\left(\frac{t \s'}{\s}\right)^{-1/2}.\notag
\end{align}
After writing $\w=\s^{-\A}e^{-|x|^2/8t}w$ and using Lemma \ref{logi} in the third and fourth term of right-hand side of \eqref{i43} we obtain 
\begin{align}\label{i4ff}
		I_4&\ge -\frac{n}{8}\int \s^{-2\A}\left(\frac{t \s'}{\s}\right)^{-1/2}\frac{\theta(\lambda t)}{t}w^2G
	-\frac{n}{4}\int_{\{t=a\}} \s^{-2\A}\left(\frac{t \s'}{\s}\right)^{-1/2}w^2G\\&- NN^{2\A}\lambda^{2\A+N}\int w^2- N \delta \int \s^{-2\A}\frac{\theta(\lambda t)}{t}w^2 G -\frac{1}{4}\int \left(\frac{\Z \w}{2t}\right)^2 t^{-\mu}\left(\frac{t \s'}{\s}\right)^{-1/2}.\notag
\end{align}
Over here, we would like to mention that In order to estimate the fourth term in \eqref{i43} above,   we used the fact that  $\frac{|x|^2}{16t^2}$ can be upper bounded by $\frac{|x|}{4t^2}$ since $w(\cdot, t)$ is supported in $B_4$. \\
We then simplify $I_5$ by first writing $2 \w \Z \w = \Z \w^2$ to get 
\begin{align*}
I_5=	2\int \left\langle \frac{Ax}{4t}, \frac{x}{4t}\right\rangle\w \frac{\Z \w}{2t}t^{-\mu}\left(\frac{t \s'}{\s}\right)^{-1/2}=\int \frac{\langle Ax, x \rangle}{16t^2}\left( (\w^2)_t
+ \langle A \n \w^2, \frac{x}{2t}\rangle\right)t^{-\mu}\left(\frac{t \s'}{\s}\right)^{-1/2}.
\end{align*} 
 Integration by parts, we  obtain
\begin{align*}
	I_5&=-\int \frac{\langle Ax, x \rangle}{16}\w^2\left( t^{-\mu-2}\left(\frac{t \s'}{\s}\right)^{-1/2}\right)'
	-\int_{\{t=a\}} \frac{\langle Ax, x \rangle}{16t^2}\w^2 t^{-\mu}\left(\frac{t \s'}{\s}\right)^{-1/2} \\
	&-\int \frac{\D(\langle Ax, x \rangle A x)}{32t^3}\w^2 t^{-\mu}\left(\frac{t \s'}{\s}\right)^{-1/2}.
\end{align*}
First note that $\D(\langle Ax, x\rangle A x)= (n+2)|x|^2 + O(1)|x|^3.$ Consequently after simplification we get the following
\begin{align*}
I_5&=\frac{\mu +2}{16}\int \langle Ax, x \rangle\w^2 t^{-\mu-3}\left(\frac{t \s'}{\s}\right)^{-1/2}+\frac{1}{32} \int \langle Ax, x \rangle\w^2 t^{-\mu-2}\left(\frac{t \s'}{\s}\right)^{-3/2}\left(\frac{t \s'}{\s}\right)' \\
&-\int_{\{t=a\}} \frac{\langle Ax, x \rangle}{16t^2}\w^2 t^{-\mu}\left(\frac{t \s'}{\s}\right)^{-1/2} 
	-\frac{(n+2)}{32}\int \frac{|x|^2}{t^3}\w^2 t^{-\mu}\left(\frac{t \s'}{\s}\right)^{-1/2}\\&-O(1)\int \frac{|x|^3}{32t^3}\w^2 t^{-\mu}\left(\frac{t \s'}{\s}\right)^{-1/2}.
\end{align*}
After  re-writing $A=A-\mathbb I_n+\mathbb I_n$  and using \eqref{ode} we obtain
\begin{align*}
	I_5&=\frac{\mu +2}{16}\int \langle (A-\mathbb I_n)x, x \rangle \w^2 t^{-\mu-3}\left(\frac{t \s'}{\s}\right)^{-1/2}+\frac{\mu +2}{16}\int |x|^2\w^2 t^{-\mu-3}\left(\frac{t \s'}{\s}\right)^{-1/2}\\
	&-\frac{1}{32} \int \langle (A-\mathbb I_n)x, x \rangle\w^2 t^{-\mu-2}\left(\frac{t \s'}{\s}\right)^{-1/2}\frac{\theta(\lambda t)}{t} -\frac{1}{32} \int |x|^2\w^2 t^{-\mu-2}\left(\frac{t \s'}{\s}\right)^{-1/2}\frac{\theta(\lambda t)}{t}\\
	& -\int_{\{t=a\}} \frac{\langle (A-\mathbb I_n)x, x \rangle}{16t^2}\w^2 t^{-\mu}\left(\frac{t \s'}{\s}\right)^{-1/2} -\int_{\{t=a\}} \frac{|x|^2}{16t^2}\w^2 t^{-\mu}\left(\frac{t \s'}{\s}\right)^{-1/2} 
	-\frac{(n+2)}{32}\int \frac{|x|^2}{t^3}\w^2 t^{-\mu}\left(\frac{t \s'}{\s}\right)^{-1/2}\\
	&-O(1)\int \frac{|x|^3}{32t^3}\w^2 t^{-\mu}\left(\frac{t \s'}{\s}\right)^{-1/2}.
\end{align*}
Since $\mu =\frac{n}{2}-1,$ therefore $\frac{\mu +2}{16}=\frac{n+2}{32}.$ Hence we find
\begin{align}\label{i52}
	I_5&=\frac{\mu +2}{16}\int \langle (A-\mathbb I_n)x, x \rangle \w^2 t^{-\mu-3}\left(\frac{t \s'}{\s}\right)^{-1/2}\\
		&-\frac{1}{32} \int \langle (A-\mathbb I_n)x, x \rangle\w^2 t^{-\mu-2}\left(\frac{t \s'}{\s}\right)^{-1/2}\frac{\theta(\lambda t)}{t}  -\frac{1}{32} \int |x|^2\w^2 t^{-\mu-2}\left(\frac{t \s'}{\s}\right)^{-1/2}\frac{\theta(\lambda t)}{t}\notag \\
		&-\int_{\{t=a\}} \frac{\langle (A-\mathbb I_n)x, x \rangle}{16t^2}\w^2 t^{-\mu}\left(\frac{t \s'}{\s}\right)^{-1/2}  -\int_{\{t=a\}} \frac{|x|^2}{16t^2}\w^2 t^{-\mu}\left(\frac{t \s'}{\s}\right)^{-1/2} -O(1)\int \frac{|x|^3}{32t^3}\w^2 t^{-\mu}\left(\frac{t \s'}{\s}\right)^{-1/2}.\notag
	\end{align}
For the first term in right-hand side of the \eqref{i52}, we substitute the value of $\w$ in terms of $w$, use  $|A(x,0)- \mathbb I_n|= O(|x|)$  and $\left(\frac{t \s'}{\s}\right)^{-1/2}$ is bounded, to obtain
\begin{align}\label{i521}
	\left|\frac{\mu +2}{16}\int \langle (A-\mathbb I_n)x, x \rangle \w^2 t^{-\mu-3}\left(\frac{t \s'}{\s}\right)^{-1/2}\right| \le N\int \frac{|x|^3}{t^2}\s^{-2\A}w^2 G .
\end{align}
Now for the second term in right hand side of the \eqref{i52}, along with the facts used to bound first term, we also use the fact that $\theta$ is bounded if $\lambda t \le 1$ to find
\begin{align}\label{i522}
	\left|\frac{1}{32} \int \langle (A-\mathbb I_n)x, x \rangle\w^2 t^{-\mu-2}\left(\frac{t \s'}{\s}\right)^{-1/2}\frac{\theta(\lambda t)}{t}\right| \le N \int \frac{|x|^3}{t^2}\s^{-2\A}w^2 G.
\end{align}
Therefore using \eqref{i521} and \eqref{i522} in \eqref{i52} we get
\begin{align}\label{i53}
	I_5 &\ge -\frac{1}{32} \int |x|^2\w^2t^{-\mu-2} \left(\frac{t \s'}{\s}\right)^{-1/2}\frac{\theta(\lambda t)}{t}
 -N\int \frac{|x|^3}{t^2}\s^{-2\A}w^2G\\
 &	-\int_{\{t=a\}} \frac{\langle (A-\mathbb I_n)x, x \rangle}{16t^2}\w^2 t^{-\mu}\left(\frac{t \s'}{\s}\right)^{-1/2}	-\int_{\{t=a\}} \frac{|x|^2}{16t^2}\w^2 t^{-\mu}\left(\frac{t \s'}{\s}\right)^{-1/2}.\notag
\end{align}
  Now the second term of right-hand side of \eqref{i53} is estimated using Lemma \ref{logi}  and we thus  find
\begin{align}\label{i5ff}
		I_5 &\ge -\frac{1}{32} \int \frac{|x|^2}{t}\s^{-2\A}w^2 \left(\frac{t \s'}{\s}\right)^{-1/2}\frac{\theta(\lambda t)}{t}
	-\int_{\{t=a\}} \frac{\langle (A-\mathbb I_n)x, x \rangle}{16t}\s^{-2\A}w^2G \left(\frac{t \s'}{\s}\right)^{-1/2}\\
	 & -\int_{\{t=a\}} \frac{|x|^2}{16t}\s^{-2\A}w^2G\left(\frac{t \s'}{\s}\right)^{-1/2} -N\A N^{2\A}\lambda^{2\A+N}\int w^2- N \A \delta \int \s^{-2\A}\frac{\theta(\lambda t)}{t}w^2 G.\notag
\end{align}
We now simplify $I_6.$
\begin{align*}
	I_6&= 2 \int t^{-\mu}\D(A\n \w)\frac{ \Z \w}{2t}\left(\frac{t \s'}{\s}\right)^{-1/2}\\
	&=2 \int t^{-\mu}\D(A\n \w)\w_t\left(\frac{t \s'}{\s}\right)^{-1/2} + \int t^{-\mu-1}\D(A\n \w)\langle A \n \w, x\rangle \left(\frac{t \s'}{\s}\right)^{-1/2}\\
	&=I_6^1+I_6^2.
\end{align*}
We first look at $I_6^1.$ We use integrate by parts formula in $x$-variable to obtain
\begin{align*}
	I_6^1&=2 \int t^{-\mu}\D(A\n \w)\w_t\left(\frac{t \s'}{\s}\right)^{-1/2}\\
	&=-2 \int t^{-\mu}\langle A\n \w, \n \w_t\rangle \left(\frac{t \s'}{\s}\right)^{-1/2}\\
	&=- \int t^{-\mu}\left(\frac{t \s'}{\s}\right)^{-1/2}\frac{d}{dt}\langle A\n \w, \n \w\rangle.
\end{align*}
Subsequently,   by integrating by parts  in  the $t$-variable we find
\begin{align*}
		I_6^1&= \int \langle A\n \w, \n \w\rangle \left(t^{-\mu}\left(\frac{t \s'}{\s}\right)^{-1/2}\right)' +\int_{\{t=a\}} \langle A\n \w, \n \w\rangle t^{-\mu}\left(\frac{t \s'}{\s}\right)^{-1/2}.
\end{align*}
After simplifying and using \eqref{ode} we obtain
\begin{align}\label{i61}
		I_6^1&= -\mu \int \langle A\n \w, \n \w\rangle t^{-\mu-1}\left(\frac{t \s'}{\s}\right)^{-1/2} +\frac{1}{2}\int \langle A\n \w, \n \w\rangle t^{-\mu}\left(\frac{t \s'}{\s}\right)^{-1/2}\frac{\theta(\lambda t)}{t}\\
		&+\int_{\{t=a\}} \langle A\n \w, \n \w\rangle t^{-\mu}\left(\frac{t \s'}{\s}\right)^{-1/2}.\notag
	\end{align}
We will now focus our attention on $I_6^2.$ After using integration by parts formula in $x$-variable we get
\begin{align*}
	I_6^2&=-\int t^{-\mu-1}(A\n \w)\cdot \n(\langle A \n \w, x\rangle) \left(\frac{t \s'}{\s}\right)^{-1/2}\\
	&=-\int t^{-\mu-1} \sum_{i,j,k,l} a_{ij}\w_i\frac{\partial}{\partial x_j}(a_{kl}\w_k x_l) \left(\frac{t \s'}{\s}\right)^{-1/2}\\
	&=-\int t^{-\mu-1}\sum_{i,j,k,l} a_{ij}\w_i\partial_{x_j}a_{kl}\w_k x_l \left(\frac{t \s'}{\s}\right)^{-1/2}-\int t^{-\mu-1}\sum_{i,j,k,l} a_{ij}\w_i a_{kl}\w_{kj} x_l \left(\frac{t \s'}{\s}\right)^{-1/2}\\
	&-\int t^{-\mu-1}\sum_{i,j,k,l} a_{ij}\w_ia_{kl}\w_k \delta_{lj} \left(\frac{t \s'}{\s}\right)^{-1/2}.
\end{align*}
We next observe that \begin{align}
	\langle Ax, \n \langle A \n \w, \n \w \rangle\rangle&=\sum_{i,j,k,l}a_{kl}x_l \partial_{x_k}a_{ij}\w_i \w_j +2 \sum_{i,j,k,l}a_{kl}x_la_{ij}\w_i \w_{jk}\\
&	=\sum_{i,j,k,l}a_{kl}x_l \partial_{x_k}a_{ij}\w_i \w_j+2\sum_{i,j,k,l} a_{ij}\w_ia_{kl}\w_{kj} x_l.\notag
\end{align}
 Hence $I_6^2$
 becomes 
 \begin{align}\label{i621}
 	I_6^2&=-\int t^{-\mu-1} \sum_{i,j,k,l} a_{ij}\w_i\partial x_ja_{kl}\w_k x_l \left(\frac{t \s'}{\s}\right)^{-1/2} + O(1) \int |x| t^{-\mu-1} |\nabla \w|^2 \\
 	&-\frac{1}{2}\int t^{-\mu-1}\langle Ax, \n \langle A \n \w, \n \w \rangle\rangle\left(\frac{t \s'}{\s}\right)^{-1/2}
 	-\int t^{-\mu-1}|A\n \w|^2\left(\frac{t \s'}{\s}\right)^{-1/2}.\notag
\end{align}
We again apply integration by parts formula in $x$-variable to the third  term on the right-hand side of \eqref{i621} to obtain 
\begin{align}\label{i622}
	I_6^2&=\int t^{-\mu-1}\sum_{i,j,k,l} a_{ij}\w_i\partial_{x_j}a_{kl}\w_k x_l \left(\frac{t \s'}{\s}\right)^{-1/2} + O(1) \int |x| t^{-\mu-1} |\nabla \w|^2 \\
	&+\frac{1}{2}\int t^{-\mu-1}\D( Ax) \langle A \n \w, \n \w \rangle\left(\frac{t \s'}{\s}\right)^{-1/2}
	-\int t^{-\mu-1}|A\n \w|^2\left(\frac{t \s'}{\s}\right)^{-1/2}.\notag
\end{align}
Again using $\D(Ax)=n +O(1)|x|$ in the third term and by splitting $A$ as  $A=A-\mathbb I_n+\mathbb I_n$ in the last term of right-hand side of \eqref{i622} and also using $|A-\mathbb I_n|= O(|x|)$, we find
\begin{align}\label{i6a}
	I_6^2&= O(1) \int |x| t^{-\mu-1} |\nabla \w|^2
	+\left(\frac{n}{2}-1\right)\int t^{-\mu-1} \langle A \n \w, \n \w \rangle\left(\frac{t \s'}{\s}\right)^{-1/2}.
\end{align}

Hence using \eqref{i61} and \eqref{i6a}, $I_6$ becomes 
\begin{align}\label{i6r}
	I_6&=-\mu \int \langle A\n \w, \n \w\rangle t^{-\mu-1}\left(\frac{t \s'}{\s}\right)^{-1/2} +\frac{1}{2}\int \langle A\n \w, \n \w\rangle t^{-\mu}\left(\frac{t \s'}{\s}\right)^{-1/2}\frac{\theta(\lambda t) }{t}\\
	&+\int_{\{t=a\}} \langle A\n \w, \n \w\rangle t^{-\mu}\left(\frac{t \s'}{\s}\right)^{-1/2}
	+\frac{n-2}{2}\int t^{-\mu-1} \langle A \n \w, \n \w \rangle\left(\frac{t \s'}{\s}\right)^{-1/2}\notag\\
	&+O(1)\int |x|t^{-\mu-1} | \n \w|^2\left(\frac{t \s'}{\s}\right)^{-1/2}.\notag
\end{align}
Since $\mu= \frac{n}{2}-1$, the  following terms in the right hand side of \eqref{i6r}, i.e. 
\[
-\mu \int \langle A\n \w, \n \w\rangle t^{-\mu-1}\left(\frac{t \s'}{\s}\right)^{-1/2}\]
and
\[
\frac{n-2}{2}\int t^{-\mu-1} \langle A \n \w, \n \w \rangle\left(\frac{t \s'}{\s}\right)^{-1/2}\]
cancel each other.
 Consequently we get
\begin{align}\label{i6f}
	I_6&= \frac{1}{2}\int \langle A\n \w, \n \w\rangle t^{-\mu}\left(\frac{t \s'}{\s}\right)^{-1/2}\frac{\theta (\lambda t)}{t}
	+\int_{\{t=a\}} \langle A\n \w, \n \w\rangle t^{-\mu}\left(\frac{t \s'}{\s}\right)^{-1/2}\\
&+O(1)\int |x|t^{-\mu-1} | \n \w|^2\left(\frac{t \s'}{\s}\right)^{-1/2}.\notag
	\end{align}
We now recall $ \n \w = \s^{-\A}e^{-|x|^2/8t}\n w -\frac{x}{4t}\s^{-\A}e^{-|x|^2/8t}w.$ This by AM-GM inequality implies  $$ |\n \w|^2 \le 2\s^{-2\A}e^{-|x|^2/4t}|\n w|^2 +\frac{|x|^2}{8t^2}\s^{-2\A}e^{-|x|^2/4t}w^2.$$
From Lemma \ref{sig}, we have $t/N\le \s\le t$ and $\left(\frac{t \s'}{\s}\right)^{-1/2}$ is bounded. Therefore last term in \eqref{i6f} can be estimated as
\begin{align}\label{l1}
	O(1)\int |x|t^{-\mu-1} | \n \w|^2\left(\frac{t \s'}{\s}\right)^{-1/2} \le O(1)\int \s^{1-2\A}\frac{|x|}{t}|\n w|^2 G + O(1)\int \s^{-2\A}\frac{|x|^3}{t^2}w^2G.
\end{align}
We now use Lemma \ref{logi} to obtain 
\begin{align}\label{l2}
	O(1)\int |x|t^{-\mu-1} | \n \w|^2\left(\frac{t \s'}{\s}\right)^{-1/2} \ge -NN^{2\A}\lambda^{2\A+N}\int t|\n w|^2- N \delta \int \s^{1-2\A}\frac{\theta(\lambda t)}{t}|\n w|^2 G\\
	-\A NN^{2\A}\lambda^{2\A+N}\int w^2- \A N \delta \int \s^{-2\A}\frac{\theta(\lambda t)}{t}w^2 G.\notag
\end{align}
Hence using \eqref{l2} in \eqref{i6f} we  deduce the following inequality
\begin{align}\label{it}
	I_6 &\ge \frac{1}{2}\int \langle A\n \w, \n \w\rangle t^{-\mu}\left(\frac{t \s'}{\s}\right)^{-1/2}\frac{\theta (\lambda t)}{t}
	+\int_{\{t=a\}} \langle A\n \w, \n \w\rangle t^{-\mu}\left(\frac{t \s'}{\s}\right)^{-1/2}\\
	&-NN^{2\A}\lambda^{2\A+N}\int t|\n w|^2- N \delta \int \s^{1-2\A}\frac{\theta(\lambda t)}{t}|\n w|^2 G\notag\\
	&-\A NN^{2\A}\lambda^{2\A+N}\int w^2- \A N \delta \int \s^{-2\A}\frac{\theta(\lambda t)}{t}w^2 G.\notag
\end{align}
We  now estimate the boundary term in \eqref{it}, i,e. the integral $\int_{\{t=a\}} \langle A\n \w, \n \w\rangle t^{-\mu}\left(\frac{t \s'}{\s}\right)^{-1/2} $. By recalling the definition of $\w$ in term of $w,$ 
 we have $$\langle A\n \w, \n \w\rangle = \s^{-2\A}e^{-|x|^2/4t}\left(\langle A \n w, \n w\rangle -2\langle w \frac{Ax}{4t}, \n w\rangle + \frac{w^2}{16t^2}\langle Ax,x \rangle\right).$$
Furthermore using $\mu =\frac{n}{2}-1$ we get 
\begin{align}\label{i6b}
	\int_{\{t=a\}} \langle A\n \w, \n \w\rangle t^{-\mu}\left(\frac{t \s'}{\s}\right)^{-1/2}&=\int_{\{t=a\}}\s^{-2\A}e^{-\frac{|x|^2}{4t}}\left(\langle A \n w, \n w\rangle -\langle w \frac{Ax}{2t}, \n w\rangle + \frac{w^2}{16t^2}\langle Ax,x \rangle\right) t^{-\frac{n}{2}+1}\left(\frac{t \s'}{\s}\right)^{-\frac12}\notag\\
	&=\int_{\{t=a\}}t\s^{-2\A}\langle A \n w, \n w\rangle G\left(\frac{t \s'}{\s}\right)^{-\frac12}-\frac{1}{2}\int_{\{t=a\}}\s^{-2\A}\langle w Ax, \n w\rangle G\left(\frac{t \s'}{\s}\right)^{-\frac12}\\
	&+\frac{1}{16}\int_{\{t=a\}}\s^{-2\A}\frac{w^2}{t}\langle Ax,x \rangle G\left(\frac{t \s'}{\s}\right)^{-\frac12}.\notag
\end{align}
We now focus our attention on the second term in the right-hand side of \eqref{i6b}. We first write $w \n w = \n w^2/2$ and then use integration by parts formula to obtain 
\begin{align*}
	&-\frac{1}{2}\int_{\{t=a\}}\s^{-2\A}\langle w Ax, \n w\rangle G\left(\frac{t \s'}{\s}\right)^{-\frac12}=-\frac{1}{4}\int_{\{t=a\}}\s^{-2\A}\langle  Ax, \n w^2\rangle G\left(\frac{t \s'}{\s}\right)^{-\frac12}\\
	&=\frac{1}{4}\int_{\{t=a\}}\s^{-2\A}w^2 \D(Ax)G\left(\frac{t \s'}{\s}\right)^{-\frac12} +\frac{1}{4}\int_{\{t=a\}}\s^{-2\A}\langle  Ax, \n G\rangle w^2\left(\frac{t \s'}{\s}\right)^{-\frac12}.
\end{align*}
After using $\D(Ax) =n +O(|x|)$ and $\n G= -x/2t G$, we find
\begin{align}\label{i6b2}
	&-\frac{1}{2}\int_{\{t=a\}}\s^{-2\A}\langle w Ax, \n w\rangle G\left(\frac{t \s'}{\s}\right)^{-\frac12}\\
	&=\frac{n}{4}\int_{\{t=a\}}\s^{-2\A}w^2 G\left(\frac{t \s'}{\s}\right)^{-\frac12} +O(1)\int_{\{t=a\}}|x|\s^{-2\A}w^2 G\left(\frac{t \s'}{\s}\right)^{-\frac12}-\frac{1}{4}\int_{\{t=a\}}\s^{-2\A}\langle  Ax,\frac{x}{2t} \rangle w^2 G\left(\frac{t \s'}{\s}\right)^{-\frac12}.\notag
\end{align}
Hence using \eqref{i6b2} in \eqref{i6b}, the boundary term in $I_6$ becomes
\begin{align}\label{bgw}
		\int_{\{t=a\}} \langle A\n \w, \n \w\rangle t^{-\mu}\left(\frac{t \s'}{\s}\right)^{-1/2} &= \int_{\{t=a\}}t\s^{-2\A}\langle A \n w, \n w\rangle G\left(\frac{t \s'}{\s}\right)^{-\frac12}
		+\frac{n}{4}\int_{\{t=a\}}\s^{-2\A}w^2 G\left(\frac{t \s'}{\s}\right)^{-\frac12}\\ &+O(1)\int_{\{t=a\}}|x|\s^{-2\A}w^2 G\left(\frac{t \s'}{\s}\right)^{-\frac12}-\frac{1}{4}\int_{\{t=a\}}\s^{-2\A}\langle  Ax,\frac{x}{2t} \rangle w^2 G\left(\frac{t \s'}{\s}\right)^{-\frac12}\notag\\
		&+\frac{1}{16}\int_{\{t=a\}}\s^{-2\A}\frac{w^2}{t}\langle Ax,x \rangle G\left(\frac{t \s'}{\s}\right)^{-\frac12}.\notag
\end{align}
 Using $|x|<4$ and also by writing $A=A-\mathbb I_n +\mathbb I_n$  we get
\begin{align}\label{6bf}
	\int_{\{t=a\}} \langle A\n \w, \n \w\rangle t^{-\mu}\left(\frac{t \s'}{\s}\right)^{-1/2}
	& \ge \int_{\{t=a\}}t\s^{-2\A}\langle A \n w, \n w\rangle G\left(\frac{t \s'}{\s}\right)^{-\frac12}
-N\int_{\{t=a\}}\s^{-2\A}w^2 G\left(\frac{t \s'}{\s}\right)^{-\frac12}\\
	&-\frac{1}{16}\int_{\{t=a\}}\s^{-2\A}\langle  (A-\mathbb I_n)x,x \rangle \frac{w^2}{t
	} G\left(\frac{t \s'}{\s}\right)^{-\frac12}-\frac{1}{16}\int_{\{t=a\}}\s^{-2\A}\frac{|x|^2}{t} \w^2
 G\left(\frac{t \s'}{\s}\right)^{-\frac12}.\notag
\end{align}
 Thus using \eqref{6bf} in \eqref{it} we find
\begin{align}\label{i6ff1}
		I_6& \ge  \frac{1}{2}\int  \langle A\n \w,\n \w \rangle  t^{-\mu}\frac{\theta (\lambda t)}{t}\left(\frac{t \s'}{\s}\right)^{-\frac12}
		-NN^{2\A}\lambda^{2\A+N}\int t|\n w|^2- N \delta \int \s^{1-2\A}\frac{\theta(\lambda t)}{t}|\n w|^2 G\\
		&-\A NN^{2\A}\lambda^{2\A+N}\int w^2- \A N \delta \int \s^{-2\A}\frac{\theta(\lambda t)}{t}w^2 G +\int_{\{t=a\}}t\s^{-2\A}\langle A \n w, \n w\rangle G\left(\frac{t \s'}{\s}\right)^{-\frac12}\notag\\
		&-N\int_{\{t=a\}}\s^{-2\A}w^2 G\left(\frac{t \s'}{\s}\right)^{-\frac12}
		-\frac{1}{16}\left[\int_{\{t=a\}}\s^{-2\A}\langle  (A-\mathbb I_n)x,x \rangle \frac{w^2}{t
		} G\left(\frac{t \s'}{\s}\right)^{-\frac12}+\int_{\{t=a\}}\s^{-2\A}\frac{|x|^2}{t} w^2
		G\left(\frac{t \s'}{\s}\right)^{-\frac12}\right].\notag
\end{align}
By similar  computations as in  \eqref{bgw}  we obtain
\begin{align}
		\int \langle A\n \w, \n \w\rangle t^{-\mu}\left(\frac{t \s'}{\s}\right)^{-1/2}\frac{\theta(\lambda t)}{t} &= \int t\s^{-2\A}\langle A \n w, \n w\rangle G\left(\frac{t \s'}{\s}\right)^{-\frac12}\frac{\theta(\lambda t)}{t}
	+\frac{n}{4}\int\s^{-2\A}w^2 G\left(\frac{t \s'}{\s}\right)^{-\frac12}\frac{\theta(\lambda t)}{t}\\ &+O(1)\int|x|\s^{-2\A}w^2 G\left(\frac{t \s'}{\s}\right)^{-\frac12}\frac{\theta(\lambda t)}{t}\notag\\
	&- \frac{1}{16}\int\s^{-2\A}\frac{w^2}{t}\langle Ax,x \rangle G\left(\frac{t \s'}{\s}\right)^{-\frac12}\frac{\theta(\lambda t)}{t}.\notag
\end{align}
Now writing $A=A-\mathbb I_n+\mathbb I_n$ and using $|A-\mathbb I_n| = O(|x|)$ we get
\begin{align}\label{abc}
	\int \langle A\n \w, \n \w\rangle t^{-\mu}\left(\frac{t \s'}{\s}\right)^{-1/2}\frac{\theta(\lambda t)}{t} &\ge \int t\s^{-2\A}\langle A \n w, \n w\rangle G\left(\frac{t \s'}{\s}\right)^{-\frac12}\frac{\theta(\lambda t)}{t}
	+\frac{n}{4}\int\s^{-2\A}w^2 G\left(\frac{t \s'}{\s}\right)^{-\frac12}\frac{\theta(\lambda t)}{t}\\ &+O(1)\int|x|\s^{-2\A}w^2 G\left(\frac{t \s'}{\s}\right)^{-\frac12}\frac{\theta(\lambda t)}{t}-\frac{1}{16}\int\s^{-2\A}\frac{|x|^2}{t} G\left(\frac{t \s'}{\s}\right)^{-\frac12}\frac{\theta(\lambda t)}{t}\notag\\
	&- N\int\s^{-2\A}\frac{|x|^3}{t} G\left(\frac{t \s'}{\s}\right)^{-\frac12}\frac{\theta(\lambda t)}{t}.\notag
\end{align}
Now in the third term and the last term of \eqref{abc} above, we use $\theta$ is bounded as $\lambda t \le 1$ and Lemma \ref{logi} to obtain
\begin{align}
	\int \langle A\n \w, \n \w\rangle t^{-\mu}\left(\frac{t \s'}{\s}\right)^{-1/2}\frac{\theta(\lambda t)}{t} &\geq \int t\s^{-2\A}|\n w|^2 G\left(\frac{t \s'}{\s}\right)^{-\frac12}\frac{\theta(\lambda t)}{t}
	+\frac{n}{4}\int\s^{-2\A}w^2 G\left(\frac{t \s'}{\s}\right)^{-\frac12}\frac{\theta(\lambda t)}{t}\\ &- \A NN^{2\A}\lambda^{2\A+N}\int w^2-  N \A \delta \int \s^{-2\A}\frac{\theta(\lambda t)}{t}w^2 G\notag\\
	&-\frac{1}{16}\int\s^{-2\A}\frac{|x|^2}{t} G\left(\frac{t \s'}{\s}\right)^{-\frac12}\frac{\theta(\lambda t)}{t}.\notag
\end{align}
We thus have
\begin{align}\label{i6ff}
		I_6& \ge  \frac{1}{N}\int \s^{1-2\A}  |\n w|^2  G \frac{\theta (\lambda t)}{t}\left(\frac{t \s'}{\s}\right)^{-\frac12}-\frac{1}{32}\int\s^{-2\A}\frac{|x|^2}{t} G\left(\frac{t \s'}{\s}\right)^{-\frac12}\frac{\theta(\lambda t)}{t}+\frac{n}{8}\int\s^{-2\A}w^2 G\left(\frac{t \s'}{\s}\right)^{-\frac12}\frac{\theta(\lambda t)}{t}\\
	&-NN^{2\A}\lambda^{2\A+N}\int t|\n w|^2- N \delta \int \s^{1-2\A}\frac{\theta(\lambda t)}{t}|\n w|^2 G \notag\\
	&-\A NN^{2\A}\lambda^{2\A+N}\int w^2- \A N \delta \int \s^{-2\A}\frac{\theta(\lambda t)}{t}w^2 G +\int_{\{t=a\}}t\s^{-2\A}\langle A \n w, \n w\rangle G\left(\frac{t \s'}{\s}\right)^{-\frac12}\notag\\
	&-N\int_{\{t=a\}}\s^{-2\A}w^2 G\left(\frac{t \s'}{\s}\right)^{-\frac12}
	-\frac{1}{16}\left[\int_{\{t=a\}}\s^{-2\A}\langle  (A-\mathbb I_n)x,x \rangle \frac{w^2}{t
	} G\left(\frac{t \s'}{\s}\right)^{-\frac12}+\int_{\{t=a\}}\s^{-2\A}\frac{|x|^2}{t} w^2
	G\left(\frac{t \s'}{\s}\right)^{-\frac12}\right].\notag
\end{align}
We now focus our attention on $I_7.$ Again using $2 \w \Z \w = \Z \w^2,$ we get
\begin{align*}
	I_7&= \int V \frac{\Z \w^2}{2t}t^{-\mu}\left(\frac{t \s'}{\s}\right)^{-1/2}=\int V (\partial_t \w^2 + \langle A \n \w^2, \frac{x}{2t}\rangle)t^{-\mu}\left(\frac{t \s'}{\s}\right)^{-1/2}.
\end{align*}
After using integratation by parts formula we obtain 
\begin{align*}
	I_7&= -\int V \w^2 \left(t^{-\mu}\left(\frac{t \s'}{\s}\right)^{-1/2}\right)'-\int_{\{t=a\}}  V \w^2 t^{-\mu}\left(\frac{t \s'}{\s}\right)^{-1/2}
	-\frac{1}{2}\int \D(V Ax)\w^2t^{-\mu-1}\left(\frac{t \s'}{\s}\right)^{-1/2}.
\end{align*}
It is easy to see that after some simplification and using \eqref{ode} we get
\begin{align}
	I_7&= \mu\int V \w^2 t^{-\mu-1}\left(\frac{t \s'}{\s}\right)^{-1/2}-\frac{1}{2}\int V \w^2 t^{-\mu}\left(\frac{t \s'}{\s}\right)^{-1/2}\frac{\theta (\lambda t)}{t}
	-\int_{\{t=a\}}  V \w^2 t^{-\mu}\left(\frac{t \s'}{\s}\right)^{-1/2}\\
	&-\frac{1}{2}\int \langle \n V, Ax\rangle \w^2 t^{-\mu-1}\left(\frac{t \s'}{\s}\right)^{-1/2} -\frac{1}{2}\int   V \D(Ax) \w^2 t^{-\mu-1}\left(\frac{t \s'}{\s}\right)^{-1/2}.\notag
\end{align}
We now use $\D(Ax)=n +O(|x|)$ and rearrange terms to obtain
\begin{align}
	I_7&= \left(\mu-\frac{n}{2}\right)\int V \w^2 t^{-\mu-1}\left(\frac{t \s'}{\s}\right)^{-1/2}-\frac{1}{2}\int V \w^2 t^{-\mu}\left(\frac{t \s'}{\s}\right)^{-1/2}\frac{\theta (\lambda t)}{t}
	-\int_{\{t=a\}}  V \w^2 t^{-\mu}\left(\frac{t \s'}{\s}\right)^{-1/2}\\
	&-\frac{1}{2}\int \langle \n V, Ax\rangle \w^2 t^{-\mu-1}\left(\frac{t \s'}{\s}\right)^{-1/2} +O(1)\int |x| V  \w^2 t^{-\mu-1}\left(\frac{t \s'}{\s}\right)^{-1/2}.\notag
\end{align}
Since $\lambda t \le 1,$ using the boundedness of  $\theta$ and also that $Ax=O( |x|)$, we deduce the following inequality
\begin{align}
	I_7&\ge \left(\mu-\frac{n}{2}\right)\int |V| \w^2 t^{-\mu-1}\left(\frac{t \s'}{\s}\right)^{-1/2}-\frac{N}{2}\int |V| \w^2 t^{-\mu-1}\left(\frac{t \s'}{\s}\right)^{-1/2}
	-\int_{\{t=a\}} t V \w^2 t^{-\mu-1}\left(\frac{t \s'}{\s}\right)^{-1/2}\\
	&-\frac{N}{2}\int | \n V||x| \w^2 t^{-\mu-1}\left(\frac{t \s'}{\s}\right)^{-1/2} -N\int |x| |V | \w^2 t^{-\mu-1}\left(\frac{t \s'}{\s}\right)^{-1/2}.\notag
\end{align}
 Now using $|x| \leq B_4$ and by writing $\w$ in terms of $w$ we get 
\begin{align}\label{i7ff}
	I_7 \ge -N(||V(\cdot, 0)||_{\infty}+ ||\n V(\cdot, 0)||_{\infty}) \int \s^{-2\A}w^2 G \left(\frac{t \s'}{\s}\right)^{-1/2} -\int_{\{t=a\}}  t \s^{-2\A}V w^2 G \left(\frac{t \s'}{\s}\right)^{-1/2}.
\end{align}
We now substitute the bounds for $I_2$, $I_3$,...,$I_7$  as in  \eqref{i2ff}, \eqref{i3ff}, \eqref{i4ff}, \eqref{i5ff}, \eqref{i6ff}, \eqref{i7ff}  respectively in \eqref{amgm} to obtain
\begin{align}
		&\int \s^{-2\A}e^{\frac{-|x|^2}{4t}}\left( w_t + \D(A \n w) + V w\right)^2t^{-\mu}\left(\frac{t \s'}{\s}\right)^{-1/2}\\
	&\ge \int \left(\frac{\Z\w}{2t}\right)^2t^{-\mu}\left(\frac{t \s'}{\s}\right)^{-1/2} -\frac{1}{4}\int \left(\frac{\Z \w}{2t}\right)^2 t^{-\mu}\left(\frac{t \s'}{\s}\right)^{-1/2}\notag\\
	&+\left(\frac{\A}{2}-\frac{n}{8}+\frac{n}{8N}\right)\int \s^{-2\A}\left(\frac{t \s'}{\s}\right)^{1/2}\frac{\theta (\lambda t) }{t}w^2 G+	\left(\frac{1}{16}-\frac{1}{32}-\frac{1}{32}\right)\int \s^{-2\A}\frac{|x|^2}{t} \left(\frac{t \s'}{\s}\right)^{-\frac{1}{2}}\frac{\theta(\lambda t)}{t}w^2 G\notag\\
	&-N||V(\cdot, 0)||_1\int \s^{-2\A}w^2G\left(\frac{t \s'}{\s}\right)^{-\frac{1}{2}}-N(1+\A) N^{2\A}\lambda^{2\A+N} \int w^2 -N(1+\A)\delta \int \s^{-2\A}\frac{\theta(\lambda t)}{t}w^2G\notag\\
	&+ \frac{1}{N}\int \s^{1-2\A}|\n w|^2 G\frac{\theta (\lambda t)}{t}
	-NN^{2\A}\lambda^{2\A+N}\int t|\n w|^2- N \delta \int \s^{1-2\A}\frac{\theta(\lambda t)}{t}|\n w|^2 G\notag\\
&+\left(\frac{1}{8}-\frac{1}{16}-\frac{1}{16}\right)\int_{\{t=a\}}\s^{-2\A}\frac{|x|^2}{t} w^2
G\left(\frac{t \s'}{\s}\right)^{-\frac12}-2\int_{\{t=a\}} \s^{-2\A}\frac{\langle (A-\mathbb I_n)x, x \rangle}{16t}w^2G \left(\frac{t \s'}{\s}\right)^{-1/2}\notag\\
	& +\int_{\{t=a\}}t\s^{-2\A}\langle A \n w, \n w\rangle G\left(\frac{t \s'}{\s}\right)^{-\frac12}-(\A+n/4+N)\int_{\{t=a\}}\s^{-2\A}w^2 G\left(\frac{t \s'}{\s}\right)^{-\frac12}\notag\\
	&-||V(\cdot, 0)||_1\int_{\{t=a\}}t\s^{-2\A}w^2 G\left(\frac{t \s'}{\s}\right)^{-\frac12}.\notag
\end{align}
 We use $1/N\le \left(\frac{t \s'}{\s}\right)^{-\frac12}\le N,$ $\langle A \n w, \n w\rangle \sim  |\n w|^2$ and subsequently we obtain
\begin{align}\label{s11f}
&N\int \s^{-2\A}e^{\frac{-|x|^2}{4t}}\left( w_t + \D(A \n w) + V w\right)^2t^{-\mu}\\
&\ge \frac{3}{4}\int \left(\frac{\Z\w}{2t}\right)^2t^{-\mu}\left(\frac{t \s'}{\s}\right)^{-1/2}+\left(\frac{\A}{2N}-\frac{nN}{8}+\frac{n}{8N^2}-N(1+\A)\delta\right)\int \s^{-2\A}\frac{\theta (\lambda t) }{t}w^2 G-N^2||V(\cdot, 0)||_1\int \s^{-2\A}w^2 G\notag\\
&-N(1+\A) N^{2\A}\lambda^{2\A+N} \int w^2
+ \left(\frac{1}{N}-N\delta\right)\int \s^{1-2\A}|\n w|^2 G\frac{\theta (\lambda t)}{t}	-NN^{2\A}\lambda^{2\A+N}\int t|\n w|^2\notag\\
&-2\int_{\{t=a\}} \s^{-2\A}\frac{\langle (A-\mathbb I_n)x, x \rangle}{16t}w^2G  +\frac{1}{N}\int_{\{t=a\}}t\s^{-2\A}|\n w|^2 G\notag\\
&-N(\A+n/4+N-a||V(\cdot, 0)||_1)\int_{\{t=a\}}\s^{-2\A}w^2 G.\notag
\end{align}
We now estimate $2\int_{\{t=a\}} \s^{-2\A}\frac{\langle (A-\mathbb I_n)x, x \rangle}{16t}w^2G .$ To do this,  we first observe that since $\sigma(t) \sim t$,  therefore for $|x|>\delta,$ it is easy to see that $$\frac{|x|^3}{a}G(x,a)\s^{-2\A}(a) \le N^{2\A}\lambda^{2\A+N}.$$
Using this along with the fact that $|A(x, 0) - \mathbb I_n| = O(|x|)$, we find \begin{align}
	&2\left|\int_{\{t=a\}} \s^{-2\A}\frac{\langle (A-\mathbb I_n)x, x \rangle}{16t}w^2G\right| \le N \int \s^{-2\A}(a)\frac{|x|^3}{a}w^2(x,a)G(x,a) dx\\
	&= N \int_{B_{\delta}} \s^{-2\A}(a)\frac{|x|^3}{a}w^2(x,a)G(x,a) dx+ N \int_{\{|x|>\delta\}} \s^{-2\A}(a)\frac{|x|^3}{a}w^2(x,a)G(x,a) dx\notag\\
	&\le N\delta \int \s^{-2\A}(a)\frac{|x|^2}{a}w^2(x,a)G(x,a) dx+ N^{2\A}\lambda^{2\A+N} \int_{\{|x|>\delta\}} w^2(x,a) dx\notag\\
	&\le 2N^2\delta \int a\s^{-2\A}(a)|\n w(x,a)|^2G(x,a)+N^2\delta \int \s^{-2\A}(a)w^2(x,a)G(x,a) dx  + N^{2\A+1}\lambda^{2\A+N} \int w^2(x,a) dx,\notag
\end{align}
where in the last inequality we have used Lemma \ref{logb}. Hence \eqref{s11f} becomes
\begin{align}
	&N\int \s^{-2\A}e^{\frac{-|x|^2}{4t}}\left( w_t + \D(A \n w) + V w\right)^2t^{-\mu}\\
	&\ge \frac{3}{4}\int \left(\frac{\Z\w}{2t}\right)^2t^{-\mu}\left(\frac{t \s'}{\s}\right)^{-1/2}+\left(\frac{\A}{2N}-\frac{nN}{8}+\frac{n}{8N^2}-N(1+\A)\delta\right)\int \s^{-2\A}\frac{\theta (\lambda t) }{t}w^2 G-N^2||V(\cdot, 0)||_1\int \s^{-2\A}w^2 G\notag\\
	&-N(1+\A) N^{2\A}\lambda^{2\A+N} \underset{t\ge a}{\text{sup}}\int w^2(x,t)dx
	+ \left(\frac{1}{N}-N\delta\right)\int \s^{1-2\A}|\n w|^2 G\frac{\theta (\lambda t)}{t}	-NN^{2\A}\lambda^{2\A+N}\int t|\n w|^2\notag\\
	& +\left(\frac{1}{N}-N^2\delta\right)\int_{\{t=a\}}t\s^{-2\A}|\n w|^2 G -N(\A+n/4+N-a||V(\cdot, 0)||_1-N\delta)\int_{\{t=a\}}\s^{-2\A}w^2 G.\notag
\end{align}
Now we choose $\delta>0$ small enough such that $$\frac{\A}{2N}-\frac{nN}{8}+\frac{n}{8N^2}-N(1+\A)\delta \ge \frac{\A}{4N},$$ $$\frac{1}{N}-N\delta \ge \frac{1}{2N}$$ and $$\frac{1}{N}-N^2\delta \ge \frac{1}{2N}.$$  At this point,  we make the preliminary  yet crucial observation that  for $t \leq \frac{1}{2\lambda}$\begin{equation}\label{cr1}\frac{\theta (\lambda t) }{t} \geq \lambda^{1/2}t^{-1/2} (\log 2)^{3/2}  \sim \lambda.\end{equation} Hence we obtain
\begin{align}
	&\int \s^{-2\A}e^{\frac{-|x|^2}{4t}}\left( w_t + \D(A \n w) + V w\right)^2t^{-\mu}\left(\frac{t \s'}{\s}\right)^{-1/2}\ge \frac{\A\lambda}{4N}\int \s^{-2\A}w^2 G-N^2||V(\cdot, 0)||_1\int \s^{-2\A}w^2 G\notag\\
	&-N(1+\A) N^{2\A}\lambda^{2\A+N} \underset{t\ge a}{\text{sup}} \int w^2
	+ \frac{\lambda}{2N}\int \s^{1-2\A}|\n w|^2\frac{\theta(\lambda t)}{t} G	-NN^{2\A}\lambda^{2\A+N}\int t|\n w|^2\notag\\
	& +\frac{1}{2N}\int_{\{t=a\}}t\s^{-2\A}|\n w|^2 G -N^{2\A}\lambda^{2\A+N} \int_{\{t=a\}} w^2 -N(\A+n/4+N-a||V(\cdot, 0)||_1-N\delta)\int_{\{t=a\}}\s^{-2\A}w^2 G.\notag
\end{align}
If we now choose $\A^2 \ge 8N^3 \delta^2||V(\cdot, 0)||_1$ then $$\frac{\A\lambda}{4N}\int \s^{-2\A}w^2 G-N^2||V(\cdot, 0)||_1\int \s^{-2\A}w^2 G \ge \frac{\A\lambda}{8N}\int \s^{-2\A}w^2 G.$$
Moreover since  $a\A<1$ we find $$a||V(\cdot, 0)||_1 \le \frac{a\A^2}{ 8N^3 \delta^2}\le \frac{\A}{ 8N^3 \delta^2}.$$
 Consequently  for sufficiently large $N$, $\A\ge N||V(\cdot, 0)||^{1/2}_1+N$ and $0<a<1/\lambda$, we get
\begin{align}\label{f00}
	&N\int \s^{-2\A}e^{\frac{-|x|^2}{4t}}\left( w_t + \D(A \n w) + V w\right)^2t^{-\mu}\\
	&\ge \frac{\A^2}{N}\int \s^{-2\A}w^2 
	-N^{N\A}\lambda^{2\A}\underset{t \ge a}{\text{sup}} \int w^2(x,t)dx
	+ \frac{1}{N}\int \s^{1-2\A}|\n w|^2\frac{\theta(\lambda t)}{t} G	-N^{2\A}\A^{2\A}\int t|\n w|^2\notag\\
	& +\frac{1}{N}\int_{\{t=a\}}t\s^{-2\A}|\n w|^2 G -N^{2\A}\lambda^{2\A+N} \int_{\{t=a\}} w^2 -N\A\int_{\{t=a\}}\s^{-2\A}w^2 G.\notag
\end{align}
The estimate \eqref{step1} thus follows.

\textbf{Step 2}: We now show that there exists a universal constant $N$ and $\delta$ such that for all $\A \ge N(1+||V(\cdot, 0)||_1^{1/2})$ the following inequality holds
\begin{align}\label{cari}
	&	\A^2 \int \s_a^{-2\A}w^2G_a +\A \int \s_a^{1-2\A}|\n w|^2G_a\\
	& \le N \int \s_a^{1-2\A}(\D(A(x,t) \n w)+w_t+V(x,0)w)^2 G_a +N^{2\A}\A^{2\A}\underset{t \ge 0}{\operatorname{sup}}\int w^2
	+t|\n w|^2 dx\notag\\
	&+\s(a)^{-2\A}\left(-\frac{a}{N}\int |\n w (x,0)|^2G(x,a)dx + N \A \int w^2(x,0)G(x,a)dx\right).\notag
\end{align}
for all $0<a\le \frac{1}{4\lambda}$ and $w \in C_{0}^{\infty}(B_4 \times [0,\frac{1}{2\lambda}))$, where $\lambda=\A/\delta^2.$\\
Note that the difference from Step 1 is that we replace $A(x,0)$ by $A(x,t).$ This essentially follows from \eqref{step1} using similar ideas in \cite{EF}. We nevertheless provide all the details for the sake of completeness in order to track the precise dependence of the parameter $\alpha$.

\medskip 

\textbf{Proof of Step 2}:
  We write $$\D(A(x,0) \n w)+w_t+Vw=\D((A(x,0)-A(x,t))\n w)+\D(A(x,t) \n w)+w_t+Vw.$$ This gives
\begin{align}\label{br}
	N \int \s^{1-2\A}(\D(A(x,0) \n w)+w_t+Vw)^2G_a &\le 2	N \int \s_a^{1-2\A}(\D((A(x,0)-A(x,t)) \n w)^2 G_a\\
	& +2 	N \int \s_a^{1-2\A}(\D(A(x,t) \n w)+w_t+Vw)^2G_a.\notag
\end{align}
This suggests we have to estimate
\begin{align}
	N \int \s_a^{1-2\A}(\D((A(x,0)-A(x,t)) \n w)^2 G_a.
\end{align}
To simplify this term note that  $$\D((A(x,0)-A(x,t)) \n w)=\sum \partial_j(a_{ij}(x,0)-a_{ij}(x,t))w_i+(a_{ij}(x,0)-a_{ij}(x,t))w_{ij}.$$
Also, we have $|a_{ij}(x,0)-a_{ij}(x,t)| \le M \sqrt{t}\le M \sqrt{t+a}\le MN\s_a(t).$ Therefore we have
\begin{align}\label{dis}
	N \int \s_a^{1-2\A}(\D((A(x,0)-A(x,t)) \n w)^2 G_a \le 	NM^2 \int \s_a^{1-2\A} |\n w|^2 G_a + N^2 M^2 \int \s_a^{2-2\A}|D^2w|^2G_a.
\end{align}

Thus we will be done if we could estimate $\int \s_a^{2-2\A}|D^2w|^2G_a.$ We now proceed to estimate  $\int \s_a^{2-2\A}|D^2w|^2G_a.$  In the ensuing computations below, we let $A(\cdot)=A(\cdot,0)$ and $V=V(\cdot, 0)$.
First we calculate for $k=1,2,...,n$   
\begin{align}\label{ca}
	\partial_t w_k^2 + \D(A \n (w_k^2))&=2w_k \partial_t w_k + 2 \D(A w_k \n w_k)\\
	&=2w_k \partial_t w_k + 2 \D(A \n w_k)w_k + 2 \langle A \n w_k, \n w_k\rangle.\notag
\end{align}
We now define $H(w):=w_t+ \D(A(x,0) \n w)$. We let $\partial_k A$ be the matrix with entries $(\partial_k A)_{ij}=\partial_{k}a_{ij}.$ Now we calculate
 \begin{align*}
&\partial_k(w_k(\partial_t w + \D(A \n w)+Vw))\\
&= w_{kk}(H(w)+Vw) + w_k \partial_t w_k + w_k \D((\partial_k A)\n w) + w_k \D( A \n w_k)+w_kV_kw+Vw_k^2\\
&= w_{kk}(H(w)+Vw) + w_k \partial_t w_k  +\D(w_k(\partial_k A)\n w) -\langle \n w_k,(\partial_k A)\n w \rangle  + w_k \D( A \n w_k)+w_k V_kw + Vw_k^2,
\end{align*}
where we have used $ w_k \D((\partial_k A)\n w)=\D(w_k(\partial_k A)\n w) -\langle \n w_k,(\partial_k A)\n w \rangle.$ We then rewrite it as 
\begin{align}\label{cal}
	w_k \D( A \n w_k)&=\partial_k(w_k(H(w)+Vw))-w_{kk}(H(w)+Vw) - w_k \partial_t w_k  -\D(w_k(\partial_k A)\n w)\\ &+\langle \n w_k,(\partial_k A)\n w \rangle
	 -w_k V_kw - Vw_k^2\notag.
\end{align}
We now use \eqref{cal} in \eqref{ca} to get
\begin{align*}
	\partial_t w_k^2 + \D(A \n (w_k^2)) &=2w_k \partial_t w_k + 2\partial_k(w_k( H(w)+Vw))-2w_{kk}(H(w)+Vw) -2 w_k \partial_t w_k  -2\D(w_k(\partial_k A)\n w)\\
	& +2\langle \n w_k,(\partial_k A)\n w \rangle + 2 \langle A \n w_k, \n w_k\rangle-2w_k V_kw -2V w_k^2\\
	&=2\partial_k(w_k (H(w)+Vw))-2w_{kk}(H(w)+Vw)  -2\D(w_k(\partial_k A)\n w) +2\langle \n w_k,(\partial_k A)\n w \rangle \\
	&\;\;\;+ 2 \langle A \n w_k, \n w_k\rangle-2w_k V_kw -2V w_k^2.
\end{align*}
We can now rewrite the above identity as 
\begin{align*}
	2 \langle A \n w_k, \n w_k\rangle &= H(w_k^2)- 2\partial_k(w_k (H(w)+Vw))+2w_{kk}(H(w)+Vw)  +2\D(w_k(\partial_k A)\n w)\\
	 &-2\langle \n w_k,(\partial_k A)\n w \rangle +2w_kV_k w  + 2V w_k^2.
\end{align*}
We now multiply by $\s^{2-2\A}G$ and use integration by parts formula to obtain 
\begin{align}\label{a11}
&2 \sum_k	\int \s^{2-2\A}\langle A \n w_k, \n w_k\rangle G  = \int \s^{2-2\A} |\n w|^2 H^*(G) +(2\A-2)\int \s^{1-2\A}\s'|\n w|^2 G- \int_{\{t=a\}} \s^{2-2\A}|\n w|^2 G \\
&+2 \int \s^{2-2\A} \langle \n w, \n G\rangle (H(w)+Vw)+2\sum_k\int \s^{2-2\A}  w_{kk}(H(w)+Vw)G -2\sum_k\int \s^{2-2\A} w_k\langle (\partial_k A)\n w, \n G \rangle\notag\\
 &-2\sum_k \int \s^{2-2\A} \langle \n w_k,(\partial_k A)\n w \rangle G +2\int \s^{2-2\A}\langle \n w, \n V\rangle w G +2\int \s^{2-2\A}  V |\n w|^2G,\notag
\end{align}
where 	$H^*G=\D(A(x,0) \n G)-G_t.$
 On using the Cauchy-Schwarz inequality and $ |\n G| \le \frac{|x|}{4t}G  \leq  \frac{4}{t}G$ on the  right-hand side of \eqref{a11}, we find
\begin{align}\label{a12}
	&2 \sum_k	\int \s^{2-2\A}\langle A \n w_k, \n w_k\rangle G  \le \int \s^{2-2\A} |\n w|^2 H^*(G) +(2\A-2)\int \s^{1-2\A}\s'|\n w|^2 G- \int_{\{t=a\}} \s^{2-2\A}|\n w|^2 G \\
	&+ \int \s^{2-2\A} |\n w|^2 \frac{|x|^2}{16t^2}G+\int \s^{2-2\A} (H(w)+Vw)^2G
	+\frac{1}{4N}\sum_k\int \s^{2-2\A}  w_{kk}^2G +4nN\int \s^{2-2\A} (H(w)+Vw)^2G\notag\\
	&+N\int \s^{1-2\A} |\n w|^2G+\frac{1}{4N}\sum_k \int \s^{2-2\A}  |\n w_k|^2G+4N^2\int \s^{2-2\A} |\n w|^2G\notag\\
	&+\int \s^{2-2\A} |\n w|^2 G +\int \s^{2-2\A} |\n V|^2 w^2G+2\int \s^{2-2\A}  |V||\n w|^2G.\notag
\end{align}
We now use  $$\sum_k \langle A \n w_k, \n w_k\rangle \ge \frac{1}{N} |D^2w|^2,$$ $\s<1$, $\s'= O(1)$ and combine the like terms to get
\begin{align}\label{a121}
	&\frac{6}{4N}	\int \s^{2-2\A}|D^2 w|^2 G  \le \int \s^{2-2\A} |\n w|^2 H^*(G) +(2\A-1+N+4N^2)\int \s^{1-2\A}|\n w|^2 G- \int_{\{t=a\}} \s^{2-2\A}|\n w|^2 G \\
	&+ \int \s^{2-2\A} |\n w|^2 \frac{|x|^2}{16t^2}G+(1+4nN)\int \s^{2-2\A} (H(w)+Vw)^2G+\int \s^{2-2\A} |\n V|^2 w^2G+\int \s^{2-2\A}  |V||\n w|^2G.\notag
\end{align}
We take $\A$ large enough such that $\A>4N^2+N.$ Since we have $t<1/\lambda,$ $\s <t$ and $\lambda=\A/\delta^2$, therefore we get $\s \le \delta^2/\A.$ Consequently \eqref{a121} becomes 
\begin{align}\label{a22}
	&\frac{6}{4N}	\int \s^{2-2\A}|D^2 w|^2 G  \le \int \s^{2-2\A} |\n w|^2 H^*(G) +3\A\int \s^{1-2\A}|\n w|^2 G- \int_{\{t=a\}} \s^{2-2\A}|\n w|^2 G + \int \s^{2-2\A} |\n w|^2 \frac{|x|^2}{16t^2}G\\
	&+\frac{(1+4nN)\delta^2}{\A}\int \s^{1-2\A} (H(w)+Vw)^2G
	+\frac{\delta^4}{\A^2}\int \s^{-2\A}|\n V|^2w^2G+\frac{\delta^2}{\A}\int \s^{1-2\A}| V||\n w|^2G.\notag
\end{align}
We will now  take $\A$ larger if necessary  such that $\A \ge N ||V(\cdot, 0)||_1^{1/2}.$ Consequently $ ||\n V||_{\infty}^2 \le \A^4/N^4$ and $ ||V||_{\infty} \le \A^2/N^2.$ Hence \eqref{a22} becomes
\begin{align}\label{a23}
		&\frac{6}{4N}	\int \s^{2-2\A}|D^2 w|^2 G  \le \int \s^{2-2\A} |\n w|^2 H^*(G) +3\A\int \s^{1-2\A}|\n w|^2 G- \int_{\{t=a\}} \s^{2-2\A}|\n w|^2 G \\
	& +\int \s^{2-2\A} |\n w|^2 \frac{|x|^2}{t^2}G+\frac{(1+4nN)\delta^2}{\A}\int \s^{1-2\A} (H(w)+Vw)^2G
	 +\frac{\A^2\delta^4}{N^4}\int \s^{-2\A}w^2G +\frac{\A \delta^2}{N^2}\int \s^{1-2\A}|\n w|^2G .\notag
\end{align}
Next we note that \begin{align*}
	H^*G=\D(A \n G)-G_t&=\sum a_{ij}G_{ij}+ \sum \partial_j a_{ij} G_i -G_t\\
	&=\sum (a_{ij}- \delta_{ij}) G_{ij}+ \sum \partial_j a_{ij} G_i +\sum \delta_{ij}G_{ij} -G_t\\
	&=\sum (a_{ij}- \delta_{ij}) G_{ij}+ \sum \partial_j (a_{ij}) G_i,
\end{align*}
where we have used $G$ is a fundamental solution of heat eqaution. We now use $A$ is Lipschitz to get \begin{align*}
|	H^*G| &\le \sum |(a_{ij}- \delta_{ij})| |G_{ij}|+ \sum |\partial_j (a_{ij})| |G_i|\\
&\le N\left( \frac{|x|}{t}+ \frac{|x|^3}{t^2}\right).
\end{align*}
Hence using $\s\le t$ and $x \in B_4$ we have 
\begin{align}\label{a24}
	\int \s^{2-2\A} |\n w|^2 H^*(G)+ \int \s^{2-2\A} |\n w|^2 \frac{|x|^2}{t^2}G &\le N \int t \s^{1-2\A} |\n w|^2\left( \frac{|x|}{t}+ \frac{|x|^3}{t^2}+\frac{|x|^2}{t^2}\right)G\\
	&\le N \int  \s^{1-2\A} |\n w|^2\left( 1+ \frac{|x|^2}{t}\right)G\notag\\
	&\le \delta NN^{2\A}\lambda^{2\A+N}\int t|\n w|^2+ N \delta^2 \int \s^{1-2\A}\frac{\theta(\lambda t)}{t}|\n w|^2 G\notag\\
	& +N \int \s^{1-2\A}|\n w|^2 G,\notag
\end{align}
 where the last inequality follows from Lemma \ref{logi}. We  now use \eqref{a24} in \eqref{a23} to find for all large $\alpha$ 
 \begin{align}\label{a25}
 	&\frac{6}{4N}	\int \s^{2-2\A}|D^2 w|^2 G  \le \delta NN^{2\A}\lambda^{2\A+N}\int t|\n w|^2+ N \delta^2 \int \s^{1-2\A}\frac{\theta(\lambda t)}{t}|\n w|^2 G +4\A\int \s^{1-2\A}|\n w|^2 G\\
 	&- \int_{\{t=a\}} \s^{2-2\A}|\n w|^2 G +\frac{(1+4nN)\delta^2}{\A}\int \s^{1-2\A} (H(w)+Vw)^2G
 	+\frac{\A^2\delta^4}{N^4}\int \s^{-2\A}w^2G +\frac{\A \delta^2}{N^2}\int \s^{1-2\A}|\n w|^2G .\notag
 \end{align} Now for large enough $\alpha$, it follows from \eqref{a25}  by using  $\A=\lambda \delta^2$ and $\lambda \le \frac{\theta(\lambda t)}{t}$ that the following holds
 \begin{align}\label{a25}
 	&\frac{6}{4N}	\int \s^{2-2\A}|D^2 w|^2 G  \le \delta NN^{2\A}\lambda^{2\A+N}\int t|\n w|^2+ (N+4+\delta^2/N^2)\delta^2 \int \s^{1-2\A}\frac{\theta(\lambda t)}{t}|\n w|^2 G \\
 	&- \int_{\{t=a\}} \s^{2-2\A}|\n w|^2 G +\frac{(1+4nN)\delta^2}{\A}\int \s^{1-2\A} (H(w)+Vw)^2G
 	+\frac{\A^2\delta^4}{N^4}\int \s^{-2\A}w^2G.\notag
 \end{align}
After taking $N$ large enough we get 
\begin{align}\label{a25}
	&	\int \s^{2-2\A}|D^2 w|^2 G  \le \delta NN^{2\A}\lambda^{2\A+N}\int t|\n w|^2+ N\delta^2 \int \s^{1-2\A}\frac{\theta(\lambda t)}{t}|\n w|^2 G \\
	&- \int_{\{t=a\}} \s^{2-2\A}|\n w|^2 G +N \delta^2\int \s^{1-2\A} (H(w)+Vw)^2G
	+\A^2\delta^4\int \s^{-2\A}w^2G.\notag
\end{align}
Now by replacing $w(\cdot, t)$  by   $w(\cdot,t-a)$   in \eqref{a25} and then by letting $t-a$ as our new $t$ we find
\begin{align}\label{d2f}
	\int \s_a^{2-2\A}|D^2w|^2 G_a &\le   N \delta^2 \int \s_a^{1-2\A}(\D(A(x,0) \n w)+w_t+Vw)^2 G_a +\delta N N^{2\A}\lambda^{2\A+N}\underset{t \ge 0}{\operatorname{sup}}\int t|\n w|^2 dx\\
	&+ N\delta^2 \int \s_a^{1-2\A}\frac{\theta(\lambda t)}{t}|\n w|^2 G_a
	+\A^2\delta^4\int \s_a^{-2\A}w^2G_a.\notag
\end{align}
Now we have the estimate for $	\int \s_a^{2-2\A}|D^2w|^2 G_a.$
Therefore from \eqref{dis} and \eqref{d2f} it follows
\begin{align}\label{br1}
		&N \int \s_a^{1-2\A}(\D((A(x,0)-A(x,t)) \n w)^2 G_a\\
		&\le 	NM^2 \int \s_a^{1-2\A} |\n w|^2 G_a
		 +N^3M^2 \delta^2 \int \s_a^{1-2\A}(\D(A(x,0) \n w)+w_t+Vw)^2 G_a\notag\\
		 & +\delta N^3M^2 N^{2\A}\lambda^{2\A+N}\underset{t \ge 0}{\operatorname{sup}}\int t|\n w|^2 dx+ N^3M^2\delta^2 \int \s_a^{1-2\A}\frac{\theta(\lambda t)}{t}|\n w|^2 G_a
		 +\A^2\delta^4N^2M^2\int \s_a^{-2\A}w^2G_a.\notag
\end{align}
We now plug the estimate \eqref{br1} in \eqref{br} to obtain
\begin{align}
	&	N \int \s_a^{1-2\A}(\D(A(x,0) \n w)+w_t+Vw)^2G_a\\
		 &\le 	2NM^2 \int \s_a^{1-2\A} |\n w|^2 G_a
		+2N^3M^2 \delta^2 \int \s_a^{1-2\A}(\D(A(x,0) \n w)+w_t+Vw)^2 G_a\notag\\
		& +2\delta N^3M^2 N^{2\A}\lambda^{2\A+N}\underset{t \ge 0}{\operatorname{sup}}\int t|\n w|^2 dx+ 2N^3M^2\delta^2 \int \s_a^{1-2\A}\frac{\theta(\lambda t)}{t}|\n w|^2 G_a
		+2\A^2\delta^4N^2M^2\int \s_a^{-2\A}w^2G_a\notag\\
		&+2 	N \int \s_a^{1-2\A}(\D(A(x,t) \n w)+w_t+Vw)^2G_a.\notag
\end{align}
We now choose $\delta$ small enough such that $N>4N^3M^2\delta^2$ to deduce the following inequality from above
\begin{align}\label{brf}
	&	\frac{N}{2} \int \s_a^{1-2\A}(\D(A(x,0) \n w)+w_t+Vw)^2G_a\\
	&\le 	2NM^2 \int \s_a^{1-2\A} |\n w|^2 G_a
+2\delta N^3M^2 N^{2\A}\lambda^{2\A+N}\underset{t \ge 0}{\operatorname{sup}}\int t|\n w|^2 dx+ 2N^3M^2\delta^2 \int \s_a^{1-2\A}\frac{\theta(\lambda t)}{t}|\n w|^2 G_a\notag\\
	&+2\A^2\delta^4N^2M^2\int \s_a^{-2\A}w^2G_a
	+2 	N \int \s_a^{1-2\A}(\D(A(x,t) \n w)+w_t+Vw)^2G_a.\notag
\end{align}
Using \eqref{brf} in \eqref{step1} ( more precisely in a  a "shifted in time" version of the estimate \eqref{step1} as in \eqref{d2f}) we obtain
\begin{align}
		&\A^2\int \s_a^{-2\A}w^2G_a+ \int \s_a^{1-2\A}|\n w|^2\frac{\theta(\lambda t)}{t} G_a\\
	&\le  	4NM^2 \int \s_a^{1-2\A} |\n w|^2 G_a
	+4\delta N^3M^2 N^{2\A}\lambda^{2\A+N}\underset{t \ge 0}{\operatorname{sup}}\int t|\n w|^2 dx+ 4N^3M^2\delta^2 \int \s_a^{1-2\A}\frac{\theta(\lambda t)}{t}|\n w|^2 G_a\notag\\
	&+4\A^2\delta^4N^2M^2\int \s_a^{-2\A}w^2G_a+4N\int \s_a^{1-2\A}\left(\D(A(x,t) \n w)+w_t + V w\right)^2G_a  
	+NN^{2\A}\lambda^{2\A+N}\underset{t\ge 0}{\operatorname{sup}}\int w^2+t|\n w|^2\notag\\
	& -\frac{1}{N}\int t\s_a^{-2\A}|\n w(x,0)|^2 G_adx  +N\A\int \s_a^{-2\A}w^2(x,0) G_adx.\notag
\end{align}
After rearranging the terms in the above inequality we find 
\begin{align}
	&(\A^2-4\A^2\delta^4N^2M^2)\int \s_a^{-2\A}w^2G_a+ (1-4N^3M^3\delta^2)\int \s_a^{1-2\A}|\n w|^2\frac{\theta(\lambda t)}{t} G_a-	4NM^2 \int \s_a^{1-2\A} |\n w|^2 G_a\\
	&\le  4\delta N^3M^2 N^{2\A}\lambda^{2\A+N}\underset{t \ge 0}{\operatorname{sup}}\int t|\n w|^2 dx
	+4N\int \s_a^{1-2\A}\left(\D(A(x,t) \n w)+w_t + V w\right)^2G_a\notag\\  
	&+NN^{2\A}\lambda^{2\A+N}\underset{t\ge 0}{\operatorname{sup}}\int w^2+t|\n w|^2
	-\frac{1}{N}\int t\s_a^{-2\A}|\n w(x,0)|^2 G_adx  +N\A\int \s_a^{-2\A}w^2(x,0) G_adx.\notag
\end{align}
We now  choose $\delta$  smaller if necessary  such that $1-4\delta^4N^2M^2>1/2$ and $1-4N^3M^3\delta^2>1/2$ holds. Consequently, we get for a new constant $N$
\begin{align}\label{f11}
&	\A^2 \int \s_a^{-2\A}w^2G_a + \int \s_a^{1-2\A}|\n w|^2 \frac{\theta(\lambda t)}{t}G_a-N \int \s_a^{1-2\A}|\n w|^2 G_a\\
&\le N \int \s_a^{1-2\A}(\D(A(x,t) \n w)+w_t+V(x,0)w)^2 G_a +NN^{2\A}\lambda^{2\A+N}\underset{t \ge 0}{\operatorname{sup}}\int w^2 +|\n w|^2 dx\notag\\
	&+\s(a)^{-2\A}\left(-\frac{a}{N}\int |\n w (x,0)|^2G(x,a)dx + N \A \int w^2(x,0)G(x,a)dx\right).\notag
\end{align}
Since $\frac{\theta(\lambda t)}{t} \le \lambda$ therefore by choosing $\A$ large enough such that $\A \ge 2N$ we get 
\begin{align}\label{f12}
	&	\A^2 \int \s_a^{-2\A}w^2G_a + \frac{\A}{2}\int \s_a^{1-2\A}|\n w|^2 G_a\\
	&\le N \int \s_a^{1-2\A}(\D(A(x,t) \n w)+w_t+V(x,0)w)^2 G_a +NN^{2\A}\lambda^{2\A+N}\underset{t \ge 0}{\operatorname{sup}}\int w^2 +|\n w|^2 dx\notag\\
	&+\s(a)^{-2\A}\left(-\frac{a}{N}\int |\n w (x,0)|^2G(x,a)dx + N \A \int w^2(x,0)G(x,a)dx\right).\notag
\end{align}
\eqref{cari} follows from \eqref{f12} in a standard way. 

\textbf{Step 3}: (Conclusion)  We finally show that exists a universal constant $N$ and $\delta$ such that for all $\A \ge N(1+||V||_1^{1/2}+[V]_{1/2}^{1/2})$ the following inequality holds
\begin{align}\label{cari1}
	&	\A^2 \int \s_a^{-2\A}w^2G_a +\A \int \s_a^{1-2\A}|\n w|^2G_a\\
	& \le N \int \s_a^{1-2\A}(\D(A(x,t) \n w)+w_t+V(x,t)w)^2 G_a +N^{2\A}\A^{2\A}\underset{t \ge 0}{\operatorname{sup}}\int w^2
	+t|\n w|^2 dx\notag\\
	&+\s(a)^{-2\A}\left(-\frac{a}{N}\int |\n w (x,0)|^2G(x,a)dx + N \A \int w^2(x,0)G(x,a)dx\right).\notag
\end{align}
 for all $0<a\le \frac{1}{4\lambda}$ and $w \in C_{0}^{\infty}(B_4 \times [0,\frac{1}{2\lambda}))$, where $\lambda=\A/\delta^2.$
 
 \medskip
 
\textbf{Proof of Step 3}: We now  replace $V(x,0)$ by $V(x,t).$ Using AM-GM inequality  we have
\begin{align*}
	|(\D(A(x,t) \n w)+w_t+V(x,0)w)|&=|(\D(A(x,t) \n w)+w_t+V(x,t)w)+(V(x,0)-V(x,t))w|\\
	&\le 2(\D(A(x,t) \n w)+w_t+V(x,t)w)^2+2(V(x,0)-V(x,t))^2w^2.
\end{align*}
Since $V(x,t)$ is $1/2$-H\"{o}lder continuous, $(V(x,0)-V(x,t))^2 \le [V]_{1/2}^2t.$  Thus we get
\begin{align}\label{v1}
 N \int \s_a^{1-2\A}(\D(A(x,t) \n w)+w_t+V(x,0)w)^2 G_a &\le  2N \int \s_a^{1-2\A}(\D(A(x,t) \n w)+w_t+V(x,t)w)^2 G_a\\&+2 N[V]_{1/2}^2 \int \s_a^{1-2\A}tw^2 G_a.\notag
\end{align}
We would be done if  we can absorb the last term in the right-hand side of \eqref{v1}  into  the left-hand side of \eqref{cari}. Using $\lambda t \le 1/2$  and $\s_a(t) \le t+a \le 1/\lambda$, we get
\begin{align}\label{kv1}
	2 N[V]_{1/2}^2 \int \s_a^{1-2\A}tw^2 G_a \le 	\frac{2 N}{\lambda^2}[V]_{1/2}^2 \int \s_a^{-2\A}w^2 G_a
\end{align}
Using \eqref{v1} and \eqref{kv1} in \eqref{cari} we obtain
\begin{align}\label{f2}
	&	\A^2 \int \s_a^{-2\A}w^2G_a +\A \int \s_a^{1-2\A}|\n w|^2G_a\\
	& \le N \int \s_a^{1-2\A}(\D(A(x,t) \n w)+w_t+V(x,t)w)^2 G_a\notag\\
	&+\frac{8 N}{\A^2}[V]_{1/2}^2 \int \s_a^{-2\A}w^2 G_a +N^{\A}\A^{2\A}\underset{t \ge 0}{\operatorname{sup}}\int w^2
	 +|\n w|^2 dx\notag\\
	&+\s(a)^{-2\A}\left(-\frac{a}{N}\int |\n w (x,0)|^2G(x,a)dx + N \A \int w^2(x,0)G(x,a)dx\right).\notag
\end{align}
Now note that the second term in the right-hand side of \eqref{f2} can be absorbed in the first term of left-hand side of \eqref{f2} provided $\frac{\A^2}{2} \ge \frac{8 N}{\alpha^2}[V]_{1/2}^2$ i.e. when $\A \ge 4N [V]_{1/2}^{1/2}.$ Hence if we choose $\A \ge N(||V||_1^{1/2}+ [V]_{1/2}^{1/2}+1)$ for $N$ large, the conclusion follows.

	\end{proof}
	In order to establish a quantitative space-like doubling property that implies \eqref{df}, we also need the following monotonicity in time result. The following lemma below is analogous to Lemma 5 in \cite{EFV}. 
\begin{lemma}\label{mon}
	Let $u$ be a solution to \eqref{meq} in $Q_4$.  Then there exists a universal constant $N$ such that the following inequality holds
	\begin{align}\label{moni}
	N e^{ ||V||_{\infty}^{1/2}}\int_{B_{2\rho}} u^2(x,t)dx \ge \int_{B_{\rho}} u^2(x,0)dx
	\end{align}
 for $t\le \frac{\rho^2}{2N\operatorname{log}(2N(1+||V||_{\infty}){\Theta_{\rho}})+ 5 N^2 (||V||_{\infty}^{1/2}+1)}$ and $0<\rho\le1$, where $$\Theta_{\rho}=\frac{\int_{Q_{4}} u^2( x, t)}{\rho^2 \int_{B_{\rho}}u^2( x,0)}.$$ 
\end{lemma}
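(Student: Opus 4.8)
The plan is to establish a differential inequality for a Gaussian‑weighted $L^{2}$ functional, following the scheme of \cite[Lemma 5]{EFV} but tracking quantitatively the dependence on $\|V\|_{\infty}$ and the error produced by truncating $u$.

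\emph{Reductions and localization.} First I would normalize: the parabolic rescaling $u(x,t)\mapsto u(\rho x,\rho^{2}t)$ replaces $\|V\|_{\infty}$ by $\rho^{2}\|V\|_{\infty}\le\|V\|_{\infty}$ and does not increase $\Theta_{\rho}$, while a linear change of the spatial variable $x\mapsto A(0,0)^{1/2}x$ reduces to $A(0,0)=\mathbb I_{n}$; hence it suffices to prove \eqref{moni} for $\rho=1$. Since $u$ solves \eqref{meq} only in $Q_{4}$, fix $\eta\in C_{0}^{\infty}(B_{3})$ with $\eta\equiv1$ on $B_{3/2}$ and set $w=\eta u$, so that
\[
w_{t}+\D(A\nabla w)+Vw=f,\qquad f:=2\langle A\nabla u,\nabla\eta\rangle+u\,\D(A\nabla\eta),
\]
with $f$ supported in $B_{3}\setminus B_{3/2}$; by the interior estimate of Lemma \ref{enl}, $\|f\|_{L^{\infty}}+\sup_{\tau}\big(\int_{B_{3}}u^{2}(\cdot,\tau)\,dx\big)^{1/2}\le N(1+\|V\|_{\infty})\|u\|_{L^{2}(Q_{4})}$.

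\emph{The Gaussian energy and its differential inequality.} For a parameter $a\in(0,1]$ I would study $h(\tau)=\int w^{2}(x,\tau)\,G(x,\tau+a)\,dx$. Since $G$ is the heat kernel, so that $\partial_{\tau}G(x,\tau+a)=\Delta_{x}G(x,\tau+a)$, and since \eqref{meq} carries the ``backward‑heat'' sign, differentiating in $\tau$ and integrating by parts gives
\begin{align*}
h'(\tau)=2\!\int\!\langle A\nabla w,\nabla w\rangle G_{a}-2\!\int\! Vw^{2}G_{a}+2\!\int\! fw\,G_{a}+2\!\int\! w\,\langle(A-\mathbb I_{n})\nabla w,\nabla_{x}G_{a}\rangle,
\end{align*}
where $G_{a}(x,\tau)=G(x,\tau+a)$. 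The last integral is the error coming from the Lipschitz perturbation of the principal part; using $A(0,0)=\mathbb I_{n}$ (so that $|A-\mathbb I_{n}|\le K(|x|+\sqrt\tau)$ is small on the bulk $\{|x|\lesssim\sqrt{\tau+a}\}$ of the Gaussian once $a$ is small), $|\nabla_{x}G_{a}|\le\frac{|x|}{2(\tau+a)}G_{a}$, and the Gaussian inequality of Lemma \ref{logb} with width $\tau+a$, it is absorbed partly into the favorable first term and partly into $N(1+\|V\|_{\infty})h(\tau)$. Treating $2\int fw\,G_{a}$ by Cauchy--Schwarz and using that $f$ lives where $G(x,\tau+a)\le(\tau+a)^{-n/2}e^{-c/(\tau+a)}$ for a universal $c>0$, one is led to a differential inequality of the form
\[
h'(\tau)\ \ge\ -N(1+\|V\|_{\infty})\,h(\tau)\ -\ N(1+\|V\|_{\infty})^{2}(\tau+a)^{-n/2}e^{-c/(\tau+a)}\|u\|_{L^{2}(Q_{4})}^{2}.
\]

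\emph{Integration and choice of parameters.} Integrating over $[0,t]$, then using $h(0)\ge a^{-n/2}e^{-1/(4a)}\int_{B_{1}}u^{2}(x,0)\,dx$ (as $w\equiv u$ on $B_{1}\subset B_{3/2}$) and, after splitting off the part of $h(t)$ outside $B_{2}$, $h(t)\le(t+a)^{-n/2}\int_{B_{2}}u^{2}(x,t)\,dx+(t+a)^{-n/2}e^{-1/(t+a)}\int_{B_{3}}u^{2}(x,t)\,dx$, one obtains
\[
\int_{B_{2}}u^{2}(x,t)\,dx\ \ge\ e^{-N(1+\|V\|_{\infty})t}\,e^{-1/(4a)}\!\int_{B_{1}}\!u^{2}(x,0)\,dx\ -\ N(1+t)(1+\|V\|_{\infty})^{2}e^{-c/(t+a)}\|u\|_{L^{2}(Q_{4})}^{2}.
\]
It then remains to choose $a$ and restrict $t$ so that the second, negative term is at most half of the first. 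Keeping $e^{-N(1+\|V\|_{\infty})t}e^{-1/(4a)}\simeq e^{-\|V\|_{\infty}^{1/2}}$ forces $a\simeq(\|V\|_{\infty}^{1/2}+1)^{-1}$ and $N(1+\|V\|_{\infty})t\lesssim\|V\|_{\infty}^{1/2}+1$ — this is the summand $5N^{2}(\|V\|_{\infty}^{1/2}+1)$ in the denominator of the bound on $t$, and is responsible for the loss $e^{\|V\|_{\infty}^{1/2}}$ — while making the tail error negligible against $\Theta_{1}$ requires $\frac{c}{t+a}\gtrsim\log\!\big((1+\|V\|_{\infty})\Theta_{1}\big)$, which produces the summand $2N\log(2N(1+\|V\|_{\infty})\Theta_{1})$. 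Combining these constraints gives the admissible range of $t$ in \eqref{moni}; feeding the choices back yields $\int_{B_{2}}u^{2}(x,t)\,dx\ge\frac1N e^{-\|V\|_{\infty}^{1/2}}\int_{B_{1}}u^{2}(x,0)\,dx$, and undoing the rescaling finishes the proof.

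\emph{Main obstacle.} I expect the real work to be in this last step: the parameter $a$ must be chosen simultaneously large enough that $e^{-1/(4a)}$ costs only a factor comparable to $e^{-\|V\|_{\infty}^{1/2}}$ and small enough that the Gaussian tail $e^{-c/(t+a)}$ of the truncation and source errors genuinely dominates the global quantity $\Theta_{1}$; reconciling this tension — together with keeping all constants universal while absorbing the Lipschitz corrections of the principal part through Lemma \ref{logb} — is precisely what dictates the form of the time interval appearing in \eqref{moni}, and is the part requiring the most care.
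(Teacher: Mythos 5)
Your overall scheme is viable, but it is a genuinely different route from the paper's, and the one step you flag as "the main obstacle" is not actually where the paper's difficulty lies --- while the step you pass over in one clause is where your version has a real gap. The paper does not use the translated standard Gaussian $G(x,\tau+a)$ at all in this lemma: it sets $H(t)=\int f^2(x,t)\,\mathcal G(x,t;y,0)\,dx$ with $\mathcal G$ the \emph{fundamental solution of the variable-coefficient operator} $\partial_t-\D(A\nabla\cdot)$ itself, with pole at $(y,0)$ for each $y\in B_1$. Then $\mathcal G_t=\D(A\nabla\mathcal G)$ exactly, so the identity $H'=2\int f(f_t+\D(A\nabla f))\mathcal G+2\int\langle A\nabla f,\nabla f\rangle\mathcal G$ produces \emph{no} error term from $A-\mathbb I_n$; the only inputs needed are Aronson/Friedman Gaussian upper bounds on $\mathcal G$ (to kill the cutoff error supported in $B_2\setminus B_{3/2}$) and the normalization $\int\mathcal G(x,t;y,0)\,dy=1$. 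Moreover, since the pole sits at time $0$, one has $H(0^+)=u^2(y,0)$ pointwise and integrates over $y\in B_1$, so there is no parameter $a$ and no $e^{-1/4a}$ loss: the entire factor $e^{||V||_{\infty}^{1/2}}$ in \eqref{moni} comes from the Gronwall factor $e^{N||V||_{\infty}t}$ together with $t\lesssim N^{-2}(||V||_{\infty}^{1/2}+1)^{-1}$. Your version pays for $e^{||V||_{\infty}^{1/2}}$ twice (Gronwall and $e^{-1/4a}$ with $a\simeq(||V||_{\infty}^{1/2}+1)^{-1}$), which still closes, but only if the constant in your choice of $a$ is tracked.

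The gap is in your absorption of $2\int w\langle(A-\mathbb I_n)\nabla w,\nabla_xG_a\rangle$. A direct Cauchy--Schwarz against the good term $2\int\langle A\nabla w,\nabla w\rangle G_a$ leaves you with $C\int\frac{|x|^2(|x|+\sqrt{\tau+a})^2}{(\tau+a)^2}w^2G_a$, whose worst piece is $\int\frac{|x|^4}{(\tau+a)^2}w^2G_a$. Lemma \ref{logb} does not control this as stated: after bounding $|x|^4\le 9|x|^2$ on the support of $w$ and applying Lemma \ref{logb} with width $\tau+a$, you get back $C\int|\nabla w|^2G_a+\frac{C}{\tau+a}h(\tau)$ with a \emph{universal, not small}, constant $C$ on the gradient term, which cannot be reabsorbed into $\frac{2}{\Lambda}\int|\nabla w|^2G_a$. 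To fix this you must first extract smallness by splitting $\{|x|\le\delta_0\}\cup\{|x|>\delta_0\}$ (exactly the mechanism behind Lemma \ref{logi}): on $\{|x|\le\delta_0\}$ the factor $|A-\mathbb I_n|^2\le K^2(\delta_0+\sqrt{\tau+a})^2$ supplies a coefficient $O(\delta_0^2)$ that makes the returned gradient term absorbable, while on $\{|x|>\delta_0\}$ the Gaussian tail $e^{-c\delta_0^2/(\tau+a)}$ turns that contribution into another error of the same type as your truncation term. Even then, the leftover is $\frac{C\delta_0^2}{\tau+a}h(\tau)$, which is of the form $N(1+||V||_{\infty})h(\tau)$ \emph{only because} $a\gtrsim(1+||V||_{\infty}^{1/2})^{-1}$; so your differential inequality is not established before the choice of $a$, and the logic of the final step must be reordered accordingly. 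None of this is fatal, but it is the substantive work of your approach, and it is precisely the work the paper avoids by conjugating with $\mathcal G$ instead of $G_a$.
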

\begin{proof}
The proof is although similar  to that in \cite{EFV}, we nevertheless provide the  details  in order to highlight the delicate dependence  on $||V||_{\infty}$  in \eqref{moni} above. 
	We consider $f(x,t)= u(x,t) \phi(x),$ where $\phi=1$ in $B_{3/2}$ and $\phi =0$ outside $B_2.$  We then define $$H(t)= \int f^2(x,t) \mathcal G(x,t;y,0) dx,$$ where $\mathcal G(x,t;y,s)$ is the fundamental solution in $\R^{n+1}$ of the operator $\partial_t - \operatorname{div}(A(x,t) \nabla)$, i.e. $\mathcal G_t- \D(A \n \mathcal G)=\delta_{(y,s)}.$  Over here, we would like to mention that without loss of generality, we may assume that $A$ is defined on whole of $\R^n \times \R$ and satisfies  the bounds in \eqref{assump} and moreover the associated fundamental solution $\mathcal G$ satisfies
	\begin{equation}
	\begin{cases}
	\int_{\R^n} \mathcal G(x,t;y, 0) dy=1\\
	\int_{\R^n} \mathcal G(x,t;y,0) \phi(y) dy \to \phi(x)\ \text{as $t \to 0$.}\end{cases}
	\end{equation}
	See  for instance \cite[Chapter 1]{Fr}.
	
	Then for $t>0$ we have
	\begin{align*}
		H'(t) &= \int 2ff_t \mathcal G +\int f^2 \mathcal G_t \\
		&=\int 2ff_t \mathcal G + \int f^2 \D(A \n \mathcal G)\\
		&=\int 2ff_t \mathcal G - \int\langle \n f^2,A \n \mathcal G \rangle \\
		&=\int 2ff_t \mathcal G + \int \D(A \n f^2)\mathcal G\\
		&=2\int f(f_t+ \D(A \n f))\mathcal G + \int \langle A \n f, \n f \rangle \mathcal G.
	\end{align*}
 From straightforward calculations we have $f_t=u_t \phi $ and 
$$ \D(A \n f) = \phi \D (A \n u) + 2 \langle A \n u, \n \phi \rangle + u \D(A \n \phi ).$$
Since $u$ is a solution of \eqref{meq}, we get $f_t + \D(A \n f)=- Vf +   u\D(A \n \phi ) +  2 \langle A \n u, \n \phi \rangle.$ 	
Hence we get \begin{align*}
	H'(t) \ge   - N||V||_{\infty} H(t) -  2N\int |f|(|u|+|\n u|)\chi_{B_2\setminus B_{3/2}}\mathcal G.
\end{align*}

	  Arguing as in \cite{EFV} we get
	 \begin{align*}
		H'(t) \ge - N||V||_{\infty} H(t) - \frac{N}{t^{n/2}}e^{-\frac{1}{4Nt}}\int (|u|^2+|\n u|^2).
	\end{align*}
Then using  the estimate in \eqref{eniq}, we  obtain from above
\begin{align*}
	H'(t) \ge - N||V||_{\infty} H(t)  -(1+||V||_{\infty})\frac{N}{t^{n/2}}e^{-\frac{1}{4Nt}}\int_{Q_4} |u|^2.
\end{align*}
Now for sufficiently small $t$, one has  $t^{n/2}e^{-\frac{1}{4Nt}} \le  e^{-\frac{1}{8Nt}}$ and hence we deduce the following inequality for all large enough $N$
\begin{align*}
	H'(t) \ge - N||V||_{\infty} H(t)  -(1+||V||_{\infty})Ne^{-\frac{1}{Nt}}\int_{Q_4} u^2.
\end{align*}
Integrating this inequality in $(0,t)$ and using $\underset{t \rightarrow 0}{\operatorname{lim}} H(t)=u^2(y,0)$ for $y \in B_1$ we get
\begin{align}\label{intp}
	e^{N||V||_{\infty} t} \int f^2(x,t) \mathcal G(x,t;y,0) \ge u^2(y,0) - e^{N||V||_{\infty} t} (1+||V||_{\infty})Nte^{-\frac{1}{Nt}}\int_{Q_4} u^2.
\end{align}
Now by integrating the above  inequality in \eqref{intp} in  the $y$-variable in $B_1$,  changing  the order of integration and then by  using $\int \mathcal G(x,t;y,0)dy=1$ we obtain
\begin{align}\label{monp}
e^{N||V||_{\infty} t}\int_{B_2} u^2(x,t) \ge	e^{N||V||_{\infty} t} \int_{B_2} f^2(x,t) \ge \int_{B_1}u^2(x,0) -e^{N ||V||_{\infty} t} (1+||V||_{\infty})Nte^{-\frac{1}{Nt}}\int_{Q_4} u^2.
\end{align}
We now choose a universal $N$ (which does not depend on $||V||_{\infty}$) such that $2N/C_E >1$ and $ N \operatorname{log}(2N/C_E)>1,$ where $C_E$ is from Lemma \ref{enl}.
Note that from \eqref{eniq}  we have $$1 \le C_E(1+||V||_{\infty})\frac{\int_{Q_4} u^2dX}{\int_{B_1}u^2(x,0)dx}=C_E(1+||V||_{\infty})\Theta.$$  Hence we get $N\operatorname{log}(2N(1+||V||_{\infty})\Theta)= N\operatorname{log}(2N/C_E)+N\operatorname{log}(C_E(1+||V||_{\infty})\Theta)>1.$ If we now take $t \le \frac{1}{2N\operatorname{log}(2N(1+||V||_{\infty})\Theta) + 5 N^2 (||V||_{\infty}^{1/2}+1)},$ then 
\begin{align}\label{mon2}
	e^{N||V||_{\infty} t} (1+||V||_{\infty})Ne^{-\frac{1}{Nt}}\int_{Q_4} u^2 \le \frac{1}{2} \int_{B_1}u^2(x,0).
\end{align}
 Using \eqref{mon2} in \eqref{monp}, we obtain  for all  $t \le \frac{1}{2N\operatorname{log}(2N(1+||V||_{\infty})\Theta) + 5 N^2 (||V||_{\infty}^{1/2}+1)},$ that the following inequality holds
 \begin{align*}
	N e^{ ||V||_{\infty}^{1/2}}\int_{B_2} u^2(x,t) \ge \int_{B_1}u^2(x,0).
\end{align*}
Thus, for all $t \le\frac{1}{2N\operatorname{log}(2N(1+||V||_{\infty})\Theta) + 5 N^2 (||V||_{\infty}^{1/2}+1)}$ we have 
\begin{align*}
N e^{ ||V||_{\infty}^{1/2}}\int_{B_2} u^2(x,t) \ge \int_{B_1}u^2(x,0),
\end{align*}
where $\Theta=\frac{\int_{Q_4} u^2dX}{\int_{B_1}u^2(x,0)dx}.$
Now for the rescaled solution $\tilde{u}(x,t):=u(\rho x, \rho^2 t),$ $\rho<1$, we get
\begin{align*}
N e^{ ||V||_{\infty}^{1/2}}\int_{B_{2\rho}} u^2(x,\rho^2 t) \ge \int_{B_{\rho}}u^2(x,0)
\end{align*}
where $t \le \frac{1}{2N\operatorname{log}(2N(1+||V||_{\infty})\tilde{\Theta_{\rho}})+ 5 N^2 (||V||_{\infty}^{1/2}+1)} $ and $$\tilde{\Theta_{\rho}}=\frac{\int_{Q_4} u^2(\rho x, \rho^2 t)dX}{\int_{B_1}u^2(\rho x,0)dx}.$$
After change of variables and by using  $\int_{Q_{4\rho}} u^2( x, t) \le \int_{Q_4} u^2(x, t)$ we get 
$$\tilde{\Theta_{\rho}}\le \frac{\int_{Q_{4}} u^2( x, t)}{\rho^2 \int_{B_{\rho}}u^2( x,0)}:=\Theta_{\rho}.$$ 
Thus, for all $t \le \frac{1}{2N\operatorname{log}(2N(1+||V||_{\infty}){\Theta_{\rho}})+ 5 N^2 (||V||_{\infty}^{1/2}+1)} $ we have
\begin{align}\label{inter1}
N e^{ ||V||_{\infty}^{1/2}} \int_{B_{2\rho}} u^2(x,\rho^2 t) \ge \int_{B_{\rho}}u^2(x,0).
\end{align}
From \eqref{inter1} it follows that 
 for all $t \le\frac{\rho^2}{2N\operatorname{log}(2N(1+||V||_{\infty}){\Theta_{\rho}})+ 5 N^2 (||V||_{\infty}^{1/2}+1)}$ we have
\begin{align*}
N e^{||V||_{\infty}^{1/2}}\int_{B_{2\rho}} u^2(x, t) \ge \int_{B_{\rho}}u^2(x,0),
\end{align*}
which completes the proof of the lemma.
	\end{proof}
	
	With the Carleman estimate in Lemma \ref{carleman}   and the monotonicity result  Lemma \ref{mon} in hand, we now proceed with the proof of Theorem \ref{main} where we adapt some ideas from \cite{EFV} in order to obtain our desired quantitative uniqueness estimate \eqref{df}.
\begin{proof}[Proof of Theorem \ref{main}]  Without loss of generality, we assume that $A(0,0)= \mathbb I_n$. The proof is divided into three steps.   \\
	 \textbf{Step 1 }: We first show that there exists a universal constant $N$ such that for all $r<1/2$
	 \begin{align}
	 	\int_{B_{2r}} u^2(x,0)dx \le M \int_{B_r} u^2(x,0)dx,
	 \end{align}
  where $M=\exp(N^4 \operatorname{log}(2N(1+||V||_{\infty})\Theta_{\rho})+N^4(||V||_1^{1/2}+[V]_{1/2}^{1/2}+1))$ and where $\Theta_{\rho}$ is as in Lemma \ref{mon}.\\
  \textit{Proof of Step 1}:
	For a fixed $\alpha$ large, with $\lambda=\frac{\alpha}{\delta^2}$, consider $w(x,t)=u(x,t)\psi(x) \phi(t),$ where $\psi \in C^{\infty}_{0}(B_4)$ such that $\psi \equiv 1$ in $B_3$,  $\psi =0$ outside $B_{7/2}$ and   where $\phi \equiv 1$ in $0\le t \le 1/4\lambda$ and $\phi =0$ for $t \ge 1/2\lambda$. Then using \eqref{meq} we have
	$$ \D(A\n w)+ w_t+Vw = \D(A \n \psi)u \phi + 2 \langle A \n \psi, \n u \rangle \phi  + u \psi \phi_t.$$
	It thus follows	\begin{align}\label{1st}
		(\D(A\n w)+ w_t+Vw)^2 \le C (u^2 +|\n u|^2)\chi_{B_3\setminus B_{2}}(x) +C\lambda^2u^2 \chi_{(0,1/2\lambda)\setminus (0,1/4\lambda)}(t). 
	\end{align}
	Using the Carleman inequality \eqref{carl} we get
	\begin{align}
		\A^2 \int \s_a^{-2\A}w^2G_a +\A \int \s_a^{1-2\A}|\n w|^2G_a \le N \int \s_a^{1-2\A}(\D(A \n w)+w_t+Vw)^2 G_a +N^{2\A}\A^{2\A}\underset{t \ge 0}{\operatorname{sup}}\int w^2 +t|\n w|^2 dx\\
		+\s(a)^{-2\A}\left(-\frac{a}{N}\int |\n w (x,0)|^2G(x,a)dx + N \A \int w^2(x,0)G(x,a)dx\right),\notag
	\end{align}
which  in view of \eqref{1st} implies the following estimate
\begin{align}\label{est2}
	&\A^2 \int \s_a^{-2\A}w^2G_a +\A \int \s_a^{1-2\A}|\n w|^2G_a\\
	 &\le N \int \s_a^{1-2\A}(u^2 +|\n u|^2)\chi_{B_3\setminus B_{2}}(x) G_a +N \lambda^2\int \s_a^{1-2\A}u^2 \chi_{(0,1/2\lambda)\setminus (0,1/4\lambda)}(t)G_a\notag\\ 
	 &+N^{\A}\A^{2\A}\underset{t \ge 0}{\operatorname{sup}}\int w^2 +t|\n w|^2 dx
	+\s(a)^{-2\A}\left(-\frac{a}{N}\int |\n w (x,0)|^2G(x,a)dx + N \A \int w^2(x,0)G(x,a)dx\right).\notag
\end{align}
We now make the following claim.

\medskip

	\emph{Claim}:  $ \s_a^{1-2\A}G_a \le N^{2\A+\frac{n}{2}} \lambda^{2\A+\frac{n}{2}}$ in the region $B_3 \times [0,1/2\lambda] \setminus B_2 \times [0,1/4\lambda].$\\

	Proof of the claim: We will divide the proof in two cases.\\
	Case(i): When $t> 1/4\lambda.$\\
	 From Lemma \ref{sig}, $\s_a \ge \frac{t+a}{N},$ which gives $\s_a^{1-2\A} \le N^{2\A-1}(t+a)^{1-2\A}.$ Thus we get
	  \begin{align}\label{c1}
	 	\s_a^{1-2\A}G_a \le  N^{2\A-1}(t+a)^{1-2\A} (t+a)^{-n/2}e^{-|x|^2/4(t+a)}.
 	\end{align}
	Since we have $t+a> 1/\lambda,$ which implies $(t+a)^{1-2\A-n/2} \le \lambda^{-1+2\A+n/2}.$ Also  since $(t+a)>0$  and thus  $e^{-|x|^2/4(t+a)} \le 1.$ Hence we obtain $$\s_a^{1-2\A}G_a \le  N^{2\A-1}\A^{-1+2\A+n/2},$$ which completes the proof of Case(i).\\
	Case(ii): When  $t \le  1/4\lambda.$\\ 
	Since $(x,t) \in B_3 \times [0,1/2\lambda] \setminus B_2 \times [0,1/4\lambda],$ therefore in this case we  must have $|x|\ge 2.$ Now we use \eqref{c1} to obtain
	\begin{align*}
		\s_a^{1-2\A}G_a \le  N^{2\A-1}(t+a)^{1-2\A} (t+a)^{-n/2}e^{-9/4(t+a)}
	\end{align*}
From the properties of exponential function, for any integer $k$ we have $e^{-x} \le \frac{k!}{x^k} \le  \frac{k^k}{x^k}.$ Thus for real number $2\A+n/2$ we get $e^{-1/(t+a)} \le (2\A+n/2)^{(2\A+n/2)} (t+a)^{2\A+n/2-1}. $ Hence we get $\s_a^{1-2\A}G_a \le N^{2\A-1}(2\A+n/2)^{(2\A+n/2)}.$ This proves the claim.\\
Using the claim proved above, we find that  \eqref{est2} becomes 
\begin{align}\label{est3}
	&\A^2 \int \s_a^{-2\A}w^2G_a +\A \int \s_a^{1-2\A}|\n w|^2G_a
		\le N^{2\A+n/2}\A^{2\A+n/2} \int_{0}^{2/\A}\int_{B_3} (u^2 +|\n u|^2) \\
		&+N^{2\A}\A^{2\A}\underset{t \ge 0}{\operatorname{sup}}\int w^2 +t|\n w|^2 dx
		+\s(a)^{-2\A}\left(-\frac{a}{N}\int |\n w (x,0)|^2G(x,a)dx + N \A \int w^2(x,0)G(x,a)dx\right).\notag
\end{align}
Now using the  estimate  in  \eqref{eniq} we then obtain
\begin{align}\label{est4}
	&\A^2 \int \s_a^{-2\A}w^2G_a +\A \int \s_a^{1-2\A}|\n w|^2G_a
	\le 2C_E N^{2\A+n/2}\lambda^{2\A+n/2}(1+||V||_{\infty})\int_{Q_4}u^2(x,t)dX\\
	&+\s(a)^{-2\A}\left(-\frac{a}{N}\int |\n w (x,0)|^2G(x,a)dx + N \A \int w^2(x,0)G(x,a)dx\right).\notag
\end{align}
Let $\rho \in (0,1)$ to be fixed later.  Now since $\phi(t)=1$ for $t \le 1/4\lambda$ and $\psi=1$ in $B_{2},$  we observe that the first term on the left hand side of \eqref{est4}  can be minorized as follows
\begin{align*}
	\A^2 \int \s_a^{-2\A}w^2G_a &\ge \A^2 \int_0^{1/4\lambda}\int_{B_3} \s_a^{-2\A}w^2G_a\\
	& \ge \A^2 \int_0^{\rho^2/4\lambda}\int_{B_{2\rho}} \s_a^{-2\A}u^2(t+a)^{-n/2}e^{-|x|^2/4(t+a)}\\
	&\ge \A^2 \int_0^{\rho^2/4\lambda}\int_{B_{2\rho}} (t+a)^{-2\A-n/2}u^2e^{-\rho^2/(t+a)},
\end{align*}
where the second inequality is a consequence of the fact that  $0<\rho<1$ and in the last inequality, we have used $\s_a(t) \le t+a.$ We now assume  that \begin{equation}\label{ch1} \A > N( \operatorname{log}(2N(1+||V||_{\infty})\Theta_{\rho})+ ||V||_{1}^{1/2}+[V]_{1/2}^{1/2}+1) \end{equation} and $0<t \le \frac{\rho^2}{4\lambda}\le \frac{\rho^2}{2\A}$ in order to apply Lemma \ref{mon} and consequently we obtain
\begin{align*}
		\A^2 \int \s_a^{-2\A}w^2G_a &\ge   \frac{\A^2}{N} \int_0^{\rho^2/4\lambda}(t+a)^{-2\A-n/2}e^{-\rho^2/(t+a)}dt\  e^{- ||V||_{\infty}^{1/2}} \int_{B_{ \rho}} u^2(x,0)dx\notag\\
		& \geq  e^{-\alpha} \frac{\A^2}{N} \int_a^{a+\rho^2/4\lambda}t^{-2\A-n/2}e^{-\rho^2/t}dt\int_{B_{ \rho}} u^2(x,0)dx\notag\\
		&\ge e^{-\alpha}  \frac{\A^2}{N} \int_{\rho^2/8\lambda}^{\rho^2/4\lambda}t^{-2\A-n/2}e^{-\rho^2/t}dt\int_{B_{ \rho}} u^2(x,0)dx,
\end{align*}
where in the second inequality above, we used that $e^{- ||V||_{\infty}^{1/2}} \geq e^{-\alpha}$ which can be ensured by taking $N>1$  in \eqref{ch1} above  and  the last inequality uses $0<a \le \frac{\rho^2}{8\lambda}$  which in turn implies $(\rho^2/8\lambda,\rho^2/4\lambda) \subset (a,a+\rho^2/4\lambda).$ Now we use the fact that $t^{-2\A-n/2}$ is a decreasing function of $t$ and $e^{-\rho^2/t}$ is an increasing function of $t$ to deduce the following inequality from the above estimate 
\begin{align}
		\A^2 \int \s_a^{-2\A}w^2G_a \ge \frac{\A^2\rho^2}{8\lambda N} \left(\frac{\rho^2}{4\lambda}\right)^{-2\A-\frac{n}{2}}e^{-10\lambda}\int_{B_{ \rho}} u^2(x,0)dx\ge \frac{\delta^2 4^{2\A+\frac{n}{2}}\lambda^{2\A+\frac{n}{2}+1}}{8 N}(e^{5/\delta^2}\rho^2)^{-2\A}\rho^{2-n}\int_{B_{ \rho}} u^2(x,0)dx,
\end{align}
where we have used  that $\lambda =\A/\delta^2$ and $e^{-\alpha} \geq e^{-2 \lambda}$.
We now recall the definition of $\Theta_{\rho}$ to find
\begin{align}\label{les}
	\A^2 \int \s_a^{-2\A}w^2G_a \ge \frac{\delta^2 4^{2\A+\frac{n}{2}}\lambda^{2\A+\frac{n}{2}+1}}{8 N\Theta_{\rho}}(e^{5/\delta^2}\rho^2)^{-2\A}\rho^{-n} \int_{Q_4} u^2(x,t)dX.
\end{align}
Now using \eqref{les} in \eqref{est4} we get
\begin{align}\label{g1}
 \frac{\delta^2 4^{2\A+\frac{n}{2}}\lambda^{2\A+\frac{n}{2}+1}}{8 N\Theta_{\rho}}(e^{5/\delta^2}\rho^2)^{-2\A}\rho^{-n} \int_{Q_4} u^2(x,t)dX  \le  2C_E N^{2\A+n/2}\lambda^{2\A+n/2}(1+||V||_{\infty})\int_{Q_4}u^2(x,t)dX\\
+\s(a)^{-2\A}\left(-\frac{a}{N}\int |\n w (x,0)|^2G(x,a)dx + N \A \int w^2(x,0)G(x,a)dx\right)\notag.
\end{align}
Now in order  to absorb the first term on  the right hand side of \eqref{g1} onto the  left hand side of \eqref{g1}, we choose $\rho$ such that
\begin{align}\label{ro1}
	\frac{1}{2}\frac{\delta^2 4^{2\A+\frac{n}{2}}\lambda^{2\A+\frac{n}{2}+1}}{8 N\Theta_{\rho}}(e^{5/\delta^2}\rho^2)^{-2\A}\rho^{-n}\ge 2C_E N^{2\A+n/2}\lambda^{2\A+n/2}(1+||V||_{\infty})
\end{align}
Since  $1+||V||_{\infty} \le \A^2$,  thus \eqref{ro1}  is ensured provided  for a possibly larger $N$ the following holds
\begin{align}\label{ro2}
		\frac{1}{2}\frac{\delta^2 4^{2\A+\frac{n}{2}}\lambda^{2\A+\frac{n}{2}+1}}{8 N\Theta_{\rho}}(e^{5/\delta^2}\rho^2)^{-2\A}\rho^{-n}\ge 2C_E N^{2\A+n/2}\lambda^{2\A+n/2+2}.\end{align}

\eqref{ro2} in particular will follow in case the following inequality holds for a larger $N$ \begin{align}\label{ro3}
N^{-2\A}(e^{5/\delta^2}\rho^2)^{-2\A} \ge 32C_EN^{n/2+1} \Theta_{\rho}.
\end{align} 
Since $\A \ge N \operatorname{log}(2N(1+||V||_{\infty})\Theta_{\rho})+N(||V||_1^{1/2}+[V]_{1/2}^{1/2}+1),$ we have $e^{\A /N} \ge 2N \Theta_{\rho}.$ Therefore \eqref{ro3} will hold if $$(Ne^{5/\delta^2}\rho^2)^{-2\A} \ge  16C_EN^{n/2}e^{\A/N}.$$ The above inequality can be rewritten as $$(e^{1/2N}Ne^{5/\delta^2}\rho^2)^{-2\A} \ge  16C_EN^{n/2}.$$ We now choose  $\rho$ such that \begin{equation}\label{cho}e^{1/2N}Ne^{5/\delta^2}\rho^2 \leq \frac{1}{8}\end{equation} and then we find that \eqref{ro1} follows. Using \eqref{ro1} in \eqref{g1} we obtain
\begin{align}
	\frac{a}{N}\int |\n w (x,0)|^2G(x,a)dx \le N \A \int w^2(x,0)G(x,a)dx.
\end{align}
In particular, by letting $\A_0=  N \operatorname{log}(2N(1+||V||_{\infty})\Theta_{\rho})+N(||V||_1^{1/2}+[V]_{1/2}^{1/2}+1)$ we deduce the following inequality
\begin{align}
		2a\int |\n w (x,0)|^2G(x,a)dx + \frac{n}{2} \int w^2(x,0)G(x,a)dx\le N^3\A_0 \int w^2(x,0)G(x,a)dx.
\end{align}
Now we use Lemma \ref{do} to obtain for all $r<1/2,$
\begin{align}\label{doi}
	\int_{B_{2r}} u^2(x,0)dx \le M \int_{B_r} u^2(x,0)dx,
\end{align}
where $M=\exp(N^4 \operatorname{log}(2N(1+||V||_{\infty})\Theta_{\rho})+N^4(||V||_1^{1/2}+[V]_{1/2}^{1/2}+1)).$\\
\textbf{Step 2}: We now show that there exists a universal constant $N$ such that for all $r<1/2,$
\begin{align}
	\int_{B_{2r}} u^2(x,0)dx \le M \int_{B_r} u^2(x,0)dx,
\end{align}  where $M= \exp({N \operatorname{log}(N\Theta)+N(||V||_1^{1/2}+[V]_{1/2}^{1/2}+1)})$ and $\Theta=N\int_{Q_4}u^2(x,t)dX\big/ \int_{B_1}u^2(x,0)dx.$\\

Note that in this step we want to replace $\Theta_{\rho}$ by $\Theta.$

\medskip

\textit{Proof of Step 2}:  For given $r\le \rho$, we can choose $k$ such that $$\rho \le 2^k r <2\rho.$$ Then using \eqref{doi} we have
\begin{align}
	\int_{B_{\rho}}u^2(x,0)dx \le \int_{B_{2^k r}}u^2(x,0)dx \le M^k\int_{B_r}u^2(x,0)dx \le M^{\operatorname{log}_2(2\rho/r)}\int_{B_r}u^2(x,0)dx.
\end{align}
We write $M=\exp(M_1+ N\operatorname{log}(N\Theta_{\rho})),$ where
\begin{align}\label{m1}
	M_1=N^4\operatorname{log}(2N(1+||V||_{\infty}))+N(||V||_1^{1/2}+[V]_{1/2}^{1/2}+1).
\end{align} Then we find 
\begin{align}
		\int_{B_{\rho}}u^2(x,0)dx \le e^{M_1\operatorname{log}_2(2\rho/r)}(N\Theta_{\rho})^{N\operatorname{log}_2(2\rho/r)} \int_{B_r}u^2(x,0)dx.
\end{align}
On putting the value of $\Theta_{\rho}$ we get 
\begin{align}\label{3sb}
		\int_{B_{\rho}}u^2(x,0)dx \le e^{M_1\operatorname{log}_2(2\rho/r)}\left(N\frac{\int_{Q_{4}} u^2( x, t)}{\rho^2 \int_{B_{\rho}}u^2( x,0)}\right)^{N\operatorname{log}_2(2\rho/r)} \int_{B_r}u^2(x,0)dx.
\end{align}
Since $\rho$ is a universal constant and less than 1 therefore $1/\rho^2$ can be absorbed into $N$ (by possibly taking  a larger $N$). Now we rearrange the terms in \eqref{3sb} to obtain
\begin{align}\label{3s}
		\int_{B_{\rho}}u^2(x,0)dx \le e^{M_1\frac{\operatorname{log}_2(2\rho/r)}{1+N\operatorname{log}_2(2\rho/r)}} \left(\int_{B_r}u^2(x,0)dx\right)^{\frac{1}{1+N\operatorname{log}_2(2\rho/r)}}\left(N\int_{Q_{4}} u^2( x, t)\right)^{\frac{N\operatorname{log}_2(2\rho/r)}{1+N\operatorname{log}_2(2\rho/r)}}.
\end{align}
In particular, if we take $r=\rho/2$, \eqref{3s} becomes 
\begin{align}\label{s1r}
	\int_{B_{\rho}}u^2(x,0)dx \le e^{M_1(1-\beta)} \left(\int_{B_{\rho/2}}u^2(x,0)dx\right)^{\beta}\left(N\int_{Q_{4}} u^2( x, t)\right)^{1-\beta},
\end{align}
where $\beta=\frac{1}{1+N\operatorname{log}_2 4}.$ Since the equation is translation invariant we can take translation of $u$ in space variable to obtain 
\begin{align}\label{3s1}
	\int_{B_{\rho}(y)}u^2(x,0)dx \le e^{M_1(1-\beta)} \left(\int_{B_{\rho/2}(y)}u^2(x,0)dx\right)^{\beta}\left(N\int_{Q_{4}} u^2( x, t)\right)^{1-\beta},
\end{align}
for all $y \in B_1.$ Note that $B_{3\rho/2} \subset \underset{y \in B_{\rho/2}}{\cup}B_{\rho}(y).$ From the compactness of $\overline{B_{5\rho/4}},$ there exist $y_1,y_2,...,y_{n_1} \in B_{\rho/2}$ where $n_1$ depends only on $n$  such that $B_{5\rho/4} \subset \underset{i}{\cup}B_{\rho}(y_i).$ Now we use \eqref{3s1} and sum over $i$ to find
\begin{align}
	\sum_{i=1}^{n_1}	\int_{B_{\rho}(y_i)}u^2(x,0)dx \le e^{M_1(1-\beta)}	\sum_{i=1}^{n_1} \left(\int_{B_{\rho/2}(y_i)}u^2(x,0)dx\right)^{\beta}\left(N\int_{Q_{4}} u^2( x, t)\right)^{1-\beta}.
\end{align}
Now in the left-hand side we use $B_{5\rho/4} \subset \underset{i}{\cup}B_{\rho}(y_i)$ and using $B_{\rho/2}(y_i) \subset B_{\rho}$ in the right-hand side to obtain
\begin{align}\label{3s2}
	\int_{B_{5\rho/4}}u^2(x,0)dx \le n_1e^{M_1(1-\beta)}	 \left(\int_{B_{\rho}}u^2(x,0)dx\right)^{\beta}\left(N\int_{Q_{4}} u^2( x, t)\right)^{1-\beta}.
\end{align}
We now use \eqref{s1r} in \eqref{3s2} to obtain 
\begin{align}\label{cov1}
	\int_{B_{5\rho/4}}u^2(x,0)dx &\le n_1e^{M_1(1-\beta)+M_1\beta(1-\beta)}	 \left(\int_{B_{\rho/2}}u^2(x,0)dx\right)^{\beta^2}\left(N\int_{Q_{4}} u^2( x, t)\right)^{1-\beta+\beta(1-\beta)}\\
	&=n_1e^{M_1(1-\beta^2)}	 \left(\int_{B_{\rho/2}}u^2(x,0)dx\right)^{\beta^2}\left(N\int_{Q_{4}} u^2( x, t)\right)^{1-\beta^2}. \notag
\end{align}
Proceeding in the same fashion, we can  get a similar inequality as in \eqref{cov1}  where on the left hand side,  the integral over $B_{5\rho/4}$  can replaced by integral over  $B_{7\rho/4}$ and so on.  Therefore since $\rho$ is universal ( in view of our choice in \eqref{cho}),  we can finally assert that there exists a  constant $N$ and $\tilde{\beta}\in (0,1)$ such that

\begin{align}
	\int_{B_1}u^2(x,0)dx &\le Ne^{M_1(1-\tilde{\beta})}	 \left(\int_{B_{\rho/2}}u^2(x,0)dx\right)^{\tilde{\beta}}\left(N\int_{Q_{4}} u^2( x, t)\right)^{1-\tilde{\beta}}.
\end{align}
This in particular implies
\begin{align}
	\int_{B_1}u^2(x,0)dx &\le Ne^{M_1(1-\tilde{\beta})}	 \left(\int_{B_{\rho}}u^2(x,0)dx\right)^{\tilde{\beta}}\left(N\int_{Q_{4}} u^2( x, t)\right)^{1-\tilde{\beta}}.
\end{align}
This can be rewritten as \begin{align}
	\int_{B_1}u^2(x,0)dx &\le Ne^{M_1(1-\tilde{\beta})}	 \left(\int_{B_{\rho}}u^2(x,0)dx\right)^{\tilde{\beta}}\left(N\int_{Q_{4}} u^2( x, t)\right)^{-\tilde{\beta}}\left(N\int_{Q_{4}} u^2( x, t)\right).
\end{align}
This gives $$\Theta_{\rho}^{\tilde{\beta}} \le  Ne^{M_1(1-\tilde{\beta})}\rho^{-2\tilde{\beta}}	\frac{N\int_{Q_{4}} u^2( x, t)}{\int_{B_{1}}u^2(x,0)dx}.$$ Hence using \eqref{doi}, we find
\begin{align}\label{dub}
	\int_{B_{2r}} u^2(x,0)dx \le M \int_{B_r} u^2(x,0)dx,
\end{align}
where $M=\exp(N^4 \operatorname{log}(2N(1+||V||_{\infty})N^2\rho^{-2}e^{M_1(1-\tilde{\beta})/\tilde{\beta}}\Theta^{1/{\tilde{\beta}}})+N(||V||_1^{1/2}+[V]_{1/2}^{1/2}+1)).$\\
Now  $M$ can be rewritten as $$M = \exp(N^4 \operatorname{log}(2N^3\rho^{-2}\Theta^{1/{\tilde{\beta}}})+\operatorname{log}(1+||V||_{\infty})+{M_1(1-\tilde{\beta})/\tilde{\beta}}+N(||V||_1^{1/2}+[V]_{1/2}^{1/2}+1)).$$ After recalling the value of $M_1$ from \eqref{m1} and using $\operatorname{log}(1+b) \le N b^{1/2}$ we get for a large enough $N$, $M$ in \eqref{dub} can be upper bounded in the following way
$$M\le \exp({N \operatorname{log}(N\Theta)+N(||V||_1^{1/2}+[V]_{1/2}^{1/2}+1)}).$$
\textbf{Step 3}: (Conclusion) We finally show that there exists a universal constant $N$ such that for all $r \le 1/2$ we have 
\begin{align}
	r^{\mathcal{C}} \le \int_{B_r} u^2(x,0)dx,
\end{align}
where $\mathcal{C}=\frac{1}{\int_{B_1} u^2(x,0)dx}+ N\operatorname{log}(N\Theta)+N(||V||_1^{1/2}+[V]_{1/2}^{1/2}+1)$ and  $\Theta$ is as in Step 2.\\
\textit{Proof of Step 3}: For $r<1/2$, choose $k\ge2$ such that $2^{-k} \le r \le 2^{-k+1}$ then using \eqref{dub} $k$-times 
\begin{align}
	\int_{B_1} u^2(x,0)dx \le M^k \int_{B_r} u^2(x,0)dx.
	\end{align}
Now $ r \le 2^{-k+1}$ gives $k \le \operatorname{log}_2(2/r).$ Hence $M^k \le M^{\operatorname{log}_2(2/r)}=\left(\frac{2}{r}\right)^{\frac{ \operatorname{ln} M}{\operatorname{ln}2}}.$ Thus we obtain 
\begin{align}\label{q1}
	\left(\frac{r}{2}\right)^{\frac{ \operatorname{ln} M}{\operatorname{ln}2}} \int_{B_1} u^2(x,0)dx \le \int_{B_r} u^2(x,0)dx.
\end{align}
Since for any $b$ we have $b \le 2^b,$ \eqref{q1} can be rewritten as
\begin{align}
	r^{\frac{ \operatorname{ln} M}{\operatorname{ln}2}} \le 2^{\left( \int_{B_1} u^2(x,0)dx \right)^{-1}}2^{\frac{ \operatorname{ln} M}{\operatorname{ln}2}}\int_{B_r} u^2(x,0)dx.
\end{align}
Multiply both side of \eqref{q1} by $r^{ \left(\int_{B_1} u^2(x,0)dx \right)^{-1}+\frac{ \operatorname{ln} M}{\operatorname{ln}2}}$ to obtain
\begin{align}
	r^{\left( \int_{B_1} u^2(x,0)dx \right)^{-1}+2\frac{ \operatorname{ln} M}{\operatorname{ln}2}} \le (2r)^{\left( \int_{B_1} u^2(x,0)dx \right)^{-1}+\frac{ \operatorname{ln} M}{\operatorname{ln}2}}\int_{B_r} u^2(x,0)dx.
\end{align}
Since $r\le 1/2$ therefore $(2r)^{\left( \int_{B_1} u^2(x,0)dx \right)^{-1}+\frac{ \operatorname{ln} M}{\operatorname{ln}2}} \le 1.$ Hence we get
\begin{align}
	r^{\mathcal{C}} \le \int_{B_r} u^2(x,0)dx,
\end{align}
where $\mathcal{C}=\frac{1}{\int_{B_1} u^2(x,0)dx}+ N\operatorname{log}(N\Theta)+N(||V||_1^{1/2}+[V]_{1/2}^{1/2}+1).$ This completes the proof of Theorem \ref{main}.
\end{proof}

\end{document}